\documentclass[11pt, reqno]{amsart}

\usepackage{amsmath}
\usepackage{amssymb}
\usepackage{amsthm}
\usepackage{mathrsfs}
\usepackage{mathtools}
\usepackage{bm}
\usepackage{comment}
\usepackage{color}
\usepackage{listings}

\usepackage[dvipdfmx, hypertexnames=false, hidelinks]{hyperref}
\usepackage{cleveref}
\usepackage{autonum}

%%%%    This command is for using hyperref under draft
\hypersetup{final}

\allowdisplaybreaks[3]
\setlength{\textheight}{220mm} \setlength{\textwidth}{155mm}
\setlength{\oddsidemargin}{1.25mm}
\setlength{\evensidemargin}{1.25mm} \setlength{\topmargin}{0mm}

\theoremstyle{plain} %plain, break, marginbreak, change, changebreak, margin, remark
\newtheorem{theorem}{Theorem}
\newtheorem{lemma}{Lemma}[section]
\newtheorem{corollary}{Corollary}

\newtheorem{proposition}{Proposition}

\newtheorem*{conjecture*}{Conjecture}
\newtheorem*{theorem*}{Theorem}
\newtheorem*{question*}{Question}

\theoremstyle{plain}

\theoremstyle{remark}

\theoremstyle{definition}
\newtheorem*{assumption*}{Assumption}

\newtheorem*{notations*}{Notations}
\newtheorem*{acknowledgment*}{Acknowledgments}

%\numberwithin{remark}{section}
\numberwithin{equation}{section}

\crefname{section}{Section}{Sections}

\crefname{theorem}{Theorem}{Theorems}
\crefname{corollary}{Corollary}{Corollaries}
\crefname{lemma}{Lemma}{Lemmas}
\crefname{proposition}{Proposition}{Propositions}
\crefname{claim}{Claim}{Claims}
\crefname{definition}{Definition}{Definitions}
\crefname{notation}{Notation}{Notations}
\crefname{problem}{Problem}{Problems}
\crefname{note}{Note}{Notes}
\crefname{remark}{Remark}{Remarks}
\crefname{example}{Example}{Examples}
\crefname{equation}{}{}

\crefname{enumi}{}{}
\crefname{enumii}{}{}
\crefname{enumiii}{}{}

\newcommand\swapcommand[2]{%
\let\swaptemp#1
\let#1#2
\let#2\swaptemp
}

\let\sl\l
\renewcommand\l{%
		\leavevmode
	\ifmmode
	\left
	\else
		\sl
	\fi
}

\let\sL\L
\renewcommand\L{%
		\leavevmode
	\ifmmode
	\mathscr{L}
	\else
		\sL
	\fi
}

\swapcommand{\SS}{\S}
\renewcommand{\S}{\mathscr{S}}

\newcommand\set[2]{%
	\left\{ #1:  #2 \right\}
}

\newcommand\EXP[1]{
	\mathbb{E}\left[ #1 \right]
}

\newcommand\GS[1]{
	\lfloor #1 \rfloor
}

\newcommand{\CC}{\mathbb{C}}
\newcommand{\RR}{\mathbb{R}}

\newcommand{\ZZ}{\mathbb{Z}}

\newcommand{\PP}{\mathbb{P}}
\newcommand{\e}{\varepsilon}
\newcommand{\s}{\sigma}

\newcommand{\lam}{\lambda}
\newcommand{\Lam}{\Lambda}

\newcommand{\qquadt}{\qquad\qquad\qquad}
\newcommand{\qquadf}{\qquad\qquad\qquad\qquad}

\newcommand{\Sc}{\mathcal{S}}
\newcommand{\ol}{\overline}

\newcommand{\I}{\mathcal{I}}
\newcommand{\ceq}{\coloneqq}
\newcommand{\eqc}{\eqqcolon}

\newcommand{\meas}{\operatorname{meas}}

\renewcommand{\b}{\beta}
\renewcommand{\i}{\mathrm{i}}
\renewcommand{\k}{\kappa}
\renewcommand{\r}{\right}

\renewcommand{\Re}{\operatorname{Re}}
\renewcommand{\Im}{\operatorname{Im}}
\renewcommand{\epsilon}{\varepsilon}

\renewcommand{\tilde}{\widetilde}

%%%%%%%%%%%%%%%%%%%%%%%%%%%%%%%%%%%%%%%%%%%%%%%%%%
\title[Exponential moments of $\Re e^{- \mathrm{i} \theta} \log{\zeta(\tfrac{1}{2} + \mathrm{i} t)}$]
{Exponential moments of the logarithm of the Riemann zeta-function twisted by arguments}
\author[S. Inoue]{Sh\={o}ta Inoue}

\address{Department of Mathematics, Tokyo Institute of Technology, 2-12-1 Ookayama, Meguro-ku, Tokyo 152-8551, Japan}
\email{inoue.s.bd@m.titech.ac.jp}

\keywords{The Riemann zeta-function, Moments, The Riemann Hypothesis}

\subjclass{Primary 11M06; Secondary 11M26}

\begin{document}

\begin{abstract}
	We discuss moments of the Riemann zeta-function in this paper.
	The purpose of this paper is to give an upper bound of exponential moments of the logarithm of the Riemann zeta-function twisted by arguments.
	Our results contain an improvement of Najnudel result for exponential moments of the argument of the Riemann zeta-function
	and an unconditional upper bound of the moments.
\end{abstract}

\maketitle

\section{\textbf{Introduction and statement of the main result}}

	\subsection{Introduction}
		Denote moments of the Riemann zeta-function $\zeta$ on the critical line by
		\begin{align}
			M_{k}(T) \ceq \int_{T}^{2T}|\zeta(\tfrac{1}{2} + \i t)|^{2k} dt
		\end{align}
		for $k > -\frac{1}{2}$ and $T \geq 3$.
		The study of $M_{k}(T)$ is one of the main topics in analytic number theory.
		A motivation of the study is the application to the distribution of zeros and prime numbers.
		For example, Ingham \cite{In1937} showed that the estimate $M_{k}(T) \ll_{\e, k} T^{1 + \e}$ for any $\e > 0$, $k \in \ZZ_{> 0}$,
		which is equivalent to the Lindel\"of Hypothesis,
		would imply the Density Hypothesis and $p' - p \ll_{\e} p^{1/2 + \e}$ for any pair of consecutive prime numbers $(p, p'), p < p'$.
		Keating and Snaith \cite{KS2000} conjectured that
		$M_{k}(T) \sim C(k) T (\log{T})^{k^{2}}$ gives the exact asymptotic behavior for $k > -\frac{1}{2}$
		with $C(k)$ an explicitly written constant.
		This conjecture has been proved in the cases $k = 1, 2$ by Hardy and Littlewood \cite{HL1918}, and by Ingham \cite{In1926} respectively.
		However, the other cases are unproven.
		Toward the conjecture, Heap, Radziwi\l\l, and Soundararajan \cite{HRS2019} established $M_{k}(T) \ll T(\log{T})^{k^{2}}$ for $0 \leq k \leq 2$.
		In the case $k > 2$, Soundararajan \cite{SM2009} succeeded in giving the great result that $M_{k}(T) \ll_{k, \e} T (\log{T})^{k^{2} + \e}$
		for any $k \geq 0$ under the Riemann Hypothesis.
		Moreover, as a celebrated result, Harper \cite{H2013} improved Soundararajan's estimate to $M_{k}(T) \ll_{k} T (\log{T})^{k^{2}}$
		for any $k \geq 0$.

		Recently, Najnudel \cite{Na2020} studied the exponential moments of the argument of the Riemann zeta-function on the critical line, namely
		\begin{align}
			\tilde{M}_{k}(T)
			= \int_{T}^{2T}\exp\l( 2k \Im \log{\zeta(\tfrac{1}{2} + \i t)} \r)dt.
		\end{align}
		This is an analog of $M_{k}(T)$ since replacing $\Im$ with $\Re$ matches $M_{k}(T)$.
		The study of $\tilde{M}_{k}(T)$ is also of interest independent of $M_{k}(T)$ because
		the argument of the Riemann zeta-function is directly related to the distribution of the imaginary parts of nontrivial zeros of $\zeta$.
		The relation is explained by the Riemann-von Mangoldt formula:
		\begin{align}
			N(T)
			= \frac{T}{2\pi}\log{\frac{T}{2\pi}} - \frac{T}{2\pi} + \frac{1}{\pi} \Im \log{\zeta(\tfrac{1}{2} + \i T)} + \frac{7}{8} + O\l( \frac{1}{T} \r).
		\end{align}
		Here, $N(T)$ is the number of nontrivial zeros $\rho = \b + \i \gamma$ of $\zeta$ with $0 < \gamma < T$ counted with multiplicity.
		Najnudel showed that $\tilde{M}_{k}(T) \ll_{k, \e} T(\log{T})^{k^{2} + \e}$ for $k \in \RR$ under the Riemann Hypothesis.
		His upper bound is an analog of Soundararajan's.
		The analog of Harper's bound for $\tilde{M}_{k}(T)$ has not proven yet.
		Additionally, the methods for $M_{k}(T)$ in \cite{HL1918}, \cite{HRS2019}, \cite{In1926} do not work for $\tilde{M}_{k}(T)$,
		so it also remains the problem of giving unconditionally a good estimate of $\tilde{M}_{k}(T)$, for example,
		$\tilde{M}_{k}(T) \ll T^{1 + \e}$ for $0 \leq k \leq c$ with $c$ a certain absolute constant.
		The purpose of this paper is to discuss these problems.

		To discuss our problems more generally, we define the exponential moment
		\begin{align}
			M_{k, \theta}(T)
			\ceq \int_{T}^{2T}\exp\l( 2k \Re e^{- \i \theta} \log{\zeta(\tfrac{1}{2} + \i t)} \r)dt
		\end{align}
		for $k \geq 0$, $\theta \in \RR$, and large $T$.
		It then holds that $M_{k, 0}(T) = M_{k}(T)$ and $M_{k, \frac{\pi}{2}}(T) = \tilde{M}_{k}(T)$,
		so $M_{k, \theta}(T)$ is a generalization of both $M_{k}(T)$ and $\tilde{M}_{k}(T)$.
		This moment has been studied by the author and Li in \cite{IL2021}.
		We proved unconditionally for $0 \leq k \leq c$ that
		$M_{k, \theta}(T) \ll_{k} T (\log{T})^{k^{2} + B k^{3}}$ if $\theta \in [-\frac{\pi}{2}, \frac{\pi}{2}]$,
		and that $M_{k, \theta}(T) \gg_{k} T (\log{T})^{k^{2} - B k^{3}}$ if $\theta \in [\frac{\pi}{2}, \frac{3\pi}{2}]$,
		where $c$ and $B$ are certain absolute positive constants.
		Additionally, assuming the Riemann Hypothesis, we showed for any $k \geq 0$, $\e > 0$ that
		$M_{k, \theta}(T) \ll_{k, \e} T (\log{T})^{k^{2} + \e}$ and $M_{k, \theta}(T) \gg_{k, \e} T (\log{T})^{k^{2} - \e}$
		for the same $\theta$ as the unconditional case.
		In this paper, we address our problems by improving our previous result.

	\subsection{Statement of the main result}
		We state the main result under the following zero density estimate
		\begin{align}
			\label{AZDE}
			N(\s, T) \ll T^{1 - \lam(\s - \frac{1}{2})} \Phi(T), \quad \s \geq \frac{1}{2} + \frac{1}{\log{T}}.
		\end{align}
		Here, $\Phi(T)$ is a nonnegative-valued function independent of $\s$,
		and $N(\s, T)$ is the number of nontrivial zeros $\rho = \b + \i \gamma$
		of the Riemann zeta-function with $\b > \s$, $0 \leq \gamma \leq T$ counted with multiplicity.
		The Density Hypothesis states that \cref{AZDE} $\lam = 2$ and $\Phi(T) \ll_{\e} T^{\e}$ would hold.
		We can also take $\lam = +\infty$ for any $\Phi(T)$ under the Riemann Hypothesis.
		Various unconditional estimates of $N(\s, T)$ have been also proved.
		Ingham \cite{In1940} showed that \cref{AZDE} holds for $\lam = \frac{4}{3}$ and $\Phi(T) = (\log{T})^{5}$.
		When $\Phi(T) = \log{T}$, this type estimate was firstly shown by Selberg \cite{SCR}.
		He proved that \eqref{AZDE} holds with $\lam = \frac{1}{4}$ and $\Phi(T) = \log{T}$.
		Conrey \cite{Co1989} improved Selberg's constant to $\lam = \frac{8}{7} - \e$ with $\e$ any fixed positive constant.
		Throughout this paper, we assume $\Phi(T) \geq \log{T}$, and then may also assume that $\lam \geq 1$ thanks to Conrey's result.

		Define the numbers $a(h), b(h)$ by
		\begin{align}
			\label{def_a_h}
			a(h)
			= \left\{
			\begin{array}{cl}
				1 & \text{if\; $0 < h \leq \frac{1}{\sqrt{2}}$,} \\
				1 + \frac{(4h^{2} - 1)^{2}(1 + h^{2})}{8(4h^{2} + 1)(1 - h^{2})}\log\l( 1 + 2\frac{(2h^{2} - 1)(4h^{2} + 1)}{(h^{2} + 1)(4h^{2} - 1)^{2}} \r)
				& \text{if\; $\frac{1}{\sqrt{2}} < h < 1$,}
			\end{array}
			\right.
		\end{align}
		and
		\begin{align}
			\label{def_b_h}
			b(h)
			= \frac{\pi(1 + h^{2})^{2}}{2(1 - h^{2})}.
		\end{align}
		We then define
		\begin{align}
			\label{def_Atheta}
			A(h, \theta) = \frac{h}{a(h)|\cos{\theta}| + b(h)|\sin\theta|}
		\end{align}
		for $0 < h < 1$, $\theta \in \RR$.
		In the following, let $h = h(\theta)$ be the maximal point of $A(h, \theta)$.
		For example, if $\theta = 0$, then $h = 0.72894\cdots$, $A(h, 0) = 0.70738\cdots$,
		and if $\theta = \pm\frac{\pi}{2}$, then $h = \sqrt{2} - 1$ and $A(h, \pm \frac{\pi}{2}) = \frac{1}{2\pi}$.
		By routine numerical calculations, we can confirm that this $h$ is monotonically non-increasing
		in $\theta \in [0, \frac{\pi}{2}]$, and that
		$\frac{2}{5} < \sqrt{2} - 1 = h(\frac{\pi}{2}) \leq h \leq h(0) = 0.72894\cdots < \frac{3}{4}$ and
		$\frac{1}{9} < \frac{12}{24|\cos \theta| + 25 \pi |\sin \theta|}
			= A(\frac{1}{2}, \theta) \leq A(h, \theta) \leq \frac{h}{|\cos{\theta}| + |\sin{\theta}|} \leq h < 1$
		for any $\theta \in \RR$, so we may assume $\frac{2}{5} < h < \frac{3}{4}$ and $\frac{1}{9} < A(h, \theta) < 1$
		throughout this paper.

		The following is the main result of this paper.

		\begin{theorem} \label{MTUBMZ}
			Let $\lam$ be a positive constant, and $\Phi$ be a nonnegative-valued function satisfying \cref{AZDE}.
			Let $\theta \in [-\tfrac{\pi}{2}, \tfrac{\pi}{2}]$, and let $0 < \e \leq \frac{1}{100}$ be a small constant.
			For any $0 \leq k < A(h, \theta) \lam - \e$
			and any $T \geq e^{C_{1} (k \log{(k + 3)})^{2}} T_{0}$, we have
			\begin{align} \label{MTUBMZ1}
				M_{k, \theta}(T)
				&\leq \exp(C_{1} e^{C_{1} k}) T (\log{T})^{k^{2}}
				+ C_{1} T(\log{T})^{k^{2}(1 + \frac{k + \e}{A(h, \theta) \lam - (k + \e)})}
				(\Phi(T) / \log{T})^{\frac{k + \e}{A(h, \theta) \lam}}\\
				&\qquad + C_{1} T \frac{\Phi(T)}{\log{T}}.
			\end{align}
			Here, $C_{1}$ and $T_{0}$ are positive numbers depending only on $\e$.
		\end{theorem}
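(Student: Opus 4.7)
The plan is to follow the truncation strategy of Soundararajan \cite{SM2009}, adapted to the twisted logarithm as in the author's earlier work with Li \cite{IL2021}, and to sharpen the admissible range of $k$ by optimizing over the auxiliary parameter $h$ encoded in $A(h, \theta)$. The starting point should be a Soundararajan--Najnudel-type pointwise inequality of the form
\begin{align*}
	\Re e^{-\i\theta} \log \zeta(\tfrac{1}{2} + \i t)
	\le \Re e^{-\i \theta} \sum_{n \le X} \f{\Lambda(n) w(n)}{n^{\s_0 + \i t} \log n} + E(t; X, \s_0),
\end{align*}
valid with $\s_0 = \tfrac{1}{2} + h/\log X$ for the maximizer $h = h(\theta) \in (\tfrac{2}{5}, \tfrac{3}{4})$ and a smooth cutoff $w$. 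The error $E(t; X, \s_0)$ measures the contribution of zeros $\rho = \b + \i\g$ with $\b > \s_0$ lying within a small window around $\i t$, and the constants $a(h)$ and $b(h)$ in \cref{def_a_h,def_b_h} should emerge as the sharp coefficients when the inequality is applied to $\Re\log\zeta = \log|\zeta|$ and $\Im\log\zeta = \arg\zeta$ respectively. Writing $\Re e^{-\i\theta}\log\zeta = (\cos\theta)\log|\zeta| + (\sin\theta)\arg\zeta$ and combining the two bounds produces the composite coefficient $a(h)|\cos\theta| + b(h)|\sin\theta|$ that appears in the denominator of $A(h, \theta)$.

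Next I would partition $[T, 2T]$ according to whether $E(t; X, \s_0) \le V$ for a threshold $V$ to be chosen in terms of $k$. On the good set, $\exp(2k \Re e^{-\i\theta}\log\zeta)$ is controlled by $e^{2kV}$ times the exponential of the short Dirichlet polynomial, and expanding the latter as a truncated Taylor series, applying prime-power orthogonality $\int_T^{2T} n^{-\i t}\d t \ll T \mathbf{1}_{n=1}$, and invoking Mertens' theorem reduces the main computation to the Gaussian variance $\sum_{p \le X} 1/p \sim \log\log T$. This yields the $T(\log T)^{k^2}$ contribution with prefactor $\exp(C_{1} e^{C_{1} k})$ absorbing higher prime-power terms and truncation losses, explaining the first summand in \cref{MTUBMZ1}. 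On the exceptional set where $E(t; X, \s_0) > V$, the zero density estimate \cref{AZDE}, applied with $X$ a suitable power of $\log T$ so that $h/\log X$ matches the distance from the critical line at which zeros are weighted, produces a tail bound for the measure of $\{t : E > V\}$ that decays like a power of $\log T$ with exponent proportional to $\lam A(h, \theta)$. Interpolating this tail against $e^{2kV}$ by a H\"older/Chernoff-type optimization over $V$ yields the middle summand of \cref{MTUBMZ1}, valid precisely when $k < A(h, \theta)\lam - \e$; the final $T\Phi(T)/\log T$ accounts for the handful of $t$ lying too close to a zero for the explicit formula to apply at all.

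The hard part will be calibrating the inequality so that both $a(h)$ and $b(h)$ appear as genuinely sharp coefficients inside a single unified estimate. Soundararajan's argument yields $a(h)$ at $\theta = 0$ through Jensen's formula and the positivity of $\log|\zeta|$ minus a short Dirichlet sum, while Najnudel's adaptation yields $b(h)$ at $\theta = \pi/2$ via integration by parts and the tracking of sign changes of $\arg\zeta$. Merging the two mechanisms without losing constants is delicate, particularly in the regime $h > 1/\sqrt{2}$ where the more intricate closed form of $a(h)$ in \cref{def_a_h} takes effect and presumably marks the transition between the two regimes. A secondary technical difficulty is the joint choice of $X$, $V$, and the order of the Taylor truncation of the exponential, so that all three parameters remain mutually consistent and the final estimate does not deteriorate as $k$ approaches the boundary $A(h, \theta)\lam$.
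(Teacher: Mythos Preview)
Your outline captures the coarse shape of the argument but misses the mechanism that produces the sharp exponent $k^{2}$ in the first term of \cref{MTUBMZ1}. A single good/bad partition based on a threshold $V$ for the zero-error $E(t;X,\sigma_0)$, followed by one Taylor truncation of the full Dirichlet polynomial, is essentially the Soundararajan scheme of \cite{SM2009}; that route yields $T(\log T)^{k^{2}+\e}$, not $T(\log T)^{k^{2}}$. The obstruction is concrete: the Taylor approximation $e^{2kP}\approx\bigl(\sum_{n\le N}(kP)^{n}/n!\bigr)^{2}$ is only valid on the event $\{|P|\ll N/k\}$, and with a single length $X=T^{\delta}$ one must take $N\asymp\delta^{-1}$, which forces either $\delta$ so small that the error term $\asymp(\log T)/\log X$ blows up, or $\delta$ so large that the exceptional set $\{|P|\gg N/k\}$ costs an extra $(\log T)^{\e}$. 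The paper avoids this by importing Harper's iterative scheme \cite{H2013}: a sequence of scales $\delta_{1}<\cdots<\delta_{\I}$ is introduced, the Dirichlet polynomial is sliced into pieces $G_{(i,j)}$ supported on $(T^{\delta_{i-1}},T^{\delta_{i}}]$, and the good set $\mathcal{T}$ in \cref{def_calT} requires \emph{every} piece $G_{(i,\I)}$ to be small, so that each factor admits its own Taylor truncation (see \cref{lem_T}). The intermediate sets $\mathcal{S}(j)$ in \cref{def_calSj} record the first scale at which control is lost and are handled in \cref{lem_Sj}. Your partition, based only on $E\le V$, controls neither the size of $P$ nor the scale at which it first misbehaves.

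Two further points need adjustment. First, the abscissa in the approximate formula is not a fixed $\sigma_{0}=\tfrac12+h/\log X$; it is the \emph{zero density function} $\s_{\theta,K}(t,X)$ of \cref{def_s_X}, which shifts rightward whenever a zero lies near $t$. The partition into $\mathcal{B}(j)$ in \cref{def_Bj} tracks whether $\s_{\theta,K}(t,T^{\delta_{j}})$ attains its minimum value, and this is what allows the zero density hypothesis \cref{AZDE} to enter through \cref{EXDZ}. Second, the constants $a(h)$ and $b(h)$ do not arise by splicing Soundararajan's and Najnudel's inequalities; they emerge simultaneously from a single explicit-formula calculation (the analysis of $\Re I_{3}$ and $\Im I_{3}$ in the proof of \cref{AFLZ}), with $a(h)$ bounding the real part of the zero sum and $b(h)$ the imaginary part. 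The middle term of \cref{MTUBMZ1} then arises not from a Chernoff optimisation over $V$ but from applying H\"older with exponent $a=A(h,\theta)\lam/(k+\e)$ to separate the zero-density factor $\meas\mathcal{B}(j+1)^{c}$ from the Dirichlet polynomial moment on each $\mathcal{S}(j)$.
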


		\Cref{MTUBMZ} involves an improvement of the estimate due to Najnudel \cite[Theorem 1]{Na2020},
		and also a small improvement of Harper's result \cite[Theorem 1]{H2013} for the dependence of $T$ and $k$.
		For instance, we can deduce the following corollary from this theorem and the zero density estimate by Ingham.

		\begin{corollary}
			For any small $\e > 0$ and $T \geq T_{1}(\e)$ with $T_{1}(\e)$ a large constant depending only on $\e$, we have
			\begin{align}	\label{ES:UBMKtuc}
				\tilde{M}_{k}(T)
				\ll_{\e} T (\log{T})^{B(k) + \e}
				\ll_{\e} T^{1 + \e}
			\end{align}
			for $|k| \leq c_{0} - \e$ with $c_{0} = \frac{4}{3}A(h, \pm\frac{\pi}{2}) = \frac{2}{3\pi}$
			and $B(k) = k^{2}(1 + k / (c_{0} - k)) + 4k / c_{0}$ unconditionally.
			Assuming the Riemann Hypothesis, we have
			\begin{align}	\label{ES:UBMKt}
				M_{k, \theta}(T)
				\ll e^{e^{O(k)}} T (\log{T})^{k^{2}}
			\end{align}
			for any $\theta \in [-\frac{\pi}{2}, \frac{\pi}{2}]$, $k \geq 0$, and any $T \geq e^{C (k \log(k + 3))^{2}}T_{0}$
			with $C$ and $T_{0}$ large absolute constants.
		\end{corollary}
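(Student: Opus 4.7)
The plan is to deduce both parts of the corollary by specializing \Cref{MTUBMZ} to appropriate choices of $\lam$ and $\Phi(T)$ and then simplifying the three terms on the right-hand side of \cref{MTUBMZ1}.

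For the unconditional estimate I would invoke Ingham's zero-density estimate \cite{In1940}, which allows $\lam = 4/3$ and $\Phi(T) = (\log T)^{5}$ in \cref{AZDE}. Taking $\theta = \pm\tfrac{\pi}{2}$ and using $A(h, \pm\tfrac{\pi}{2}) = \tfrac{1}{2\pi}$ gives $A(h,\theta)\lam = \tfrac{2}{3\pi} = c_{0}$, matching the threshold in the corollary. Since $\tilde{M}_{k}(T) = M_{|k|,\,\sgn(k)\pi/2}(T)$, substituting $\Phi(T)/\log T = (\log T)^{4}$ into \cref{MTUBMZ1} yields three contributions: the leading term $\exp(C_{1} e^{C_{1} k}) T(\log T)^{k^{2}}$, which is $O_{\e}(T(\log T)^{k^{2}})$ since $k < c_{0} < 1$; a middle term of the form $T(\log T)^{E(k,\e)}$ with
\[
E(k, \e) \;=\; k^{2}\!\left(1 + \tfrac{k+\e}{c_{0} - k - \e}\right) + \tfrac{4(k+\e)}{c_{0}};
\]
and a final term $T(\log T)^{4}$. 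Comparing $E(k,\e)$ with $B(k)$ gives the first bound in \cref{ES:UBMKtuc}, and the second follows because $B(k) \ll_{\e} 1$ throughout $|k| \le c_{0} - \e$ and because $(\log T)^{4} \ll T^{\e}$ for $T \ge T_{1}(\e)$.

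For the conditional estimate, under the Riemann Hypothesis we have $N(\sigma, T) = 0$ for $\sigma > \tfrac{1}{2}$, so \cref{AZDE} holds with an arbitrarily large $\lam$ and $\Phi(T) = \log T$. Letting $\lam \to \infty$, the constraint $k < A(h,\theta)\lam - \e$ becomes vacuous, and the fractions $\tfrac{k+\e}{A(h,\theta)\lam - (k+\e)}$ and $\tfrac{k+\e}{A(h,\theta)\lam}$ tend to $0$; hence the middle term in \cref{MTUBMZ1} collapses to $O(T(\log T)^{k^{2}})$ and the last term to $O(T)$. Combined with the leading contribution this gives \cref{ES:UBMKt}.

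The real analytic content is already packed into \Cref{MTUBMZ}, so the main obstacle is bookkeeping in $\e$. To ensure $E(k,\e) \le B(k) + \e_{1}$ one computes
\[
E(k,\e) - B(k) \;=\; \frac{k^{2} c_{0}\, \e}{(c_{0} - k)(c_{0} - k - \e)} + \frac{4\e}{c_{0}},
\]
which, combined with the hypothesis $|k| \le c_{0} - \e_{1}$ (so $c_{0} - k \ge \e_{1}$), shows that applying the theorem with a sufficiently small parameter $\e \asymp \e_{1}^{3}$ suffices; the resulting $\e_{1}$-dependent constants are absorbed into $\ll_{\e}$, and the mild lower bound on $T$ in \Cref{MTUBMZ} is dominated by $T \ge T_{1}(\e)$ since $k$ is bounded.
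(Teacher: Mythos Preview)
Your approach is exactly the paper's: it states the corollary right after \Cref{MTUBMZ} with only the remark ``we can deduce the following corollary from this theorem and the zero density estimate by Ingham.'' Your specializations $(\lambda,\Phi)=(4/3,(\log T)^{5})$ in the unconditional case and $\lambda\to\infty$, $\Phi(T)=\log T$ under RH are the intended ones, the identity $\tilde{M}_{k}(T)=M_{|k|,\,\sgn(k)\pi/2}(T)$ is correct, and your $\e$-bookkeeping on $E(k,\e)-B(k)=\dfrac{k^{2}c_{0}\,\e}{(c_{0}-k)(c_{0}-k-\e)}+\dfrac{4\e}{c_{0}}$ is accurate.

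One point you gloss over in the unconditional part: you list the third contribution $C_{1}T(\log T)^{4}$ but then assert the first inequality of \cref{ES:UBMKtuc} from the comparison of $E(k,\e)$ with $B(k)$ alone. Since $B(0)=0$ and in fact $B(k)<4$ throughout a sizeable initial range of $k$, the term $T(\log T)^{4}$ is \emph{not} dominated by $T(\log T)^{B(k)+\e}$ there. You do correctly fold it into the weaker bound $T^{1+\e}$; it is only the sharper first inequality in \cref{ES:UBMKtuc} that is not fully justified for small $k$ from \Cref{MTUBMZ} with Ingham's input alone. This looks like an imprecision in the corollary's statement rather than a flaw in your method, but you should flag it (or replace $B(k)$ by $\max\{B(k),4\}$) rather than pass over it silently.
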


		The size of the term $e^{e^{O(k)}}$ in \cref{ES:UBMKt} is the same as the result of Harper.
		On the other hand, the dependency of $T$ and $k$ in \cref{ES:UBMKt} allows the range of $k$ to be $k \leq c \sqrt{\log{T} / \log{\log{T}}}$,
		where $c$ is a small constant.
		The original Harper's range is $k \leq c\sqrt{\log{\log{T}}}$, so our range of $k$ is wider than his.
		In particular, choosing $k = \log{\log{T}}$ we obtain Littlewood's conditional bounds
		$\log|\zeta(\frac{1}{2} + \i t)| \leq C\frac{\log{t}}{\log{\log{t}}}$
		and $S(t) \ceq \frac{1}{\pi}\Im \log{\zeta(\tfrac{1}{2} + \i t)} \ll \frac{\log{t}}{\log{\log{t}}}$.
		This is a new application of our result and may be one reason to show the sharpness of the term $e^{e^{O(k)}}$ given by Harper.

	\subsection{Organization of the paper and some comments on the proof of \cref{MTUBMZ}}
		\Cref{MTUBMZ} is proved by combining Harper's method \cite{H2013} with a new approximate formula proved in \cref{Sec:AFLZ}.
		The main part of the proof of \cref{MTUBMZ} is devoted to determine the range of $k$ in the unconditional case.
		Actually, conditional result \cref{ES:UBMKt} can be essentially established by using Harper's method and the approximate formula in \cite{II2019}.
		On the other hand, the range of $k$ in the unconditional case derived from the approximate formula is narrow
		because the constant multiplied by the error term of the formula is big.
		In fact, the range of $k$ given in Theorem 2.4 of \cite{IL2021} proved by using the approximate formula in \cite{II2019}
		is roughly $0 \leq k \leq a_{6} = e^{-10}$.
		For this reason, we prove a new approximate formula for the Riemann zeta-function in this paper.
		The method of the proof is based on Selberg's \cite{SCR}.
		An important change from his method is that the implicit constant is optimized
		by parametrizing the ``zero density function'' defined by \cref{def_s_X}.

		In \cref{Sec:UBDRZ}, we show a result for large deviations of the distribution of the Riemann zeta-function under zero density estimate \cref{AZDE}.
		Harper used the result of moments due to Soundararajan in \cite{SM2009} when evaluating an integral.
		The argument is under the Riemann Hypothesis.
		Therefore, we need to prove an alternative result, which can be used only under the assumption of the zero density estimate.
		We give such a result in \cref{Sec:UBDRZ}.

		In \cref{Sec:EMDP}, we apply Harper's method to our approximate formula.
		Harper used the inequality of Soundararajan \cite{SM2009}, which holds under the Riemann Hypothesis.
		Therefore, we again face the issue of modifying Harper's method appropriately.
		We address this issue by dividing $[T, 2T]$ according to the value of the zero density function,
		in addition to his original method of dividing the length of Dirichlet polynomials.
		However, the problem remains in this method that the power of $\log{T}$ in \cref{ES:UBMKtuc} does not reach $k^{2}$.
		We explain below why this problem arises.
		The sequence of functions $\{p^{-\i t}\}$ on $t \in [T, 2T]$ over prime numbers is convergent weakly as $T \rightarrow \infty$
		to a sequence of independent random variables,
		so $\sum_{p \in \mathcal{P}_{1}}a(p)p^{-\i t}$ and $\sum_{p \in \mathcal{P}_{2}}b(p)p^{-\i t}$
		are also convergent weakly to independent random variables
		for any complex sequences $\{a(p)\}, \{b(p)\}$ and disjoint finite sets of prime numbers $\mathcal{P}_{1}, \mathcal{P}_{2}$.
		This fact plays an important role in the original method of Harper,
		which allows us the upper bound of moments to reach $T (\log{T})^{k^{2}}$ under the Riemann Hypothesis.
		On the other hand, we do not assume the Riemann Hypothesis, so objects written by some combinations of primes and zeros appear.
		However, we do not know an independency between primes and zeros.
		Hence, we cannot deal with objects combining prime factors and zero factors well unconditionally.
		In this paper, we use H\"older's inequality to such objects, but the inequality is not enough to derive the sharp upper bound.
		This is the reason why estimate \cref{ES:UBMKtuc} does not reach $T(\log{T})^{k^{2}}$.
		This problem would be one motivation of the study of the independency between primes and zeros, which is related to the splitting conjecture
		of Gonek, Hughes, and Keating \cite{GHK2007}.

\section{\textbf{An approximate formula for the Riemann zeta-function}}	\label{Sec:AFLZ}

	Let $\rho = \b + \i \gamma$ be a nontrivial zero of the Riemann zeta-function with $\b, \gamma \in \RR$.
	Denote the von Mangoldt function by $\Lam(n)$.
	Define $s_{\theta, K}(t, X) = \s_{\theta, K}(t, X) + \i t$, $w_{X}(y)$, and $P_{\theta, K}(t, X)$ by
	\begin{gather}
		\label{def_s_X}
		\s_{\theta, K}(t, X)
		= \frac{1}{2} + \frac{1}{h} \max_{|t - \gamma| \leq K X^{|\b - 1/2|} / \log{X}}\l\{ \b - \frac{1}{2}, \frac{K}{\log{X}} \r\},\\
		\label{def_w_X}
		w_{X}(y)
		= \l\{
		\begin{array}{cl}
			1                                                                    & \text{if\; $1 \leq y \leq X^{1 / 3}$,}         \\[2mm]
			\frac{9(\log(X / y))^2 - 6(\log(X^{2/3} / y))^2}{2(\log{X^{2/3}})^2} & \text{if\; $X^{1 / 3} \leq y \leq X^{2 / 3}$,} \\[2mm]
			\frac{9(\log(X / y))^2}{2(\log{X})^2}                                & \text{if\; $X^{2 / 3} \leq y \leq X$,}         \\[2mm]
			0                                                                    & \text{otherwise.}
		\end{array}
		\r.\\
		\label{def_PP}
		P_{\theta, K}(t, X) =
		\sum_{p \leq X}\l( \frac{w_{X}(p)}{p^{s_{\theta, K}(t, X)}}\l( 1 + (\s_{\theta, K}(t, X) - \tfrac{1}{2})\log{p} \r)
		+ \frac{1}{2p^{2s_{\theta, K}(t, X)}} \r)
	\end{gather}
	for $\theta \in \RR$, $K \geq 1$, and $X \geq 3$.
	If there are no zeros such that $|t - \gamma| \leq K X^{|\b - 1/2|} / \log{X}$, we let $\s_{\theta, K}(t, X) = \frac{1}{2} + \frac{K}{h \log{X}}$,
	and so the inequality $\s_{\theta, K}(t, X) \geq \frac{1}{2} + \frac{K}{h \log{X}}$ always holds.
	We also define $Y_{\theta, K}(t, X)$ by
	\begin{align}
		\label{def_Y_hK}
		Y_{\theta, K}(t, X) =
		\frac{1}{2}\sum_{\rho \in S}\log\l( \frac{(\s_{\theta, K}(t, X) - \b)^2 + (t - \gamma)^{2}}{(\b - \frac{1}{2})^{2} + (t - \gamma)^{2}} \r),
	\end{align}
	where $S$ is the set of nontrivial zeros $\rho = \b + \i\gamma$ of the Riemann zeta-function such that
	$|\b - \frac{1}{2}| \leq \sqrt{\frac{1}{2}(\s_{\theta, K}(t, X) - \frac{1}{2})^{2} + (t - \gamma)^{2}}$.
	Then we have the following proposition.

	\begin{proposition} \label{AFLZ}
		Let $\theta \in \RR$, and let $K$ be large.
		For any $|t| \geq 3$ with $t$ not equal to the ordinates of zeros and $3 \leq X \leq |t|^{6}$, we have
		\begin{align}
			&\bigg|\Re e^{-\i \theta}\log{\zeta(\tfrac{1}{2} + \i t)} - \Re e^{-\i \theta}P_{\theta, K}(t, X) + \Re e^{-\i\theta}Y_{\theta, K}(t, X) \bigg|\\
			&\leq \l( \frac{h}{A(h, \theta)} + \frac{9}{K} \r)
			(\s_{\theta, K}(t, X) - \tfrac{1}{2})\l( \frac{1}{2}\log{|t|} - \Re \sum_{p \leq X}\frac{w_{X}(p) \log{p}}{p^{s_{\theta, K}(t, X)}} + C_{1}\log{X} \r),
		\end{align}
		and $Y_{\theta, K}(t, X) \geq 0$.
		Here, $C_{1}$ is an absolute positive constant.
	\end{proposition}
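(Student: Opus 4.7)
The plan is to refine Selberg's classical derivation of an approximate formula for $\log\zeta(\tfrac{1}{2}+\i t)$ by allowing the auxiliary abscissa to be the zero density function $\sigma_{\theta,K}(t,X)$ itself, whose definition builds in the constraint $\beta-\tfrac{1}{2}\leq h(\sigma_{\theta,K}-\tfrac{1}{2})$ for every nearby zero. After taking $\Re e^{-\i\theta}$ and splitting the remaining error into its $\cos\theta$ and $\sin\theta$ components, I would estimate each piece separately, producing the linear combination $a(h)|\cos\theta|+b(h)|\sin\theta|=h/A(h,\theta)$ that appears in the statement. The smoothing $w_X$ arises as the inverse Mellin transform of a specific cubic-type kernel whose three-piece support structure is engineered to reproduce the piecewise-quadratic formula of \cref{def_w_X}.

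\textbf{Steps.} Assuming $t$ is not the ordinate of a zero, I would start from the identity
\[
\log\zeta(\tfrac{1}{2}+\i t)-\log\zeta(s_{\theta,K}(t,X))=-\int_{1/2}^{\sigma_{\theta,K}(t,X)}\frac{\zeta'}{\zeta}(\sigma+\i t)\,d\sigma,
\]
apply the Hadamard partial fraction expansion of $\zeta'/\zeta$ (restricting to zeros with $|\gamma-t|\leq 1$ and absorbing the tail into $O(\log|t|)$), and integrate term-by-term to obtain
\[
\log\zeta(\tfrac{1}{2}+\i t)=\log\zeta(s_{\theta,K})-\sum_{\rho}\log\frac{s_{\theta,K}-\rho}{\tfrac{1}{2}+\i t-\rho}+O\bigl((\sigma_{\theta,K}-\tfrac{1}{2})\log|t|\bigr).
\]
Next, I would represent $\log\zeta(s_{\theta,K})$ as a smoothed prime sum via a Perron-type contour integral whose Mellin kernel has inverse transform $w_X$: the residue at the origin produces the prime contribution of $P_{\theta,K}$; the correction factor $1+(\sigma_{\theta,K}-\tfrac{1}{2})\log p$ comes from a first-order Taylor expansion of $p^{-s}$ about $s_{\theta,K}$ needed to reconcile the smoothed prime sum at $s_{\theta,K}$ with an analogue on the $\tfrac{1}{2}$-line, and the $\tfrac{1}{2}p^{-2s_{\theta,K}}$ absorbs the $k=2$ prime-power contribution. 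Finally, I would split the zero sum according to membership in $S$: the zeros $\rho\in S$, after taking $\Re e^{-\i\theta}$, yield exactly $\cos\theta\cdot Y_{\theta,K}$ (with the $\sin\theta$-weighted argument residue absorbed into the error), while the zeros outside $S$ are estimated directly against the error.

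\textbf{Main obstacle.} The sharpness of the coefficient $h/A(h,\theta)$ is the decisive point, and the hardest to execute. It forces separate estimation of the real and imaginary parts of the remaining zero sum, using only the constraint $\beta-\tfrac{1}{2}\leq h(\sigma_{\theta,K}-\tfrac{1}{2})$ built into the definition of $\sigma_{\theta,K}$. The coefficient $a(h)$ arises from bounding sums of the form $\sum_\rho(\sigma_{\theta,K}-\beta)/((\sigma_{\theta,K}-\beta)^2+(t-\gamma)^2)$: when $h\leq 1/\sqrt{2}$ the constraint immediately yields $a(h)=1$, whereas when $h>1/\sqrt{2}$ an explicit integration over the admissible $(\beta,\gamma)$-region produces the logarithmic correction appearing on the second line of \cref{def_a_h}. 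The coefficient $b(h)=\pi(1+h^2)^2/(2(1-h^2))$ comes from the companion imaginary bound $\sum_\rho(t-\gamma)/((\sigma_{\theta,K}-\beta)^2+(t-\gamma)^2)$, the blow-up as $h\to 1$ reflecting the fact that the allowed $\beta$-region approaches $\sigma_{\theta,K}$. Combining these bounds via $|\Re(e^{-\i\theta}z)|\leq|\cos\theta|\,|\Re z|+|\sin\theta|\,|\Im z|$ gives the stated coefficient; the extra $9/K$ absorbs the truncation of the zero sum to $|\gamma-t|\leq KX^{|\beta-1/2|}/\log X$. Non-negativity of $Y_{\theta,K}$ should emerge as a byproduct of the same derivation applied to $\log|\zeta|$: $Y_{\theta,K}$ is identified with the non-positive $\log|\zeta|$-defect between $\tfrac{1}{2}+\i t$ and $s_{\theta,K}$ (up to the smooth prime contribution), whose sign is forced by the defining condition of $S$ together with the choice of $\sigma_{\theta,K}$.
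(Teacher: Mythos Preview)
Your overall plan follows Selberg's template and has the right shape, but there are two concrete gaps that would prevent you from recovering the sharp constant $h/A(h,\theta)$, which is the whole point of the proposition.

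\emph{First gap: the error $O((\sigma_{\theta,K}-\tfrac{1}{2})\log|t|)$.} When you truncate the Hadamard expansion to $|\gamma-t|\le 1$ and absorb the tail, the integrated error acquires an \emph{uncontrolled} absolute constant multiplying $(\sigma_{\theta,K}-\tfrac{1}{2})\log|t|$. That is exactly the order of the main error term in the statement, so it would swamp the coefficient $h/A(h,\theta)+9/K$. The paper avoids this by writing $\log\zeta(\tfrac{1}{2}+\i t)=I_1+I_2+I_3$ with
\[
I_3=\int_{\sigma_{\theta,K}}^{1/2}\Bigl(\tfrac{\zeta'}{\zeta}(\sigma+\i t)-\tfrac{\zeta'}{\zeta}(s_{\theta,K})\Bigr)\,d\sigma,
\]
so that the non-zero part of the partial-fraction formula cancels and one gets $I_3=\sum_\rho\int(\frac{1}{\sigma+\i t-\rho}-\frac{1}{s_{\theta,K}-\rho})\,d\sigma+O(1)$ with an absolute $O(1)$. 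The subtracted piece then reappears explicitly as $I_2=(\tfrac{1}{2}-\sigma_{\theta,K})\tfrac{\zeta'}{\zeta}(s_{\theta,K})$, which is what produces the factor $1+(\sigma_{\theta,K}-\tfrac{1}{2})\log p$ in $P_{\theta,K}$. Moreover, the bracket $\tfrac{1}{2}\log|t|-\Re\sum_p w_X(p)(\log p)p^{-s_{\theta,K}}+C_1\log X$ on the right of the statement is not a generic $O(\log|t|)$: it arises by applying the identity of \cref{ERZDP} to convert the exact zero sum $\sum_\rho(\sigma_{\theta,K}-\beta)/((\sigma_{\theta,K}-\beta)^2+(t-\gamma)^2)$ back into prime-sum form. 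You never invoke this step.

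\emph{Second gap: the source of $a(h)$ and $b(h)$.} Your description misidentifies what is being bounded. The constants $a(h),b(h)$ are not bounds for the sums $\sum_\rho(\sigma_{\theta,K}-\beta)/(\cdots)$ and $\sum_\rho(t-\gamma)/(\cdots)$; rather, in the paper the real part of $I_3$ equals $-Y_{\theta,K}+\tilde{Y}_{\theta,K}+(\sigma_{\theta,K}-\tfrac{1}{2})\sum_\rho(\sigma_{\theta,K}-\beta)/(\cdots)+O(1)$, and $a(h)$ is obtained by bounding the \emph{residual} $\tilde{Y}_{\theta,K}$ (the log-sum over $\rho\notin S$) against that same zero sum---this is where the set $S$ and the dichotomy $h\le 1/\sqrt{2}$ versus $h>1/\sqrt{2}$ enter. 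For $b(h)$, the paper decomposes $\Im I_3$ into two pieces $\Xi_1,\Xi_2$ of opposite sign and uses $|a+b|\le\max\{|a|,|b|\}$, then applies the Selberg-type inequality $\sum_\rho(\sigma_{\theta,K}-\tfrac{1}{2})/(\cdots)\le\tfrac{1+h^2}{1-h^2}\sum_\rho(\sigma_{\theta,K}-\beta)/(\cdots)$ of \cref{lem_h_zf}; this last inequality, not the geometry of the $\beta$-region alone, is what produces the factor $(1-h^2)^{-1}$. Finally, the $9/K$ does not come from any truncation of the zero sum: it collects the small $\Delta_i$-terms generated by \cref{INE_MZ_NZ} and the $\tfrac{1+h^2}{h(1-h)K}$ remainder in the $\Im I_3$ analysis.
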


	\subsection{Preliminaries}
		We require some lemmas to prove \cref{AFLZ}.

		\begin{lemma}	\label{FLDZ}
			Let $u: \RR_{\geq 0} \rightarrow \RR_{\geq 0}$ be a Riemann-integrable function such that $\int_{0}^{\infty}u(x)dx = 1$,
			and $\tilde{u}(z) = \int_{0}^{\infty}u(x) x^{z - 1}dx$ is absolutely convergent for $\Re z > 0$ and satisfies $\tilde{u}(z) \ll |z|^{-1}$
			in $\Re z \leq C$ for any fixed constant $C > 0$. Put $w_{u, X}(y) = \int_{y^{1 / \log{X}}}^{\infty}u(x)dx$.
			For any $X \geq 3$ and any $s \in \CC$ not equal to the zeros of the Riemann zeta-function, we have
			\begin{align}
				\label{FLDZ1}
				\frac{\zeta'}{\zeta}(s)
				&= -\sum_{n = 1}^{\infty}\frac{\Lam(n)}{n^{s}}w_{u, X}(n) + \sum_{\rho}\frac{1}{s - \rho}\tilde{u}(1 - (s - \rho) \log{X})\\
				&\qquad+ \sum_{n = 1}^{\infty}\frac{1}{s + 2n}\tilde{u}(1 - (s + 2n) \log{X}) - \frac{1}{s - 1}\tilde{u}(1 - (s - 1) \log{X}).
			\end{align}
		\end{lemma}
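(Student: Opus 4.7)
\textbf{Proof plan for \cref{FLDZ}.} The plan is to prove the identity by Mellin-transforming the weight $w_{u, X}(n)$ into a contour integral and then shifting the contour through the singularities of $\zeta'/\zeta$, collecting the residues from the zeros and the pole of $\zeta$.

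First I would establish the Mellin--Barnes representation $w_{u, X}(n) = \frac{1}{2\pi\i}\int_{(-\delta)} \frac{n^{w}}{w}\tilde{u}(1 - w \log{X})\, dw$ for any small $\delta > 0$. This follows by computing the Mellin transform of $F(y) \ceq \int_{y}^{\infty} u(x)\, dx$, which by Fubini equals $\tilde{u}(z + 1)/z$ on $\Re z > 0$, applying Mellin inversion, and then substituting $w = -z / \log{X}$. Multiplying both sides by $\Lam(n)/n^{s}$ and summing over $n$ in the half-plane $\Re s > 1 + \delta$, the interchange of sum and integral is legal by the decay $\tilde{u}(z) \ll |z|^{-1}$ together with the absolute convergence of $\sum_{n} \Lam(n) n^{-\Re(s - w)}$, and the Dirichlet series collapses into $-\zeta'/\zeta(s - w)$, yielding
\begin{align*}
	\sum_{n = 1}^{\infty} \frac{\Lam(n)}{n^{s}} w_{u, X}(n)
	= \frac{1}{2\pi\i}\int_{(-\delta)} \l( -\frac{\zeta'}{\zeta}(s - w) \r) \frac{\tilde{u}(1 - w \log{X})}{w}\, dw.
\end{align*}

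Next I would shift the contour rightward to $\Re w = M$ for large $M$ avoiding the points $\Re s + 2n$ and apply the residue theorem. As a function of $w$, the integrand has simple poles at $w = 0$ (residue $-\zeta'/\zeta(s)$, using $\tilde{u}(1) = 1$), at each $w = s - \rho$ for a nontrivial zero $\rho$ (where the chain rule $d(s - w)/dw = -1$ converts the $+m_{\rho}$ residue of $\zeta'/\zeta$ at $\rho$ into an overall residue of $\tilde{u}(1 - (s - \rho)\log{X})/(s - \rho)$ for the full integrand), at $w = s - 1$ (residue $-\tilde{u}(1 - (s - 1)\log{X})/(s - 1)$ coming from the pole of $\zeta$ and an analogous sign flip), and at $w = s + 2n$ for $n \geq 1$ (residue $\tilde{u}(1 - (s + 2n)\log{X})/(s + 2n)$ from each trivial zero). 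Equating the original integral to the sum of residues plus the shifted integral and rearranging yields the claimed identity in the region $\Re s > 1 + \delta$, and analytic continuation in $s$ extends it to any $s$ that is not a zero.

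The main technical obstacle is verifying that the integral on $\Re w = M$ vanishes as $M \to \infty$ and that the resulting series over zeros converges absolutely. For the vertical integral, I would use that $\zeta'/\zeta(s - w)$ is uniformly bounded far to the right of the critical strip (where $\Re(s - w) \ll -M$ keeps us away from the trivial zeros), together with the hypothesis giving $\tilde{u}(1 - w \log{X}) \ll |w \log{X}|^{-1}$; the trivial-zero residue at $w = s + 2n$ then contributes a term of size $\ll 1 / (n^{2} \log{X})$, which sums up. For the zero sum, the same decay yields $\tilde{u}(1 - (s - \rho)\log{X})/(s - \rho) \ll 1 / (|s - \rho|^{2} \log{X})$, and absolute convergence follows from the classical bound $\sum_{\rho} |s - \rho|^{-2} \ll \log{(|s| + 2)}$. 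A dyadic decomposition in $\Im w$, combined with $N(T + 1) - N(T) \ll \log{T}$, handles the passage of the contour past the horizontal bands of zeros.
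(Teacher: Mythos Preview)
Your plan is essentially identical to the paper's: write the smoothed sum $\sum_{n}\Lambda(n)n^{-s}w_{u,X}(n)$ as a vertical contour integral of $-\zeta'/\zeta$ against the Mellin transform of the weight, then move the line of integration and collect the residues at $w=0$, at the zeros, at the trivial zeros, and at the pole of $\zeta$. The only differences are cosmetic: the paper keeps Selberg's variable $z$ (so the shift appearing inside $\zeta'/\zeta$ is $z/\log X$) and places the initial contour at $\Re z=\max\{(2-\sigma)\log X,1\}$, which makes the formula valid for every admissible $s$ at once and avoids your separate analytic-continuation step; the paper is also terse about the vanishing of the shifted integral and the convergence over zeros, whereas you spell these out.

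One slip to watch when you execute the plan: the substitution $w=-z/\log X$ reverses the orientation of the vertical line, so your stated Mellin--Barnes representation is off by a sign. With the convention that $\int_{(-\delta)}$ is traversed upward one actually has
\[
\frac{1}{2\pi i}\int_{(-\delta)}\frac{n^{w}}{w}\,\tilde u(1-w\log X)\,dw=-w_{u,X}(n),
\]
as one checks by inserting $\tilde u(1-w\log X)=\int u(x)(n/x^{\log X})^{w}\,dx/n^{w}$ and using the Perron kernel on $\Re w<0$. If you carry your formula through as written, the final identity comes out with $+\sum_{n}\Lambda(n)n^{-s}w_{u,X}(n)$ instead of the correct $-\sum_{n}$; inserting the missing minus sign (or, equivalently, keeping the $z$-variable as the paper does) fixes this.
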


		\begin{proof}
			By Mellin's transform, we find that
			\begin{align}
				\sum_{n = 1}^{\infty}\frac{\Lam(n)}{n^{s}}w_{u, X}(n)
				&= \sum_{n = 1}^{\infty}\frac{\Lam(n)}{n^{s}}\int_{\log{X} - \i \infty}^{\log{X} + \i \infty}\frac{\tilde{u}(z + 1)}{z} n^{-z / \log{X}}dz\\
				\label{pFLDZ1}
				&= -\int_{c - \i \infty}^{c + \i \infty}\frac{\zeta'}{\zeta}\l(s + \frac{z}{\log{X}}\r)\frac{\tilde{u}(z + 1)}{z} n^{-z / \log{X}}dz
			\end{align}
			with $c = \max\{(2 - \s)\log{X}, 1\}$.
			Note that the absolute convergence of the above integrals is guaranteed by
			$\tilde{u}(z) \ll |z|^{-1}$ as $|\Im z| \rightarrow + \infty$.
			By the residue theorem, \cref{pFLDZ1} is equal to
			\begin{multline}
				-\frac{\zeta'}{\zeta}(s) + \sum_{\rho}\frac{1}{s - \rho}\tilde{u}(1 - (s - \rho) \log{X})\\
				+ \sum_{n = 1}^{\infty}\frac{1}{s + 2n}\tilde{u}(1 - (s + 2n) \log{X}) - \frac{1}{s - 1}\tilde{u}(1 - (s - 1) \log{X}),
			\end{multline}
			which completes the proof of this lemma.
		\end{proof}

		\begin{lemma}	\label{lem_h_zf}
			Let $\theta \in \RR$, and let $K$ be large.
			For any $t \in \RR$, $X \geq 3$, we have
			\begin{align}
				\sum_{\rho}\frac{\s_{\theta, K}(t, X) - \frac{1}{2}}{(\s_{\theta, K}(t, X) - \b)^{2} + (t - \gamma)^{2}}
				\leq \frac{1 + h^{2}}{1 - h^{2}}\sum_{\rho}\frac{\s_{\theta, K}(t, X) - \b}{(\s_{\theta, K}(t, X) - \b)^{2} + (t - \gamma)^{2}}.
			\end{align}
		\end{lemma}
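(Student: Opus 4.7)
The plan is to reduce the asserted inequality to a pointwise bound on each zero by exploiting the functional-equation involution $\rho = \b + \i\gamma \longleftrightarrow 1 - \ol{\rho} = 1 - \b + \i\gamma$ on the nontrivial zeros of $\zeta$. First I would rewrite the inequality in the equivalent form
\[
\sum_{\rho}\frac{\b - \tfrac{1}{2}}{D_\rho} \leq \frac{2h^{2}}{1+h^{2}}\l(\s - \tfrac{1}{2}\r)\sum_{\rho}\frac{1}{D_\rho},
\]
where $\s = \s_{\theta,K}(t,X)$ and $D_\rho = (\s - \b)^{2} + (t - \gamma)^{2}$. Introducing the abbreviations $u = \s - \tfrac{1}{2}$, $v = \b - \tfrac{1}{2}$, $c = t - \gamma$, a direct calculation gives $D_\rho + D_{1-\ol{\rho}} = 2(u^{2}+v^{2}+c^{2})$ and $D_{1-\ol{\rho}} - D_\rho = 4uv$. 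Grouping the two contributions in each sum, the whole inequality follows pair-by-pair from the single pointwise bound
\[
v^{2} \leq h^{2}(u^{2} + c^{2}), \quad \text{equivalently} \quad \l| \b - \tfrac{1}{2} \r| \leq h\sqrt{(\s - \tfrac{1}{2})^{2} + (t - \gamma)^{2}}.
\]
Zeros on the critical line ($v = 0$) contribute $0$ to the left-hand side and so are trivially fine.

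Next I would verify this pointwise bound for every nontrivial zero via a case analysis based on the definition \cref{def_s_X} of $\s_{\theta,K}(t,X)$. If $|t - \gamma| \leq KX^{|\b - 1/2|}/\log X$, then either $\rho$ itself (when $\b \geq \tfrac{1}{2}$) or its functional-equation partner (when $\b < \tfrac{1}{2}$, since $|(1-\b) - \tfrac{1}{2}| = |\b - \tfrac{1}{2}|$) enters the $\max$ defining $\s$, whence $\s - \tfrac{1}{2} \geq \tfrac{1}{h}|\b - \tfrac{1}{2}|$ and the bound follows at once from $|v| \leq hu \leq h\sqrt{u^{2} + c^{2}}$. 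In the opposite case $|t - \gamma| > KX^{|\b - 1/2|}/\log X$, it suffices to prove $|\b - \tfrac{1}{2}| \leq h|t - \gamma|$; setting $z = |\b - \tfrac{1}{2}|\log X \in [0, \tfrac{1}{2}\log X]$ this reduces to the elementary inequality $hKe^{z} \geq z$ for $z \geq 0$, which holds whenever $hK \geq 1/e$, a condition guaranteed by the hypothesis that $K$ is large together with the standing assumption $h > \tfrac{2}{5}$.

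The main obstacle should be the algebraic bookkeeping in the pairing step: one must carefully check that $\rho \mapsto 1 - \ol{\rho}$ is indeed an involution on the multiset of nontrivial zeros, handle zeros with $\b < \tfrac{1}{2}$ through their partners (which are the ones that actually enter the $\max$ in \cref{def_s_X}), and treat on-critical-line zeros separately so that the pair-by-pair comparison is clean. Beyond this, once the key geometric bound $|v| \leq h\sqrt{u^{2} + c^{2}}$ is secured, the remaining work is elementary rearrangement, with absolute convergence of the sums guaranteed by the classical estimate $N(T+1) - N(T) \ll \log(|T|+3)$.
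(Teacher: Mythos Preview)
Your proposal is correct and follows essentially the same approach as the paper: both arguments pair each zero $\rho$ with its functional-equation reflection $1-\ol{\rho}$, reduce the global inequality to the pointwise bound $(\b-\tfrac12)^2 \leq h^2\bigl((\s-\tfrac12)^2+(t-\gamma)^2\bigr)$, and verify that bound by a dichotomy based on the definition of $\s_{\theta,K}(t,X)$. The only cosmetic differences are that the paper splits cases on whether $|\b-\tfrac12|\le h(\s-\tfrac12)$ rather than on whether $\rho$ satisfies the defining condition $|t-\gamma|\le KX^{|\b-1/2|}/\log X$, and in the ``far'' case it records the stronger bound $|t-\gamma|>K|\b-\tfrac12|$ (since $e^z>z$) in place of your $|t-\gamma|\ge h^{-1}|\b-\tfrac12|$; your isolation of the equivalent form $v^2\le h^2(u^2+c^2)$ is slightly cleaner, but the substance is identical.
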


		\begin{proof}
			We use Selberg's method (cf. pp.\ 235--236 in \cite{SCR}).
			By the symmetry of nontrivial zeros, we have
			\begin{align}
				&\sum_{\rho}\frac{\s_{\theta, K}(t, X) - \b}{(\s_{\theta, K}(t, X) - \b)^{2} + (t - \gamma)^{2}}\\
				&= \frac{1}{2}\sum_{\rho}\l( \frac{\s_{\theta, K}(t, X) - \b}{(\s_{\theta, K}(t, X) - \b)^{2} + (t - \gamma)^{2}}
				+ \frac{\s_{\theta, K}(t, X) - (1 - \b)}{(\s_{\theta, K}(t, X) - (1 - \b))^{2} + (t - \gamma)^{2}} \r)\\
				&= \sum_{\rho}\frac{(\s_{\theta, K}(t, X) - \tfrac{1}{2})\l\{(\s_{\theta, K}(t, X) - \frac{1}{2})^{2} - (\b - \frac{1}{2})^{2} + (t - \gamma)^{2}\r\}}{
					\{(\s_{\theta, K}(t, X) - \b)^{2} + (t - \gamma)^{2}\}\{(\s_{\theta, K}(t, X) - (1 - \b))^{2} + (t - \gamma)^{2}\}}.
			\end{align}
			When $|\b - \frac{1}{2}| \leq h (\s_{\theta, K}(t, X) - \frac{1}{2})$, we find by simple calculations that
			\begin{align}
				&(\s_{\theta, K}(t, X) - \tfrac{1}{2})^{2} - (\b - \tfrac{1}{2})^{2} + (t - \gamma)^{2}\\
				&\geq \frac{1 - h^{2}}{1 + h^{2}}\l\{(\s_{\theta, K}(t, X) - \tfrac{1}{2})^{2} + (\b - \tfrac{1}{2})^{2}\r\} + (t - \gamma)^{2}\\
				&\geq \frac{1 - h^{2}}{2(1 + h^{2})}\l\{2(\s_{\theta, K}(t, X) - \tfrac{1}{2})^{2} + 2(\b - \tfrac{1}{2})^{2} + 2(t - \gamma)^{2} \r\}\\
				&= \frac{1 - h^{2}}{2(1 + h^{2})}\l\{ (\s_{\theta, K}(t, X) - \b)^{2} + (\s_{\theta, K}(t, X) - 1 + \b)^{2} + 2(t - \gamma)^{2} \r\}.
			\end{align}
			When $|\b - \frac{1}{2}| > h (\s_{\theta, K}(t, X) - \frac{1}{2})$, it follows from the definition of $\s_{\theta, K}(t, X)$ that
			\begin{align}
				|t - \gamma| > K \frac{X^{|\b - 1/2|}}{\log{X}} > K |\b - \tfrac{1}{2}|.
			\end{align}
			Therefore, we also find by a simple calculation that
			\begin{align}
				&(\s_{\theta, K}(t, X) - \tfrac{1}{2})^{2} - (\b - \tfrac{1}{2})^{2} + (t - \gamma)^{2}\\
				&\geq \frac{1 - 2 / K^{2}}{2} \l\{ (\s_{\theta, K}(t, X) - \b)^{2} + (\s_{\theta, K}(t, X) - 1 + \b)^{2} + 2(t - \gamma)^{2} \r\}
			\end{align}
			when $|\b - \frac{1}{2}| > h (\s_{\theta, K}(t, X) - \frac{1}{2})$.
			Hence, we have
			\begin{align}
				&\sum_{\rho}\frac{\s_{\theta, K}(t, X) - \b}{(\s_{\theta, K}(t, X) - \b)^{2} + (t - \gamma)^{2}}\\
				&\geq \frac{1 - h^{2}}{2(1 + h^{2})}\sum_{\rho}\l(\frac{\s_{\theta, K}(t, X) - \tfrac{1}{2}}{
						(\s_{\theta, K}(t, X) - \b)^{2} + (t - \gamma)^{2}}
				+ \frac{\s_{\theta, K}(t, X) - \tfrac{1}{2}}{(\s_{\theta, K}(t, X) - (1 - \b))^{2} + (t - \gamma)^{2}}\r)\\
				&= \frac{1 - h^{2}}{1 + h^{2}}\sum_{\rho}\frac{\s_{\theta, K}(t, X) - \frac{1}{2}}{(\s_{\theta, K}(t, X) - \b)^{2} + (t - \gamma)^{2}},
			\end{align}
			which completes the proof of this lemma.
		\end{proof}

		In the following, we choose $u$ as
		\begin{align}
			u(x) = \l\{
			\begin{array}{cl}
				\frac{9\log{x} - 3}{x} & \text{if \; $e^{1/3} \leq x \leq e^{2/3}$,} \\%[3mm]
				\frac{9 - 9\log{x}}{x} & \text{if \; $e^{2/3} \leq x \leq e$,}       \\
				0                      & \text{otherwise.}
			\end{array}
			\r.
		\end{align}
		We then find that $\tilde{u}(1 - z) = 9(e^{-z/6} - e^{-z/2})^{2} / z^{2}$,
		and that $w_{u, X}(y)$ in \cref{FLDZ} coincides with $w_{X}(y)$ defined by \cref{def_w_X}.
		We should remark that our constant $A(h, \theta)$ does not depend on the choice of $u$ in our method
		as long as $u$ is supported on $[c, e]$	and belongs to $C^{2}([c, e])$ for any fixed $c \in (0, e)$.

		\begin{lemma}	\label{INE_MZ_NZ}
			Let $\theta \in \RR$, and let $K$ be large.
			For $t \in \RR$, $X \geq 3$, $\s \geq \s_{\theta, K}(t, X)$, we have
			\begin{align}
				&\bigg|\sum_{\rho}\frac{1}{s - \rho}\tilde{u}(1 - (s - \rho)\log{X})\bigg|
				\ll X^{-(\s - \s_{\theta, K}(t, X))/3}K^{-4}
				\sum_{\rho}\frac{\s_{\theta, K}(t, X) - \frac{1}{2}}{(\s_{\theta, K}(t, X) - \b)^{2} + (t - \gamma)^{2}}.
			\end{align}
			This implicit constant is absolute.
		\end{lemma}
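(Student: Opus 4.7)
The plan is a term-by-term estimate starting from the closed form $\tilde u(1-z)=9(e^{-z/6}-e^{-z/2})^{2}/z^{2}$. Since $e^{-z/6}-e^{-z/2}=2e^{-z/3}\sinh(z/6)$, a direct computation yields the pointwise bound
\begin{align*}
|\tilde u(1-z)|\leq \begin{cases}36\,e^{-\Re z/3}/|z|^{2},& \Re z\geq 0,\\ 36\,e^{-\Re z}/|z|^{2},& \Re z<0.\end{cases}
\end{align*}
Applied with $z=(s-\rho)\log X$ in the regime $\sigma\geq\beta$, this gives
\begin{align*}
\frac{|\tilde u(1-(s-\rho)\log X)|}{|s-\rho|}\ll \frac{X^{-(\sigma-\beta)/3}}{|s-\rho|^{3}(\log X)^{2}}.
\end{align*}
Writing $X^{-(\sigma-\beta)/3}=X^{-(\sigma-\sigma_{0})/3}\cdot X^{-(\sigma_{0}-\beta)/3}$ with $\sigma_{0}:=\sigma_{\theta,K}(t,X)$ already isolates the factor $X^{-(\sigma-\sigma_{0})/3}$ expected on the right-hand side of the claim.

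Next I would partition the zeros into two subsets according to the window in the definition of $\sigma_{0}$: a zero is \emph{near} if $|t-\gamma|\leq KX^{|\beta-1/2|}/\log X$ and \emph{far} otherwise. For a near zero with $\beta\geq 1/2$ the maximum defining $\sigma_{0}$ forces $\sigma_{0}-1/2\geq(\beta-1/2)/h$, whence $\sigma_{0}-\beta\geq(1-h)(\sigma_{0}-1/2)$ and, since $\sigma\geq\sigma_{0}\geq\beta$, also $|s-\rho|\geq\sigma-\beta\gg K/\log X$. For near zeros with $\beta<1/2$ the built-in lower bound $\sigma_{0}-1/2\geq K/(h\log X)$ gives the same conclusion. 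For a far zero the window condition directly yields $|s-\rho|\log X>K X^{|\beta-1/2|}\geq K$. In every case $|z|=|s-\rho|\log X\gg K$, so the $1/|z|^{2}$ factor in the bound for $|\tilde u|$ already contributes an $O(K^{-2})$ saving. To produce the full $K^{-4}$, I would apply the inequality $|s-\rho|\log X\gg K X^{|\beta-1/2|}$ a second time, rewriting $|s-\rho|^{3}(\log X)^{2}=|s-\rho|^{-1}(\log X)^{-2}\cdot(|s-\rho|\log X)^{4}$ and bounding the fourth-power factor below by $K^{4}X^{4|\beta-1/2|}$.

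Combining these ingredients with the elementary comparison $|s-\rho|\geq|s_{0}-\rho|$ (which follows from $\sigma\geq\sigma_{0}\geq\beta$ in every case save possibly far zeros with $\beta$ very close to $1$), I would reduce the lemma to the termwise inequality
\begin{align*}
\frac{X^{-(\sigma_{0}-\beta)/3}}{|s-\rho|^{3}(\log X)^{2}}\ll K^{-4}\,\frac{\sigma_{0}-1/2}{(\sigma_{0}-\beta)^{2}+(t-\gamma)^{2}},
\end{align*}
and sum over $\rho$ to finish.

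The main obstacle will be making this termwise comparison uniform in $\beta\in(0,1)$. Two configurations are delicate: far zeros with $\beta>\sigma$, where the bound on $|\tilde u|$ picks up a factor $X^{\beta-\sigma}$ that must be absorbed using the strong window bound $|s-\rho|\log X>K X^{\beta-1/2}$; and near zeros with $\beta$ just above $1/2$, where $\sigma_{0}-\beta$ is small and one has to lean on the enforced lower bound $\sigma_{0}-1/2\geq K/(h\log X)$, which supplies the saving $X^{-(\sigma_{0}-1/2)/3}\leq e^{-K/(3h)}$ — more than enough to balance the small denominator on the right-hand side once $K$ is large.
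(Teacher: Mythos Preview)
Your overall strategy coincides with the paper's: bound each term via the closed form $\tilde u(1-z)=9(e^{-z/6}-e^{-z/2})^2/z^2$, split the zeros according to their position relative to the window defining $\sigma_0:=\sigma_{\theta,K}(t,X)$, and manufacture the $K^{-4}$ from a combination of the $|z|^{-2}$ decay and the exponential factor $X^{-c(\sigma_0-1/2)}\le e^{-cK/h}$. The paper splits on the condition $\beta-\tfrac12\le h(\sigma_0-\tfrac12)$ rather than on near/far, but since every near zero satisfies $|\beta-\tfrac12|\le h(\sigma_0-\tfrac12)$ the two partitions are closely related.

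There is, however, a genuine gap in your extraction of $K^{-4}$ for far zeros. The rewriting
\[
|s-\rho|^{3}(\log X)^{2}=|s-\rho|^{-1}(\log X)^{-2}\cdot\bigl(|s-\rho|\log X\bigr)^{4}
\]
followed by $(|s-\rho|\log X)^4\ge K^4X^{4|\beta-1/2|}$ leaves the factor $|s-\rho|(\log X)^2$ in the \emph{numerator} of your bound. For a far zero $|t-\gamma|$ is only bounded \emph{below} by $KX^{|\beta-1/2|}/\log X$; it can be arbitrarily large, so this factor is uncontrolled and the resulting estimate cannot be compared to $(\sigma_0-\tfrac12)/|s_0-\rho|^2$. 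Relatedly, the comparison $|s-\rho|\ge|s_0-\rho|$ fails whenever $\beta>\sigma_0$, and this occurs for any far zero with $\beta-\tfrac12>h(\sigma_0-\tfrac12)$ --- not only for $\beta$ ``very close to $1$''.

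The paper avoids both issues by extracting just \emph{one} factor of $1/|t-\gamma|$ from the cube $|s-\rho|^{-3}$, then using the window bound $|t-\gamma|>KX^{\beta-1/2}/\log X$ once to pick up $K^{-1}X^{-(\beta-1/2)}$, and using $(t-\gamma)^2\ge(1+O(K^{-2}))^{-1}|s_0-\rho|^2$ (a consequence of $|t-\gamma|>K|\beta-\tfrac12|$) to pass from $|s-\rho|$ to $|s_0-\rho|$. The residual exponential $X^{-(\sigma-\beta)/3}X^{-(\beta-1/2)}\le X^{-(\sigma-1/2)/3}\le e^{-K/(3h)}$ then supplies the remaining powers of $K$. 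Your near-zero analysis (which is essentially the paper's Case~1) is fine; the repair needed is to treat far zeros by peeling off one power of $|t-\gamma|$ rather than four powers of $|s-\rho|\log X$.
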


		\begin{proof}
			From the present definition of $u$, we have
			\begin{align}
				&\bigg|\sum_{\rho}\frac{1}{s - \rho}\tilde{u}(1 - (s - \rho)\log{X})\bigg|
				\leq \frac{9}{(\log{X})^{2}}\sum_{\rho}\frac{(X^{-(\s - \b)/6} + X^{-(\s - \b)/2})^{2}}{((\s - \b)^{2} + (t - \gamma)^{2})^{3/2}}.
			\end{align}
			When $\b - \frac{1}{2} \leq h (\s_{\theta, K}(t, X) - \frac{1}{2})$, we find that
			\begin{align}
				&\frac{(X^{-(\s - \b)/6} + X^{-(\s - \b)/2})^{2}}{((\s - \b)^{2} + (t - \gamma)^{2})^{3/2}}
				\leq \frac{4X^{-(\s - \b)/3}}{((\s_{\theta, K}(t, X) - \b)^{2} + (t - \gamma)^{2})^{3/2}}\\
				&\leq \frac{4X^{-(\s - \s_{\theta, K}(t, X))/3}X^{-(\s_{\theta, K}(t, X) - \b)/3}}{
					(\s_{\theta, K}(t, X) - \b)\l\{(\s_{\theta, K}(t, X) - \b)^{2} + (t - \gamma)^{2}\r\}}\\
				&\leq \frac{4 e^{-(1 - h) K / 3 h} X^{-(\s - \s_{\theta, K}(t, X))/3}}{(1 - h)(\s_{\theta, K}(t, X) - \frac{1}{2})
					\{ (\s_{\theta, K}(t, X) - \b)^{2} + (t - \gamma)^{2} \}}\\
				&\leq X^{-(\s - \s_{\theta, K}(t, X))/3} \frac{(\log{X})^{2}}{K^{4}}\frac{\s_{\theta, K}(t, X) - \frac{1}{2}}{(\s_{\theta, K}(t, X) - \b)^{2} + (t - \gamma)^{2}}
			\end{align}
			for large $K$.
			When $\b - \frac{1}{2} > h (\s_{\theta, K}(t, X) - \frac{1}{2})$, it follows from the definition of $\s_{\theta, K}(t, X)$ that
			\begin{align}
				|t - \gamma| > K \frac{X^{\b - 1/2}}{\log{X}} > K (\b - \tfrac{1}{2}) > \frac{K h}{1 + h}|\s_{\theta, K}(t, X) - \b|,
			\end{align}
			and so
			\begin{align}
				(t - \gamma)^{2} > \l( 1 + \frac{(1 + h)^{2}}{K^{2} h^{2}} \r)^{-1}\l\{ (\s_{\theta, K}(t, X) - \b)^{2} + (t - \gamma)^{2} \r\}.
			\end{align}
			Therefore, we find that
			\begin{align}
				&\frac{(X^{-(\s - \b)/6} + X^{-(\s - \b)/2})^{2}}{((\s - \b)^{2} + (t - \gamma)^{2})^{3/2}}\\
				&\leq \frac{1}{|t - \gamma|}\l( 1 + \frac{(1 + h)^{2}}{K^{2} h^{2}} \r)
				\frac{(X^{-(\s - \b)/6} + X^{-(\s - \b)/2})^{2}}{(\s_{\theta, K}(t, X) - \b)^{2} + (t - \gamma)^{2}}\\
				&\leq \frac{2 \log{X}}{K X^{\b - 1/2}}\frac{(X^{-(\s - \b)/6} + X^{-(\s - \b)/2})^{2}}{
					(\s_{\theta, K}(t, X) - \b)^{2} + (t - \gamma)^{2}}\\
				&\leq 2(X^{-(\s + 2\b - \frac{3}{2})/6} + X^{-(\s - \frac{1}{2})/2})^{2}
				\frac{h (\log{X})^{2}}{K^{2}}\frac{\s_{\theta, K}(t, X) - \frac{1}{2}}{(\s_{\theta, K}(t, X) - \b)^{2} + (t - \gamma)^{2}}\\
				&\ll X^{-(\s - \s_{\theta, K}(t, X))/3}\frac{(\log{X})^{2}}{K^{4}}\frac{\s_{\theta, K}(t, X) - \frac{1}{2}}{(\s_{\theta, K}(t, X) - \b)^{2} + (t - \gamma)^{2}}
			\end{align}
			for large $K$.
			Combing these inequalities, we have
			\begin{align}
				&\bigg|\sum_{\rho}\frac{1}{s - \rho}\tilde{u}(1 - (s - \rho)\log{X})\bigg|
				\ll X^{-(\s - \s_{\theta, K}(t, X))/3}\frac{1}{K^{4}} \frac{\s_{\theta, K}(t, X) - \frac{1}{2}}{
					(\s_{\theta, K}(t, X) - \b)^{2} + (t - \gamma)^{2}},
			\end{align}
			which completes the proof of this lemma.
		\end{proof}

		\begin{lemma}	\label{ERZDP}
			Let $\theta \in \RR$, and let $K$ be large.
			For any $|t| \geq 3$, $3 \leq X \leq |t|^{6}$, we have
			\begin{align}	\label{ERZDP1}
				&\sum_{\rho}\frac{\s_{\theta, K}(t, X) - \b}{(\s_{\theta, K}(t, X) - \b)^{2} + (t - \gamma)^{2}}\\
				&= \l(1 + O\l( \frac{1}{K^{3}} \r)\r)\l( \frac{1}{2} \log{|t|} - \Re \sum_{p \leq X}\frac{w_{X}(p) \log{p}}{p^{s_{\theta, K}(t, X)}} \r) + O(\log{X}).
			\end{align}
		\end{lemma}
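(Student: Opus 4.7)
The plan is to combine \cref{FLDZ} with the classical Hadamard-product/Stirling expansion of $\Re \frac{\zeta'}{\zeta}(s)$, obtaining a first identity for the target quantity
\begin{align}
S \ceq \sum_{\rho}\frac{\s_{\theta, K}(t, X) - \b}{(\s_{\theta, K}(t, X) - \b)^{2} + (t - \g)^{2}}
\end{align}
in terms of the prime Dirichlet polynomial, and then to use \cref{lem_h_zf} together with \cref{INE_MZ_NZ} to absorb a residual weighted zero sum back into $S$ itself.

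Concretely, I take the real part of \cref{FLDZ1} at $s = s_{\theta, K}(t, X)$ and split the Dirichlet polynomial $\sum_{n}\Lam(n) w_{X}(n)/n^{s}$ into its prime part and a prime-power tail. Since $\s_{\theta, K}(t, X) \geq \frac{1}{2} + \frac{K}{h\log X}$ and $w_{X}(p^{k}) \leq 1$, a standard estimate shows the tail is $\ll \log X$, while the trivial-zero series $\sum_{n \geq 1}\tilde{u}(1 - (s + 2n)\log X)/(s + 2n)$ and the $(s-1)$ term in \cref{FLDZ1} are $O(1)$ thanks to the exponential decay in $\tilde{u}(1 - z) = 9(e^{-z/6} - e^{-z/2})^{2}/z^{2}$. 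On the other hand, the Hadamard product together with Stirling's formula gives, for $|t| \geq 3$ and $\s$ in any bounded range,
\begin{align}
\Re \frac{\zeta'}{\zeta}(\s + \i t) = \sum_{\rho}\frac{\s - \b}{(\s - \b)^{2} + (t - \g)^{2}} - \frac{1}{2}\log|t| + O(1),
\end{align}
and $\s_{\theta, K}(t, X)$ lies in such a range. Equating the two expressions for $\Re \frac{\zeta'}{\zeta}(s_{\theta,K}(t, X))$ yields
\begin{align}
S = \frac{1}{2}\log|t| - \Re \sum_{p \leq X}\frac{w_{X}(p)\log p}{p^{s_{\theta, K}(t, X)}} + \Re \sum_{\rho}\frac{\tilde{u}(1 - (s - \rho)\log X)}{s - \rho} + O(\log X).
\end{align}

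It remains to bound the last sum. Applying \cref{INE_MZ_NZ} at $\s = \s_{\theta, K}(t, X)$, where the factor $X^{-(\s - \s_{\theta,K})/3}$ equals $1$, gives the bound $\ll K^{-4}\sum_{\rho}\frac{\s_{\theta, K} - 1/2}{(\s_{\theta, K} - \b)^{2} + (t - \g)^{2}}$, and \cref{lem_h_zf} further controls this by $\frac{1 + h^{2}}{1 - h^{2}}S = O(S)$ since $h < 3/4$. Thus $S(1 + O(K^{-4})) = \frac{1}{2}\log|t| - \Re \sum_{p \leq X} w_{X}(p) \log p / p^{s_{\theta, K}(t, X)} + O(\log X)$, and dividing by the main factor (using $O(K^{-4}) \subseteq O(K^{-3})$) gives \cref{ERZDP1}. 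The only delicate step is really the bookkeeping inside the first paragraph: verifying that the prime-power tail, the trivial-zero series, and the $(s-1)$ contribution all fit uniformly into $O(\log X)$ across the full range $3 \leq X \leq |t|^{6}$, and that $\s_{\theta, K}(t, X)$ stays bounded enough for the Hadamard/Stirling expansion to apply. Once these tail estimates are in place, the final algebraic inversion is immediate.
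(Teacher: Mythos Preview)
Your proposal is correct and follows essentially the same route as the paper: both compute $\Re\frac{\zeta'}{\zeta}(s_{\theta,K}(t,X))$ two ways---once via the Hadamard/Stirling expansion \cref{BFLDRZ} and once via \cref{FLDZ}---equate them, and then use \cref{INE_MZ_NZ} together with \cref{lem_h_zf} to absorb the residual weighted zero sum as an $O(K^{-3})$ multiple of $S$. The only cosmetic difference is that the paper packages the step through \cref{lem_h_zf} into the statement ``$|\omega|\leq K^{-3}$'' without citing that lemma explicitly, whereas you invoke it directly; your acknowledgement that the $(s-1)$ and trivial-zero terms need the restriction $X\leq |t|^{6}$ (not just ``exponential decay'') matches the paper's $O(X^{1/2}/|t|^{3}+1)$ bound.
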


		\begin{proof}
			We use the well known formula (see (2.10) in \cite{MV})
			\begin{align}	\label{BFLDRZ}
				\frac{\zeta'}{\zeta}(s)
				= \sum_{\rho}\l( \frac{1}{s - \rho} + \frac{1}{\rho} \r) - \frac{1}{2} \log{|t|} + O(1)
			\end{align}
			for $0 \leq \s \leq 2$, $|t| \geq 1$ to obtain
			\begin{align}
				\Re\frac{\zeta'}{\zeta}(s_{\theta, K}(t, X))
				= \sum_{\rho}\frac{\s_{\theta, K}(t, X) - \b}{(\s_{\theta, K}(t, X) - \b)^{2} + (t - \gamma)^{2}} - \frac{1}{2}\log{|t|} + O(1).
			\end{align}
			On the other hand, we also deduce from the definition of $u$ and \cref{FLDZ,INE_MZ_NZ} that
			\begin{align}
				&\Re\frac{\zeta'}{\zeta}(s_{\theta, K}(t, X))\\
				&= -\Re\sum_{n \leq X}\frac{\Lam(n) w_{X}(n)}{n^{s_{\theta, K}(t, X)}}
				+ \omega\sum_{\rho}\frac{\s_{\theta, K}(t, X) - \b}{(\s_{\theta, K}(t, X) - \b)^{2} + (t - \gamma)^{2}}
				+ O\l( \frac{X^{1/2}}{|t|^{3}} + 1 \r)
			\end{align}
			for some $\omega = \omega(t, X, \theta, K) \in \RR$ with $|\omega| \leq K^{-3}$ for any large $K$.
			Combining these two formulas, we obtain
			\begin{align}
				&\l( 1 - \omega \r)\sum_{\rho}\frac{\s_{\theta, K}(t, X) - \b}{(\s_{\theta, K}(t, X) - \b)^{2} + (t - \gamma)^{2}}
				= \frac{1}{2}\log{|t|} - \Re\sum_{n \leq X}\frac{\Lam(n) w_{X}(n)}{n^{s_{\theta, K}(t, X)}} + O\l( 1 \r).
			\end{align}
			Moreover, we find, using the prime number theorem and partial summation, that
			\begin{align}
				\sum_{n \leq X}\frac{\Lam(n) w_{X}(n)}{n^{s_{\theta, K}(t, X)}}
				= \sum_{p \leq X}\frac{w_{X}(p) \log{p}}{p^{s_{\theta, K}(t, X)}} + O(\log{X}).
			\end{align}
			Hence, we obtain
			\begin{align}
				&\sum_{\rho}\frac{\s_{\theta, K}(t, X) - \b}{(\s_{\theta, K}(t, X) - \b)^{2} + (t - \gamma)^{2}}\\
				&= \l( 1 + O\l( \frac{1}{K^{3}} \r) \r)\l( \frac{1}{2}\log{|t|} - \Re\sum_{n \leq X}\frac{\Lam(n) w_{X}(n)}{n^{s_{\theta, K}(t, X)}} \r)
				+ O\l( \log{X} \r),
			\end{align}
			which completes the proof of \cref{ERZDP}.
		\end{proof}

	\subsection{Proof of \cref{AFLZ}}
		We start from the formula
		\begin{align}
			\log\zeta(\tfrac{1}{2} + \i t)
			&= \int_{+\infty}^{\frac{1}{2}}\frac{\zeta'}{\zeta}(\s + \i t)d\s\\
			&=\int_{+\infty}^{\s_{\theta, K}(t, X)}\frac{\zeta'}{\zeta}(\s + \i t)d\s
			+ (\tfrac{1}{2} - \s_{\theta, K}(t, X))\frac{\zeta'}{\zeta}(\s_{\theta, K}(t, X) + \i t)\\
			&\quad+ \int_{\s_{\theta, K}(t, X)}^{\frac{1}{2}}\l(\frac{\zeta'}{\zeta}(\s + \i t) - \frac{\zeta'}{\zeta}(\s_{\theta, K}(t, X) + \i t) \r)d\s\\
			&\eqc I_{1} + I_{2} + I_{3},
		\end{align}
		say.
		Using \cref{FLDZ,lem_h_zf,INE_MZ_NZ} and observing the present definition of $u$, we obtain
		\begin{align}
			I_{1} = \sum_{2 \leq n \leq X}\frac{\Lam(n) w_{X}(n)}{n^{s_{\theta, K}(t, X)} \log{n}}
			+ \frac{\Delta_{1}}{\log{X}} \sum_{\rho}\frac{\s_{\theta, K}(t, X) - \b}{(\s_{\theta, K}(t, X) - \b)^{2} + (t - \gamma)^{2}}
			+ O\l( \frac{X^{1/2}}{|t|^{3} (\log{X})^{3}} \r),
		\end{align}
		and
		\begin{align}
			I_{2}
			&= (\s_{\theta, K}(t, X) - \tfrac{1}{2})\l(\sum_{n \leq X}\frac{\Lam(n) w_{X}(n)}{n^{s_{\theta, K}(t, X)}}
			+ \Delta_{2}\sum_{\rho}\frac{\s_{\theta, K}(t, X) - \b}{(\s_{\theta, K}(t, X) - \b)^{2} + (t - \gamma)^{2}}\r)\\
			&\qquad + O\l( (\s_{\theta, K}(t, X) - \tfrac{1}{2})\frac{X^{1/2}}{|t|^{3} (\log{X})^{2}} \r)
		\end{align}
		for some $\Delta_{i} = \Delta_{i}(t, X, \theta, K) \in \CC$ with $|\Delta_{i}| \leq K^{-2}$ for any large $K$.
		It follows from \cref{BFLDRZ} that
		\begin{align}
			\label{Eq:I3zero}
			I_{3}
			&= \sum_{\rho}\int_{\s_{\theta, K}(t, X)}^{\frac{1}{2}}\l( \frac{1}{\s + \i t - \rho} - \frac{1}{\s_{\theta, K}(t, X) + \i t - \rho} \r)d\s
			+ O(1).
		\end{align}

		Next, we consider the real part of $I_{3}$.
		We find that
		\begin{align}
			&\Re\int_{\s_{\theta, K}(t, X)}^{\frac{1}{2}}\l( \frac{1}{\s + \i t - \rho} - \frac{1}{\s_{\theta, K}(t, X) + \i t - \rho} \r)d\s\\
			&= -\frac{1}{2}\log\l( \frac{(\s_{\theta, K}(t, X) - \b)^2 + (t - \gamma)^{2}}{(\b - \frac{1}{2})^{2} + (t - \gamma)^{2}} \r)
			+ \frac{(\s_{\theta, K}(t, X)- \frac{1}{2})(\s_{\theta, K}(t, X) - \b)}{(\s_{\theta, K}(t, X) - \b)^{2} + (t - \gamma)^{2}}.
		\end{align}
		Therefore, we have
		\begin{align}
			\label{Eq_Y_tiY}
			\Re I_{3}
			&= -Y_{\theta, K}(t, X) + \tilde{Y}_{\theta, K}(t, X)\\
			&\quad+ (\s_{\theta, K}(t, X)- \tfrac{1}{2}) \sum_{\rho}\frac{\s_{\theta, K}(t, X) - \b}{(\s_{\theta, K}(t, X) - \b)^{2} + (t - \gamma)^{2}}
			+ O(1),
		\end{align}
		where $Y_{\theta, K}(t, X)$ is defined by \cref{def_Y_hK}, and
		\begin{align}
			\tilde{Y}_{\theta, K}(t, X)
			= \frac{1}{2}\sum_{\rho \not\in S}\log\l( \frac{(\b - \frac{1}{2})^{2} + (t - \gamma)^{2}}{(\s_{\theta, K}(t, X) - \b)^2 + (t - \gamma)^{2}} \r).
		\end{align}
		If $h \leq \frac{1}{\sqrt{2}}$ holds,
		$\tilde{Y}_{\theta, K}(t, X)$ is an empty sum
		since $|t - \gamma| > K X^{|\b - \frac{1}{2}|} / \log{X} > K |\b - \frac{1}{2}| \geq |\b - \frac{1}{2}|$ holds
		when $|\b - \frac{1}{2}| > \frac{1}{\sqrt{2}}(\s_{\theta, K}(t, X) - \frac{1}{2}) \geq h (\s_{\theta, K}(t, X) - \frac{1}{2})$.
		Hence, $\tilde{Y}_{\theta, K}(t, X)$ is zero identically when $h \leq \frac{1}{\sqrt{2}}$.

		We evaluate $\tilde{Y}_{\theta, K}(t, X)$ under the assumption $\frac{1}{\sqrt{2}} < h < 1$.
		By the symmetry of nontrivial zeros, $\tilde{Y}_{\theta, K}(t, X)$ is equal to
		\begin{align}
			&\frac{1}{4}\sum_{\rho \not\in S}\l\{\log\l( \frac{(\b - \frac{1}{2})^{2} + (t - \gamma)^{2}}{(\s_{\theta, K}(t, X) - \b)^2 + (t - \gamma)^{2}} \r)
			+ \log\l( \frac{((1 - \b) - \frac{1}{2})^{2} + (t - \gamma)^{2}}{(\s_{\theta, K}(t, X) - (1 - \b))^2 + (t - \gamma)^{2}} \r)\r\}\\
			&= \frac{1}{4}\sum_{\rho \not\in S}
			\log\l( 1 + \frac{(\s_{\theta, K}(t, X) - \frac{1}{2})^{2}
					\l\{ 2(\b - \frac{1}{2})^{2} - (\s_{\theta, K}(t, X) - \frac{1}{2})^{2} - 2(t - \gamma)^{2} \r\}}{
					\{ (\s_{\theta, K}(t, X) - \b)^2 + (t - \gamma)^{2} \}\{ (\s_{\theta, K}(t, X) - (1 - \b))^2 + (t - \gamma)^{2} \}} \r).
		\end{align}
		We see from the definition of $S$ that the right hand side is nonnegative.
		The inequality $|\b - \frac{1}{2}| > h (\s_{\theta, K}(t, X) - \frac{1}{2})$ yields the inequality $|t - \gamma| > |\b - \frac{1}{2}|$,
		so that also $\rho \in S$.
		Therefore, when $\rho \not\in S$, we have
		\begin{align}
			\label{Eq:pAFLZ3}
			\tfrac{1}{\sqrt{2}}(\s_{\theta, K}(t, X) - \tfrac{1}{2})
			\leq
			\sqrt{\tfrac{1}{2}(\s_{\theta, K}(t, X) - \tfrac{1}{2})^{2} + (t - \gamma)^{2}}
			<  |\b - \tfrac{1}{2}| \leq h (\s_{\theta, K}(t, X) - \tfrac{1}{2}).
		\end{align}
		This inequality leads to
		\begin{align}
			&2(\b - \tfrac{1}{2})^{2} - (\s_{\theta, K}(t, X) - \tfrac{1}{2})^{2} - 2(t - \gamma)^{2}\\
			&\leq \frac{2h^{2} - 1}{2(h^{2} + 1)}\l( 2(\s_{\theta, K}(t, X) - \tfrac{1}{2})^{2} + 2(\b - \tfrac{1}{2})^{2} + 2(t - \gamma)^{2} \r).
		\end{align}
		Hence, $\tilde{Y}_{\theta, K}(t, X)$ is
		\begin{multline}
			\leq \frac{1}{4}\sum_{\rho \not\in S}\log\biggl( 1
			+ \frac{2h^{2} - 1}{2(h^{2} + 1)}
			\biggl(\frac{(\s_{\theta, K}(t, X) - \frac{1}{2})^{2}}{(\s_{\theta, K}(t, X) - \b)^2 + (t - \gamma)^{2}}\\
				+ \frac{(\s_{\theta, K}(t, X) - \frac{1}{2})^{2}}{(\s_{\theta, K}(t, X) - (1 - \b))^2 + (t - \gamma)^{2}}\biggr) \biggr).
		\end{multline}
		It follows from \cref{Eq:pAFLZ3} that
		\begin{align}
			&\frac{(\s_{\theta, K}(t, X) - \frac{1}{2})^{2}}{(\s_{\theta, K}(t, X) - \b)^2 + (t - \gamma)^{2}}
			+ \frac{(\s_{\theta, K}(t, X) - \frac{1}{2})^{2}}{(\s_{\theta, K}(t, X) - (1 - \b))^2 + (t - \gamma)^{2}}\\
			&\geq \frac{(\s_{\theta, K}(t, X) - \frac{1}{2})^{2}}{
				(\s_{\theta, K}(t, X) - \b)^2 + (\b - \frac{1}{2})^{2} - \frac{1}{2}(\s_{\theta, K}(t, X) - \frac{1}{2})^{2}}\\
			&\qquadf\qquadt+ \frac{(\s_{\theta, K}(t, X) - \frac{1}{2})^{2}}{
				(\s_{\theta, K}(t, X) - (1 - \b))^2 + (\b - \frac{1}{2})^{2} - \frac{1}{2}(\s_{\theta, K}(t, X) - \frac{1}{2})^{2}}\\
			&= 4\frac{4(\s_{\theta, K}(t, X) - \frac{1}{2})^{2}(\b - \frac{1}{2})^{2} + (\s_{\theta, K}(t, X) - \frac{1}{2})^{4}}{
			\{4(\b - \frac{1}{2})^{2} - (\s_{\theta, K}(t, X) - \frac{1}{2})^{2}\}^{2}}.
		\end{align}
		The minimum of the function $f(x) = \frac{4a x + a^{2}}{(4x - a)^{2}}$
		in $\frac{1}{2}a \leq x \leq h^{2} a$ for every $a > 0$ is $f(h^{2} a) = \frac{4h^{2} + 1}{(4h^{2} - 1)^{2}}$,
		so we have
		\begin{align}
			4\frac{(\s_{\theta, K}(t, X) - \frac{1}{2})^{4} + 4(\s_{\theta, K}(t, X) - \frac{1}{2})^{2}(\b - \frac{1}{2})^{2}}{
			\{4(\b - \frac{1}{2})^{2} - (\s_{\theta, K}(t, X) - \frac{1}{2})^{2}\}^{2}}
			\geq 4\frac{4h^{2} + 1}{(4h^{2} - 1)^{2}}.
		\end{align}
		By these inequalities and the inequality $\log(1 + x) \leq \frac{\log(1 + x_{0})}{x_{0}}x$ for $x \geq x_{0} \geq 0$,
		$\tilde{Y}_{\theta, K}(t, X)$ is
		\begin{multline}
			\leq \frac{1}{16}
			\frac{(4h^{2} - 1)^{2}}{4h^{2} + 1}\log\l( 1 + 2\frac{(2h^{2} - 1)(4h^{2} + 1)}{(h^{2} + 1)(4h^{2} - 1)^{2}} \r)\\
			\times\sum_{\rho \not\in S}\l(\frac{(\s_{\theta, K}(t, X) - \frac{1}{2})^{2}}{(\s_{\theta, K}(t, X) - \b)^2 + (t - \gamma)^{2}}
			+ \frac{(\s_{\theta, K}(t, X) - \frac{1}{2})^{2}}{(\s_{\theta, K}(t, X) - (1 - \b))^2 + (t - \gamma)^{2}}\r).
		\end{multline}
		Moreover, using the symmetry of nontrivial zeros and \cref{lem_h_zf}, we have
		\begin{multline}
			\tilde{Y}_{\theta, K}(t, X)
			\leq \frac{(4h^{2} - 1)^{2}(1 + h^{2})}{8(4h^{2} + 1)(1 - h^{2})}\log\l( 1 + 2\frac{(2h^{2} - 1)(4h^{2} + 1)}{(h^{2} + 1)(4h^{2} - 1)^{2}} \r)\\
			\times(\s_{\theta, K}(t, X) - \tfrac{1}{2})
			\sum_{\rho}\frac{\s_{\theta, K}(t, X) - \b}{(\s_{\theta, K}(t, X) - \b)^2 + (t - \gamma)^{2}}
		\end{multline}
		when $\frac{1}{\sqrt{2}} < h \leq 1$.
		By this inequality and \cref{Eq_Y_tiY}, we obtain
		\begin{align}
			\Re I_{3}
			&= - Y_{\theta, K}(t, X)
			+ \Delta_{3}(\s_{\theta, K}(t, X) - \tfrac{1}{2})\sum_{\rho}\frac{\s_{\theta, K}(t, X) - \b}{(\s_{\theta, K}(t, X) - \b)^2 + (t - \gamma)^{2}}
			+ O(1)
		\end{align}
		for some $\Delta_{3} = \Delta_{3}(t, X, \theta, K) \in \RR$ such that $\Delta_{3} = 1$ if $0 < h \leq \frac{1}{\sqrt{2}}$, and
		\begin{align}
			&1 \leq \Delta_{3}
			\leq 1 + \frac{(4h^{2} - 1)^{2}(1 + h^{2})}{8(4h^{2} + 1)(1 - h^{2})}\log\l( 1 + 2\frac{(2h^{2} - 1)(4h^{2} + 1)}{(h^{2} + 1)(4h^{2} - 1)^{2}} \r)
		\end{align}
		if $\frac{1}{\sqrt{2}} < h < 1$.
		In particular, the inequality $|\Delta_{3}| \leq a(h)$ also holds with $a(h)$ the number defined by \cref{def_a_h}.

		Next, we evaluate the imaginary part of $I_{3}$.
		When $|\b - \frac{1}{2}| > h (\s_{\theta, K}(t, X) - \frac{1}{2})$, we find from the definition of $\s_{\theta, K}(t, X)$ that
		\begin{align}
			|t - \gamma| > K \frac{X^{|\b - 1/2|}}{\log{X}} > K|\b - \tfrac{1}{2}| > \frac{K h}{1 + h}|\s_{\theta, K}(t, X) - \b|,
		\end{align}
		and so
		\begin{align}
			&\bigg| \Im\int_{\s_{\theta, K}(t, X)}^{\frac{1}{2}}\l( \frac{1}{\s + \i t - \rho} - \frac{1}{\s_{\theta, K}(t, X) + \i t - \rho} \r)d\s\bigg|\\
			&= \frac{|t - \gamma|}{(\s_{\theta, K}(t, X) - \b)^{2} + (t - \gamma)^{2}}\bigg|\int_{\frac{1}{2}}^{\s_{\theta, K}(t, X)}
			\frac{(\s_{\theta, K}(t, X) - \s)(\s_{\theta, K}(t, X) + \s - 2\b)}{(\s - \b)^{2} + (t - \gamma)^{2}}d\s\bigg|\\
			&\leq \frac{1}{(\s_{\theta, K}(t, X) - \b)^{2} + (t - \gamma)^{2}}
			\frac{2(\s_{\theta, K}(t, X) - \tfrac{1}{2}) + 2|\b - \tfrac{1}{2}|}{|t - \gamma|}
			\int_{\frac{1}{2}}^{\s_{\theta, K}(t, X)}(\s_{\theta, K}(t, X) - \s)d\s\\
			&\leq \frac{1}{(\s_{\theta, K}(t, X) - \b)^{2} + (t - \gamma)^{2}}\frac{1}{K|\b - \frac{1}{2}|}
			(\s_{\theta, K}(t, X) - \tfrac{1}{2})^{2}(|\b - \tfrac{1}{2}| + |\s_{\theta, K}(t, X) - \tfrac{1}{2}|)\\
			&\leq \frac{1 + h}{h K}(\s_{\theta, K}(t, X) - \tfrac{1}{2})\frac{\s_{\theta, K}(t, X) - \frac{1}{2}}{
				(\s_{\theta, K}(t, X) - \b)^{2} + (t - \gamma)^{2}}\\
			&\leq \frac{1 + h^{2}}{h(1 - h)K}
			(\s_{\theta, K}(t, X) - \tfrac{1}{2})\frac{\s_{\theta, K}(t, X) - \b}{(\s_{\theta, K}(t, X) - \b)^{2} + (t - \gamma)^{2}}
		\end{align}
		by \cref{lem_h_zf}.
		We also see that
		\begin{align}
			&\Im\int_{\s_{\theta, K}(t, X)}^{\frac{1}{2}}\l( \frac{1}{\s + \i t - \rho} - \frac{1}{\s_{\theta, K}(t, X) + \i t - \rho} \r)d\s\\
			&= \frac{t - \gamma}{(\s_{\theta, K}(t, X) - \b)^{2} + (t - \gamma)^{2}}\int_{\frac{1}{2}}^{\s_{\theta, K}(t, X)}
			\frac{(\s_{\theta, K}(t, X) - \s)(\s_{\theta, K}(t, X) + \s - 2\b)}{(\s - \b)^{2} + (t - \gamma)^{2}}d\s,
		\end{align}
		and that
		\begin{align}
			&(t - \gamma)\int_{\frac{1}{2}}^{\s_{\theta, K}(t, X)}
			\frac{(\s_{\theta, K}(t, X) - \s)(\s_{\theta, K}(t, X) + \s - 2\b)}{(\s - \b)^{2} + (t - \gamma)^{2}}d\s\\
			&= \int_{(1/2 - \b) / (t - \gamma)}^{(\s_{\theta, K}(t, X) - \b) / (t - \gamma)}
			\frac{(\s_{\theta, K}(t, X) - u(t - \gamma) - \b)(\s_{\theta, K}(t, X) + u(t - \gamma) - \b)}{1 + u^{2}}du\\
			&= \arctan\l( \frac{\b - 1/2}{t - \gamma} \r)(\s_{\theta, K}(t, X) - \tfrac{1}{2})(\s_{\theta, K}(t, X) - 2\b + \tfrac{1}{2})\\
			&\qquadf \qquadf+ 2(t - \gamma)^{2}\int_{(1/2 - \b) / (t - \gamma)}^{(\s_{\theta, K}(t, X) - \b) / (t - \gamma)}u \arctan(u)du.
		\end{align}
		By \cref{Eq:I3zero}, these formulas, and the symmetry of nontrivial zeros, we have
		\begin{align}
			\label{ESIMI31}
			\Im I_{3}
			&= \sum_{\rho \in U}\biggl\{
			\tfrac{1}{2}(\s_{\theta, K}(t, X) - \tfrac{1}{2})\Xi_{1}(\rho) + (t - \gamma)^{2}\Xi_{2}(\rho) \biggr\}\\
			&\qquadf + \delta (\s_{\theta, K}(t, X) - \tfrac{1}{2})
			\sum_{\rho \notin U}\frac{\s_{\theta, K}(t, X) - \b}{(\s_{\theta, K}(t, X) - \b)^{2} + (t - \gamma)^{2}}
			+ O(1)
		\end{align}
		for some $\delta = \delta(t, X, \theta, K) \in \RR$ with $|\delta| \leq \frac{1 + h^{2}}{h(1 - h)K}$.
		Here, $U$ is the set of nontrivial zeros with $|\b - \frac{1}{2}| \leq h (\s_{\theta, K}(t, X) - \frac{1}{2})$,
		and $\Xi_{1}(\rho) = \Xi_{1}(\rho; t, X, \theta, K)$ and $\Xi_{2}(\rho) = \Xi_{2}(\rho; t, X, \theta, K)$ are defined by
		\begin{align}
			&\Xi_{1}(\rho)\\
			&\ceq \frac{(\s_{\theta, K}(t, X) - 2\b + \frac{1}{2})\arctan(\frac{\b - 1/2}{t - \gamma})}{(\s_{\theta, K}(t, X) - \b)^{2} + (t - \gamma)^{2}}
			+ \frac{(\s_{\theta, K}(t, X) - 2(1 - \b) + \frac{1}{2})\arctan(\frac{(1 - \b) - 1/2}{t - \gamma})}{
				(\s_{\theta, K}(t, X) - (1 - \b))^{2} + (t - \gamma)^{2}}\\
			\label{Rep_Xi1}
			&= \frac{-4(\b - \frac{1}{2})((\b - \frac{1}{2})^{2} + (t - \gamma)^{2})\arctan(\frac{\b - 1/2}{t - \gamma})}{
				\{ (\s_{\theta, K}(t, X) - \b)^{2} + (t - \gamma)^{2} \}\{ (\s_{\theta, K}(t, X) - (1 - \b))^{2} + (t - \gamma)^{2} \}},
		\end{align}
		and
		\begin{align}
			\Xi_{2}(\rho)
			\ceq \frac{\int_{(1/2 - \b) / (t - \gamma)}^{(\s_{\theta, K}(t, X) - \b) / (t - \gamma)}u \arctan(u)du}{(\s_{\theta, K}(t, X) - \b)^{2} + (t - \gamma)^{2}}
			+ \frac{\int_{(\b - 1/2) / (t - \gamma)}^{(\s_{\theta, K}(t, X) - (1 - \b)) / (t - \gamma)}u \arctan(u)du}{(\s_{\theta, K}(t, X) - (1 - \b))^{2} + (t - \gamma)^{2}}.
		\end{align}
		We see for $\rho \in U$ that $(t - \gamma)\Xi_{1}(\rho) \leq 0$ by \cref{Rep_Xi1}, and that $(t - \gamma)\Xi_{2}(\rho) \geq 0$.
		Therefore, $\Xi_{1}(\rho)$ and $\Xi_{2}(\rho)$ are of distinct signs.
		From this observation, we use the inequality $|a + b| \leq \max\{ |a|, |b| \}$ for $a$ and $b$ being of distinct signs to obtain
		\begin{align}
			&\bigg|\sum_{\rho \in U}\biggl\{\tfrac{1}{2}(\s_{\theta, K}(t, X) - \tfrac{1}{2})\Xi_{1}(\rho) + (t - \gamma)^{2}\Xi_{2}(\rho) \biggr\}\bigg|\\
			\label{ESIMI32}
			&\leq \max\biggl\{\sum_{\rho \in U}\tfrac{1}{2}(\s_{\theta, K}(t, X) - \tfrac{1}{2})|\Xi_{1}(\rho)|,
			\sum_{\rho \in U}(t - \gamma)^{2}|\Xi_{2}(\rho)| \biggr\}.
		\end{align}
		Simple calculations show for $\rho \in U$ that
		\begin{align}
			|\Xi_{1}(\rho)|
			&\leq \frac{\pi}{2}\frac{2|\b - \frac{1}{2}|(2(\s_{\theta, K}(t, X) - \frac{1}{2})^{2} + 2(\b - \frac{1}{2})^{2} + 2(t - \gamma)^{2})}{
				\{ (\s_{\theta, K}(t, X) - \b)^{2} + (t - \gamma)^{2} \}\{ (\s_{\theta, K}(t, X) - (1 - \b))^{2} + (t - \gamma)^{2} \}}\\
			&= \pi\l( \frac{|\b - \frac{1}{2}|}{(\s_{\theta, K}(t, X) - \b)^{2} + (t - \gamma)^{2}}
			+ \frac{|\b - \frac{1}{2}|}{(\s_{\theta, K}(t, X) - (1 - \b))^{2} + (t - \gamma)^{2}} \r)\\
			&\leq \pi h \l( \frac{\s_{\theta, K}(t, X) - \frac{1}{2}}{(\s_{\theta, K}(t, X) - \b)^{2} + (t - \gamma)^{2}}
			+ \frac{\s_{\theta, K}(t, X) - \frac{1}{2}}{(\s_{\theta, K}(t, X) - (1 - \b))^{2} + (t - \gamma)^{2}} \r),
		\end{align}
		and that
		\begin{align}
			&(t - \gamma)^{2}|\Xi_{2}(\rho)|\\
			&\leq\frac{\pi}{4}\l( \frac{(\s_{\theta, K}(t, X) - \b)^{2} \pm (\b - \frac{1}{2})^{2}}{(\s_{\theta, K}(t, X) - \b)^{2} + (t - \gamma)^{2}}
			+ \frac{(\s_{\theta, K}(t, X) - (1 - \b))^{2} \mp (\b - \frac{1}{2})^{2}}{(\s_{\theta, K}(t, X) - (1 - \b))^{2} + (t - \gamma)^{2}} \r)\\
			&\leq \frac{\pi}{2}(1 + h^{2})
			\frac{(\s_{\theta, K}(t, X) - \tfrac{1}{2})^{2}\{(\s_{\theta, K}(t, X) - \frac{1}{2})^{2} + (\b - \frac{1}{2})^{2} + (t - \gamma)^{2}\}}{
				\{ (\s_{\theta, K}(t, X) - \b)^{2} + (t - \gamma)^{2} \}
				\{ (\s_{\theta, K}(t, X) - (1 - \b))^{2} + (t - \gamma)^{2} \}}\\
			&= \frac{\pi}{4}(1 + h^{2})\l( \frac{(\s_{\theta, K}(t, X) - \tfrac{1}{2})^{2}}{(\s_{\theta, K}(t, X) - \b)^{2} + (t - \gamma)^{2}}
			+ \frac{(\s_{\theta, K}(t, X) - \tfrac{1}{2})^{2}}{(\s_{\theta, K}(t, X) - (1 - \b))^{2} + (t - \gamma)^{2}} \r),
		\end{align}
		where $\pm$ coincides with the sign of $\b - \frac{1}{2}$.
		Therefore, we have
		\begin{align}
			\sum_{\rho \in U}\tfrac{1}{2}(\s_{\theta, K}(t, X) - \tfrac{1}{2})|\Xi_{1}(\rho)|
			\leq \pi h (\s_{\theta, K}(t, X) - \tfrac{1}{2})
			\sum_{\rho}\frac{\s_{\theta, K}(t, X) - \frac{1}{2}}{(\s_{\theta, K}(t, X) - \b)^{2} + (t - \gamma)^{2}},
		\end{align}
		and
		\begin{align}
			&\sum_{\rho \in U}
			(t - \gamma)^{2}|\Xi_{2}(\rho)|
			\leq \frac{\pi}{2}(1 + h^{2})(\s_{\theta, K}(t, X) - \tfrac{1}{2})
			\sum_{\rho}\frac{\s_{\theta, K}(t, X) - \frac{1}{2}}{(\s_{\theta, K}(t, X) - \b)^{2} + (t - \gamma)^{2}}.
		\end{align}
		Applying these two inequalities and \cref{ESIMI32} to \cref{ESIMI31} and using \cref{lem_h_zf}, we obtain
		\begin{align}
			\Im I_{3}
			= \Delta_{4} (\s_{\theta, K}(t, X) - \tfrac{1}{2})
			\sum_{\rho}\frac{\s_{\theta, K}(t, X) - \b}{(\s_{\theta, K}(t, X) - \b)^{2} + (t - \gamma)^{2}} + O(1)
		\end{align}
		for some $\Delta_{4} = \Delta_{4}(t, X, \theta, K) \in \RR$ with
		$|\Delta_{4}| \leq \frac{\pi(1 + h^{2})^{2}}{2(1 - h^{2})} + \frac{1 + h^{2}}{h(1 - h)K}
			= b(h) + \frac{1 + h^{2}}{h(1 - h)K}$, where $b(h)$ is the constant defined by \cref{def_b_h}.

		From the above formulas, we have
		\begin{align}
			&\log{\zeta(\tfrac{1}{2} + \i t)}
			= \sum_{2 \leq n \leq X}\frac{\Lam(n) w_{X}(n)}{n^{s_{\theta, K}(t, X)}}\l( \frac{1}{\log{n}} + (\s_{\theta, K}(t, X) - \tfrac{1}{2}) \r)
			- Y_{\theta, K}(t, X)\\
			&+ \l( \frac{\Delta_{1}}{\log{X}} + (\Delta_{2} + \Delta_{3} + \i \Delta_{4})(\s_{\theta, K}(t, X) - \tfrac{1}{2}) \r)
			\sum_{\rho} \frac{\s_{\theta, K}(t, X) - \b}{(\s_{\theta, K}(t, X) - \b)^{2} + (t - \gamma)^{2}} + O(1).
		\end{align}
		Using the prime number theorem, partial summation, and the formula $\sum_{p \leq X}\frac{1}{p} = \log{\log{X}} + O(1)$, we see that
		\begin{align}
			&\sum_{2 \leq n \leq X}\frac{\Lam(n) w_{X}(n)}{n^{s_{\theta, K}(t, X)}}\l( \frac{1}{\log{n}} + (\s_{\theta, K}(t, X) - \tfrac{1}{2}) \r)
			= P_{\theta, K}(t, X)
			+ O((\s_{\theta, K}(t, X) - \tfrac{1}{2}) \log{X}),
		\end{align}
		where $P_{\theta, K}(t, X)$ is defined by \cref{def_PP}.
		Applying \cref{ERZDP} to the term of zeros, we obtain
		\begin{align}
			&\bigg|\Re e^{-\i \theta}\log{\zeta(\tfrac{1}{2} + \i t)}
			- \Re e^{-\i \theta}P_{\theta, K}(t, X)
			+ \Re e^{-\i\theta}Y_{\theta, K}(t, X) \bigg|\\
			&\leq \l(\frac{h}{A(h, \theta)} + \frac{9}{K} \r)(\s_{\theta, K}(t, X) - \tfrac{1}{2})
			\l( \frac{1}{2} \log{|t|} - \Re \sum_{p \leq X}\frac{w_{X}(p) \log{p}}{p^{s_{\theta, K}(t, X)}} + C_{1}\log{X} \r)
		\end{align}
		for large $K$ since
		$\displaystyle{\max_{2 / 5 \leq h \leq 3 / 4}}\frac{1 + h^{2}}{h(1 - h)K}
			= \frac{25}{3 K} < \frac{9}{K}$ holds.
		Here, $A(h, \theta)$ is the positive number defined by \cref{def_Atheta}, and $C_{1}$ is an absolute positive constant.

		Finally, we show $Y_{\theta, K}(t, X) \geq 0$.
		By simple calculations and the symmetry of nontrivial zeros, we see that
		\begin{align}
			&Y_{\theta, K}(t, X)\\
			&=\frac{1}{4}\sum_{\rho \in S}\l(\log\l( \frac{(\s_{\theta, K}(t, X) - \b)^2 + (t - \gamma)^{2}}{(\b - \frac{1}{2})^{2} + (t - \gamma)^{2}} \r)
			+ \log\l( \frac{(\s_{\theta, K}(t, X) - (1 - \b))^2 + (t - \gamma)^{2}}{(\b - \frac{1}{2})^{2} + (t - \gamma)^{2}} \r)\r)\\
			&= \frac{1}{4}\sum_{\rho \in S}\log\l( 1 + \frac{(\s_{\theta, K}(t, X) - \frac{1}{2})^{2}
			\l\{ (\s_{\theta, K}(t, X) - \frac{1}{2})^{2} + 2(t - \gamma)^{2} - 2(\b - \frac{1}{2})^{2} \r\}}{
			\{ (\b - \frac{1}{2})^{2} + (t - \gamma)^{2} \}^{2}} \r)
			\geq 0.
		\end{align}
		This concludes the proof of \cref{AFLZ}.
		\qed

\section{\textbf{Upper bounds for the distribution of $\Re e^{-\i \theta}\log{\zeta(\frac{1}{2} + \i t)}$}}	\label{Sec:UBDRZ}

	We denote the Lebesgue measure in $\RR$ by $\meas$.
	Let $\mathcal{L}$ be a Lebesgue measurable set with a subset of $[T, 2T]$.
	Define
	\begin{align}
		\PP_{T}^{\mathcal{L}}(f \in A)
		= \frac{1}{T}\meas\set{t \in \mathcal{L}}{f(t) \in A}
	\end{align}
	for any $T \geq 1$, any Lebesgue measurable function $f$, and any Lebesgue measurable set $A$ in $\RR$.
	If $\mathcal{L} = [T, 2T]$, we write $\PP_{T}^{\mathcal{L}} = \PP_{T}$.
	Let $T_{1}(K)$ be a sufficiently large number depending only on $K$.
	Under these notations, the goal of this section is to prove the following proposition.

	\begin{proposition}	\label{UBDRZ}
		Let $\lam$ be a positive number, and let $\Phi$ be a nonnegative-valued function satisfying \cref{AZDE}.
		Let $\theta \in [-\frac{\pi}{2}, \frac{\pi}{2}]$, and let $K$ be large.
		For any $T \geq T_{1}(K)$ and any $V \geq (\log{\log{\log{T}}})^{3}$, we have
		\begin{align}
			&\PP_{T}(\Re e^{-\i \theta} \log{\zeta(\tfrac{1}{2} + \i t)} > V)\\
			&\ll \exp\l( -\frac{V^{2}}{4 e K^{2}\log \log T} \r) + \exp\l( - \frac{A(h, \theta)}{K} V \log{V} \r)\\
			&\qquadf\qquadf+ e^{K} \frac{\Phi(T) V}{\log{T}}\exp\l( - \l( 1 - \frac{1}{K} \r)2 A(h, \theta) \lam V \r).
		\end{align}
	\end{proposition}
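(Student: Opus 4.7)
The strategy combines the approximate formula of \cref{AFLZ}, standard moment estimates for short Dirichlet polynomials over primes, and the zero density estimate \cref{AZDE}. For $t \in [T,2T]$ not an ordinate of a zero, \cref{AFLZ} together with $Y_{\theta,K}(t,X)\geq 0$ and $\cos\theta\geq 0$ (which lets us drop the $-\cos\theta\cdot Y_{\theta,K}$ contribution) gives
\begin{align*}
\Re e^{-\i\theta}\log\zeta(\tfrac{1}{2}+\i t) \leq \Re e^{-\i\theta}P_{\theta,K}(t,X) + \Bigl(\tfrac{h}{A(h,\theta)}+\tfrac{9}{K}\Bigr)(\s_{\theta,K}(t,X)-\tfrac{1}{2})E(t,X),
\end{align*}
where $E(t,X)=\tfrac{1}{2}\log|t|-\Re\sum_{p\leq X}w_X(p)\log p\cdot p^{-s_{\theta,K}(t,X)}+C_1\log X$. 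The plan is to choose $\log X\asymp K\log T/V$, so that whenever $\s_{\theta,K}(t,X)$ attains its minimal value $\tfrac{1}{2}+K/(h\log X)$ the error term is a controlled fraction of $V$; for such $V \geq (\log\log\log T)^3$ and $T$ large this choice also satisfies $\log\log X \asymp \log\log T$.

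\textbf{Case split.} I would partition $[T,2T]$ dyadically by the size of $\s_{\theta,K}(t,X)-\tfrac{1}{2}$: let $\mathcal{A}_0$ be the typical set on which $\s_{\theta,K}-\tfrac{1}{2}=K/(h\log X)$, and $\mathcal{A}_j$ ($j\geq 1$) the set on which $2^{j-1}K/(h\log X)<\s_{\theta,K}(t,X)-\tfrac{1}{2}\leq 2^j K/(h\log X)$. On $\mathcal{A}_0$ the pointwise inequality above reduces roughly to $|P_{\theta,K}(t,X)|\gg V$, and Chebyshev combined with a Dirichlet-polynomial moment estimate of the form
\begin{align*}
\int_{T}^{2T}|P_{\theta,K}(t,X)|^{2k}\, dt \ll T\cdot k!\cdot (C K^{2}\log\log X)^{k} \qquad \Bigl(k\leq \tfrac{\log T}{10\log X}\Bigr),
\end{align*}
proved via the standard near-orthogonality of $\{p^{-\i t}\}$, yields upon optimizing $k\sim V^{2}/(4eK^{2}\log\log T)$ the Gaussian term $\exp(-V^{2}/(4eK^{2}\log\log T))$; the factor $K^{2}$ enters through the coefficients $(1+(\s_{\theta,K}-\tfrac{1}{2})\log p)\leq 1+K/h$ present in the definition of $P_{\theta,K}$. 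Choosing $k$ proportional to $V/K$ in the same moment bound would give instead the Selberg-type bound $\exp(-A(h,\theta)V\log V/K)$.

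\textbf{Bad sets and the main obstacle.} For $t\in\mathcal{A}_j$ with $j\geq 1$, the definition of $\s_{\theta,K}$ forces a nontrivial zero $\rho=\b+\i\g$ with $\b-\tfrac{1}{2}\geq 2^{j-1}hK/\log X$ lying within distance $KX^{\b-1/2}/\log X$ of $t$. Summing the total length of these intervals using \cref{AZDE} and partial summation in $\b$ yields
\begin{align*}
\meas\mathcal{A}_j \ll \frac{K\Phi(T)\, T}{\log X}\cdot X^{\sigma_{j}-1/2}\, T^{-\lam(\sigma_{j}-1/2)},\qquad \sigma_{j}-\tfrac{1}{2}\asymp \tfrac{2^{j}K}{h\log X}.
\end{align*}
The dominant $j$ is the one for which the pointwise inequality of \cref{AFLZ} forces $V$ to be attained, i.e.\ $\sigma_{j}-\tfrac{1}{2}\approx 2A(h,\theta)V/\log T$; with $\log X\asymp K\log T/V$ the resulting exponential factor becomes $\exp(-2A(h,\theta)\lam V(1-\log X/(\lam\log T)))=\exp(-(1-1/K)\cdot 2A(h,\theta)\lam V)$ after absorbing $\log X/\log T\asymp 1/K$, together with the explicit prefactor $e^{K}\Phi(T)V/\log T$. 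The main obstacle I expect is calibrating the single parameter $X$ so as to simultaneously achieve (i) variance $\asymp K^{2}\log\log T$ on $\mathcal{A}_{0}$, (ii) an $o(V)$ error there from \cref{AFLZ}, and (iii) the sharp exponent $2A(h,\theta)\lam$ in the zero-density term; the last of these hinges on the constant $h/A(h,\theta)$ attached to the pointwise error term cancelling precisely against the density shift, which is exactly what the optimization defining $A(h,\theta)$ in \cref{def_Atheta} delivers.
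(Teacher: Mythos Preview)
Your overall architecture matches the paper's: split into the ``good'' set where $\s_{\theta,K}(t,X)$ is minimal (the paper calls this $\mathcal{E}_{K}(X,T)$, your $\mathcal{A}_{0}$) and its complement, control the complement by the zero density estimate, and handle the good set via Chebyshev plus high moments of Dirichlet polynomials. The dyadic decomposition in $j\geq 1$ is superfluous; the paper simply bounds $\meas([T,2T]\setminus\mathcal{E}_{K}(X,T))$ in one stroke (\cref{EXDZ}), integrating over the abscissa internally.

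There is, however, a genuine miscalculation on the good set. With the choice $X=T^{(K+30)/2A(h,\theta)V}$ (which is what $\log X\asymp K\log T/V$ must mean to get the stated zero-density term), the deterministic piece $\tfrac{1}{2}\log|t|$ of the error in \cref{AFLZ} already contributes $V(1-O(1/K))$, not ``a controlled fraction'': the remaining threshold for the Dirichlet polynomials is $\asymp V/K$, not $\gg V$. Moreover, the variance of $P_{\theta,K}$ is $\log\log T+O(K^{2})$, not $K^{2}\log\log T$; the factor $(1+(\s_{\theta,K}-\tfrac{1}{2})\log p)^{2}$ only adds $O(K^{2})=o(\log\log T)$ to $\sum_{p\le X}1/p$. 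Your two errors happen to cancel in the Gaussian exponent, but the mechanism is wrong: the $K^{2}$ comes from the threshold $V/K$ against variance $\log\log T$. This also explains the $A(h,\theta)$ in the $V\log V$ term: the moment lemma requires $X^{k}\le T$, forcing $k\le 2A(h,\theta)V/(K+30)$, and it is this cap on $k$ that produces $\exp(-A(h,\theta)V\log V/K)$ for the secondary pieces. Finally, to reach the optimal Gaussian $k\asymp V^{2}/(K^{2}\log\log T)$ one must relax that cap; the paper does so by further truncating the main Dirichlet polynomial at $Y=X^{1/(\log\log T)^{2}}$ and treating the range $Y<p\le X$ (of variance $\ll\log\log\log T$) separately---a step your plan omits.
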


	\subsection{Preliminaries}
		Define the set $\mathcal{E}_{K}(X, T)$ by
		\begin{align}
			\mathcal{E}_{K}(X, T)
			\ceq \set{t \in [T, 2T]}{\zeta(\s + \i y) \not= 0 \text{ for } \s > \frac{1}{2} + \frac{K}{\log{X}}, |t - y| \leq K \frac{X^{\s - 1 / 2}}{\log{X}}}.
		\end{align}
		It then follows from the definition of $\s_{\theta, K}(t, X)$ that
		\begin{align}	\label{RDFE}
			\mathcal{E}_{K}(X, T)
			= \set{t \in [T, 2T]}{\s_{\theta, K}(t, X) = \frac{1}{2} + \frac{K}{h \log{X}}}.
		\end{align}
		First, we show that the measure of $\mathcal{E}_{K}(X, T)$ is close to $T$.

		\begin{lemma}	\label{EXDZ}
			Let $\lam$ be a positive number, and let $\Phi$ be a nonnegative-valued function satisfying \cref{AZDE}.
			Let $K \geq 1$.
			For any $T \geq T_{1}(K)$ and any $e^{2K} \leq X \leq T^{1 / 2}$, we have
			\begin{align}
				\frac{1}{T}\meas ([T, 2T] \setminus \mathcal{E}_{K}(X, T))
				\ll K e^{K}\frac{\Phi(T)}{\log{X}}\exp\l( -\lam K \frac{\log{T}}{\log{X}} \r).
			\end{align}
		\end{lemma}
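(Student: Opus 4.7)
The plan is to bound the measure of the complement $[T,2T] \setminus \mathcal{E}_K(X,T)$ by a union bound over nontrivial zeros with real part close to $1$, and then to evaluate the resulting sum via partial summation using the zero density estimate \cref{AZDE}.

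First, from the definition of $\mathcal{E}_K(X,T)$, a point $t \in [T,2T]$ lies in the complement if and only if there exists a nontrivial zero $\rho = \beta + \i\gamma$ of $\zeta$ with $\beta > \tfrac{1}{2} + K/\log X$ and $|t - \gamma| \leq KX^{\beta - 1/2}/\log X$. Since $X \leq T^{1/2}$ forces $X^{\beta - 1/2} \leq X^{1/2} \leq T^{1/4}$, only zeros with $\gamma$ in a slightly enlarged interval, say $[T/2, 3T]$, can contribute for $T \geq T_1(K)$ sufficiently large. Each such zero contributes at most $2KX^{\beta - 1/2}/\log X$ to the measure, giving
\begin{align}
\meas([T,2T] \setminus \mathcal{E}_K(X,T)) \leq \frac{2K}{\log X} \sum_{\substack{\rho\,:\,\beta > \frac{1}{2} + K/\log X \\ T/2 \leq \gamma \leq 3T}} X^{\beta - 1/2}.
\end{align}

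Next, I would rewrite the sum as a Riemann--Stieltjes integral against $N(\sigma, 3T) - N(\sigma, T/2)$, the counting function for zeros in the horizontal strip. Integration by parts, using that this function vanishes for $\sigma \geq 1$, converts the sum into
\begin{align}
e^{K} \bigl(N(\tfrac{1}{2} + \tfrac{K}{\log X}, 3T) - N(\tfrac{1}{2} + \tfrac{K}{\log X}, T/2)\bigr) + \log X \int_{\frac{1}{2} + K/\log X}^{1} X^{\sigma - 1/2} N(\sigma, 3T)\, d\sigma,
\end{align}
where I use $X^{K/\log X} = e^{K}$. Applying \cref{AZDE} to both terms yields, after substituting $u = \sigma - \tfrac{1}{2}$, a main term of size $e^{K} T \Phi(T) e^{-\lambda K \log T/\log X}$ plus the integral
\begin{align}
\log X \cdot T \Phi(T) \int_{K/\log X}^{1/2} e^{u(\log X - \lambda \log T)}\, du.
\end{align}

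The key numerical point is that because $X \leq T^{1/2}$ and we may assume $\lambda \geq 1$ (via Conrey's result, as noted in the paper), the exponent $\log X - \lambda \log T$ is negative and in fact bounded above by $-\tfrac{1}{2}\lambda \log T$. Hence the integral is dominated by the contribution at the lower endpoint $u = K/\log X$, producing a factor $\frac{\log X}{\lambda \log T - \log X} e^{K} e^{-\lambda K \log T/\log X} \ll \frac{\log X}{\log T} e^{K} e^{-\lambda K \log T/\log X}$. Collecting terms and dividing by $T$ and multiplying by $2K/\log X$, the claimed bound drops out.

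The main obstacle is bookkeeping: keeping track of the constants to ensure the final bound has the form $K e^{K} \Phi(T)/\log X$ rather than something weaker, which is why the restriction $X \geq e^{2K}$ (ensuring $K/\log X \leq 1/2$ so the integration interval is nonempty and well-behaved) and the choice of the horizontal strip $[T/2, 3T]$ matter. Otherwise, the proof is a direct application of the union bound plus partial summation, with no deep ingredient beyond the hypothesized zero density estimate.
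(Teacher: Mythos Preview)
Your proposal is correct and follows essentially the same approach as the paper: a union bound over zeros contributing to the complement, followed by conversion of $\sum X^{\beta-1/2}$ into an integral against the zero-counting function and an application of \eqref{AZDE}. The only cosmetic difference is that the paper writes $X^{\beta-1/2}/\log X = \int_{-\infty}^{\beta} X^{\sigma-1/2}\,d\sigma$ and swaps sum and integral (Fubini), whereas you phrase the same identity as Riemann--Stieltjes integration by parts; unwinding either device yields the identical boundary term $e^{K} N(\tfrac12+\tfrac{K}{\log X},3T)$ plus the integral $\log X\int_{1/2+K/\log X}^{1} X^{\sigma-1/2} N(\sigma,3T)\,d\sigma$, which is then estimated exactly as you describe.
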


		\begin{proof}
			We see that
			\begin{align}
				&\meas ([T, 2T] \setminus \mathcal{E}_{K}(X, T))
				\leq \sum_{\b > \frac{1}{2} + \frac{K}{\log{X}}, T - KX \leq \gamma \leq 2T + KX}2K\frac{X^{\b - 1 / 2}}{\log{X}}\\
				&= 2K \sum_{\b > \frac{1}{2} + \frac{K}{\log{X}}, 0 \leq \gamma \leq 3T}\int_{-\infty}^{\b} X^{\b - 1 / 2}d\s
				= 2K \int_{-\infty}^{1}X^{\s - 1 / 2}\sum_{\substack{\b > \max\{\frac{1}{2} + \frac{K}{\log{X}}, \s\},\\ 0 \leq \gamma \leq 3T}}1 d\s.
			\end{align}
			Therefore, it follows from \cref{AZDE} that
			\begin{align}
				&\frac{1}{T}\meas ([T, 2T] \setminus \mathcal{E}_{K}(X, T))\\
				&\ll K \Phi(T)\l(
				T^{- \lam K / \log{X}}\int_{-\infty}^{\frac{1}{2} + \frac{K}{\log{X}}} X^{\s - 1 / 2}d\s
				+ \int_{\frac{1}{2} + \frac{K}{\log{X}}}^{1}\l( \frac{T^{\lam}}{X} \r)^{-(\s - 1 / 2)} d\s\r)\\
				&\ll K e^{K}\frac{\Phi(T)}{\log{X}}\exp\l( - \lam K \frac{\log{T}}{\log{X}} \r),
			\end{align}
			which is the desired assertion.
		\end{proof}

		\begin{lemma}	\label{SLL}
			Let $T \geq 5$, and let $X \geq 3$. Let $k$ be a positive integer such that $X^{k} \leq T$.
			Then, for any complex numbers $a(p)$, we have
			\begin{align}
				\int_{T}^{2T}\bigg| \sum_{p \leq X}\frac{a(p)}{p^{\i t}} \bigg|^{2k}dt
				\ll T k! \l( \sum_{p \leq X}|a(p)|^2 \r)^{k}.
			\end{align}
			Here, the above sums run over prime numbers.
		\end{lemma}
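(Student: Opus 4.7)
The plan is a standard ``diagonal + combinatorial'' argument for prime Dirichlet polynomials. Write $P(t) = \sum_{p \leq X} a(p) p^{-\i t}$ and raise to the $k$-th power:
\begin{align*}
P(t)^{k}
= \sum_{p_{1}, \ldots, p_{k} \leq X}\frac{a(p_{1}) \cdots a(p_{k})}{(p_{1} \cdots p_{k})^{\i t}}
= \sum_{n \leq X^{k}}\frac{c_{k}(n)}{n^{\i t}},
\end{align*}
where $c_{k}(n) = \sum_{p_{1} \cdots p_{k} = n} a(p_{1}) \cdots a(p_{k})$ with the sum over ordered $k$-tuples of primes $\leq X$. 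In particular, if $n = \prod_{p} p^{e_{p}}$ with $\sum_{p} e_{p} = k$, then $c_{k}(n) = \binom{k}{e_{p_{1}}, e_{p_{2}}, \ldots} \prod_{p}a(p)^{e_{p}}$.

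Next I would apply the Montgomery--Vaughan mean value theorem to $P(t)^{k}$, which is a Dirichlet polynomial of length $\leq X^{k} \leq T$. This yields
\begin{align*}
\int_{T}^{2T}|P(t)|^{2k}dt
= \int_{T}^{2T}|P(t)^{k}|^{2} dt
= \sum_{n \leq X^{k}}|c_{k}(n)|^{2}\bigl(T + O(n)\bigr)
\ll T \sum_{n \leq X^{k}}|c_{k}(n)|^{2},
\end{align*}
using $n \leq X^{k} \leq T$ to absorb the error term.

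It remains to estimate the diagonal sum. Using the explicit form of $c_{k}(n)$ together with the trivial inequality $\binom{k}{e_{p_{1}}, e_{p_{2}}, \ldots} = \frac{k!}{\prod_{p} e_{p}!} \leq k!$, I would compare
\begin{align*}
\sum_{n}|c_{k}(n)|^{2}
= \sum_{\sum e_{p} = k}\binom{k}{e_{p_{1}}, e_{p_{2}}, \ldots}^{2}\prod_{p}|a(p)|^{2e_{p}}
\leq k! \sum_{\sum e_{p} = k}\binom{k}{e_{p_{1}}, e_{p_{2}}, \ldots}\prod_{p}|a(p)|^{2e_{p}}
\end{align*}
with the multinomial expansion
\begin{align*}
\Bigl(\sum_{p \leq X}|a(p)|^{2}\Bigr)^{k}
= \sum_{\sum e_{p} = k}\binom{k}{e_{p_{1}}, e_{p_{2}}, \ldots}\prod_{p}|a(p)|^{2e_{p}}.
\end{align*}
Combining these gives $\sum_{n}|c_{k}(n)|^{2} \leq k! (\sum_{p}|a(p)|^{2})^{k}$, and the lemma follows.

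There is no real obstacle here; the only point requiring the hypothesis $X^{k} \leq T$ is the Montgomery--Vaughan step, which converts the off-diagonal terms into $O(T)$-sized error. If one prefers not to invoke Montgomery--Vaughan, the same conclusion can be obtained by direct diagonal/off-diagonal analysis, estimating $\int_{T}^{2T}(m/n)^{\i t}dt \ll \min(T, 1/|\log(m/n)|)$ and using $m, n \leq X^{k} \leq T$, which yields the same bound up to an absolute constant.
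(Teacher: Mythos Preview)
Your argument is correct and is exactly the standard proof of this well-known mean value estimate. The paper itself does not give a proof at all; it simply cites Lemma~3.1 of \cite{IL2021}, so your write-up in fact supplies more detail than the paper does.
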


		\begin{proof}
			This is Lemma 3.1 in \cite{IL2021}.
		\end{proof}

	\subsection{Proof of \cref{UBDRZ}}
		Let $\theta \in [-\frac{\pi}{2}, \frac{\pi}{2}]$, and let $K$ be large.
		Let $T \geq T_{1}(K)$.
		When $t \in \mathcal{E}_{K}(X, T)$ we have $\s_{\theta, K}(t, X) = \frac{1}{2} + \frac{K}{h \log{X}}$,
		so it then follows from \cref{AFLZ} that
		\begin{align}
			&\Re e^{-\i \theta} \log\zeta(\tfrac{1}{2} + \i t)\\
			&\leq \Re e^{-\i \theta} P_{\theta, K}(t, X)
			+ \frac{K + 15}{A(h, \theta)}\frac{1}{\log{X}} \l( \frac{1}{2}\log{T}
			- \Re \sum_{p \leq X}\frac{w_{X}(p) \log{p}}{p^{\frac{1}{2} + \frac{K}{h \log{X}} + \i t}} \r) + O(K)
		\end{align}
		for $3 \leq X \leq T^{6}$ since $\frac{2}{5} < h < \frac{3}{4}$ and $\frac{1}{9} < A(h, \theta) < 1$ hold.
		Here, $P_{\theta, K}(t, X)$ is expressed as
		\begin{align}
			P_{\theta, K}(t, X)
			= \sum_{p \leq X}\l( \frac{w_{X}(p)}{p^{\frac{1}{2} + \frac{K}{h \log{X}} + \i t}}\l( 1 + \frac{K \log{p}}{h \log{X}} \r)
			+ \frac{1}{2p^{1 + \frac{2K}{h \log{X}} + 2 \i t}} \r).
		\end{align}
		Choosing $X = T^{(K + 30) / 2 A(h, \theta) V}$ and using \cref{EXDZ}, we obtain
		\begin{align}
			&\PP_{T}(\Re e^{-\i \theta} \log\zeta(\tfrac{1}{2} + \i t) > V)
			\leq \PP_{T}^{\mathcal{E}_{K}(X, T)}(\Re e^{-\i \theta} \log\zeta(\tfrac{1}{2} + \i t) > V) + \frac{1}{T}\meas\mathcal{E}_{K}(X, T)\\
			\label{PUBDRZ1}
			\leq &\PP_{T}\l(\Re e^{-\i \theta} P_{\theta, K}(t, X)
			- \frac{K + 15}{A(h, \theta)\log{X}}\Re\sum_{p \leq X}\frac{w_{X}(p) \log{p}}{p^{\frac{1}{2}
						+ \frac{K}{h \log{X}} + \i t}} > \frac{V}{K} \r)\\
			&\quad+ O\l( e^{K} \frac{\Phi(T) V}{\log{T}} \exp\l( - \l(1 - \frac{1}{K} \r) 2 A(h, \theta) \lam V \r) \r).
		\end{align}
		When $V > \frac{\log{T}}{\sqrt{\log{\log{T}}}}$, the measure on the right hand side is zero, so we may assume $V \leq \frac{\log{T}}{\sqrt{\log{\log{T}}}}$.
		Then the inequality $X \geq 3$ always holds for any large $T$.
		Put $Y = X^{1 / (\log \log T)^{2}} = T^{(K + 30) / 2 A(h, \theta) V (\log{\log{T}})^{2}}$.
		Using \cref{SLL}, we can show for any positive integer $k$ with $k \leq 2 A(h, \theta) V / (K + 30)$ that
		\begin{align}
			&\int_{T}^{2T}\bigg| \sum_{Y < p \leq X}
			\frac{w_{X}(p)}{p^{\frac{1}{2} + \frac{K}{h \log{X}} + \i t}}\l( 1 + \frac{K \log{p}}{h \log{X}} \r) \bigg|^{2k}dt
			\ll T (3 k \log \log \log T)^{k},
		\end{align}
		\begin{align}
			&\int_{T}^{2T}\bigg| \sum_{p \leq X}\frac{1}{2p^{1 + \frac{2K}{h \log{X}} + 2 \i t}} \bigg|^{2k}dt
			\ll T (C k)^{k},
		\end{align}
		and that
		\begin{align}
			\int_{T}^{2T}\bigg|\frac{K + 15}{A(h, \theta)\log{X}}
			\sum_{p \leq X}\frac{w_{X}(p) \log{p}}{p^{\frac{1}{2} + \frac{K}{h \log{X}} + \i t}} \bigg|^{2k}dt
			\ll T (C k K)^{k}
		\end{align}
		for some absolute constant $C > 0$.
		Therefore, we have
		\begin{align}
			&\PP_{T}\biggl( \bigg| \sum_{Y < p \leq X}
			\frac{w_{X}(p)}{p^{\frac{1}{2} + \frac{K}{h \log{X}} + \i t}}\l( 1 + \frac{K \log{p}}{h \log{X}} \r) \bigg|
			+ \bigg| \sum_{p \leq X}\frac{1}{2p^{1 + \frac{2K}{h \log{X}} + 2 \i t}} \bigg|\\
			&\qquadf \qquadf \qquad + \bigg|
			\frac{K + 15}{A(h, \theta)\log{X}}\sum_{p \leq X}\frac{w_{X}(p) \log{p}}{p^{\frac{1}{2} + \frac{K}{h \log{X}} + \i t}} \bigg|
			> \frac{V}{2K}  \biggr)\\
			&\ll \l( \frac{3 \cdot  k (2K)^{2} \log{\log{{\log{T}}}}}{V^{2}} \r)^{k}
			\leq \exp\l( - \frac{A(h, \theta)}{K}V \log{V} \r)
		\end{align}
		by taking $k = \GS{2 A(h, \theta) V / (K + 30)}$.
		In the last deformation, we used the assumptions $V \geq (\log{\log{\log{T}}})^{3}$ and $T \geq T_{1}(K)$.
		Applying this inequality to \cref{PUBDRZ1}, we have
		\begin{align}
			&\PP_{T}(\Re e^{-\i \theta} \log\zeta(\tfrac{1}{2} + \i t) > V)\\
			&\leq \PP_{T}\l(\Re e^{-\i \theta} \sum_{p \leq Y}
			\frac{w_{X}(p)}{p^{\frac{1}{2} + \frac{K}{h \log{X}} + \i t}}\l( 1 + \frac{K \log{p}}{h \log{X}} \r) > \frac{V}{2K} \r)\\
			&\qquadt + O\l( \exp\l( - \frac{A(h, \theta)}{K} V \log{V} \r) + e^{K} \frac{\Phi(T) V}{\log{T}}\exp\l( - \l( 1 - \frac{1}{K} \r)2 A(h, \theta) \lam V \r) \r).
		\end{align}
		For any $\ell \in \ZZ_{\geq 1}$ with $\ell \leq 2 A(h, \theta) V (\log{\log{T}})^{2} / (K + 30)$, we use \cref{SLL} to obtain
		\begin{align}
			&\int_{T}^{2T}\bigg|\sum_{p \leq Y}
			\frac{w_{X}(p)}{p^{\frac{1}{2} + \frac{K}{h \log{X}} + \i t}}\l( 1 + \frac{K \log{p}}{h \log{X}} \r)\bigg|^{2\ell}dt
			\ll T \ell! (\log{\log{T}})^{\ell}.
		\end{align}
		Hence, we have
		\begin{align}
			\PP_{T}\l(\Re e^{-\i \theta} \sum_{p \leq Y}
			\frac{w_{X}(p)}{p^{\frac{1}{2} + \frac{K}{h \log{X}} + \i t}}\l( 1 + \frac{K \log{p}}{h \log{X}} \r) > \frac{V}{2K} \r)
			\ll ((V / 2K)^{-2} \ell \log{\log{T}})^{\ell}.
		\end{align}
		If $V \leq (\log \log T)^{3/2}$, this is $\leq \exp\l( - V^{2} / 4 e K^{2} \log \log T \r)$ by taking $\ell =\GS{(V / 2K)^{2} / e \log \log T}$,
		and if $V \geq (\log \log T)^{\frac{3}{2}}$, this is $\ll \exp(- V \log{V} (\log\log{{T}})^{\frac{1}{5}})$ by taking $\ell =\GS{V (\log \log T)^{1/4}}$.
		This completes the proof of \cref{UBDRZ}.
		\qed

\section{\textbf{Exponential moments of Dirichlet polynomials}}	\label{Sec:EMDP}
	In this section, we evaluate exponential moments of Dirichlet polynomials by applying Harper's method to our approximate formula.
	In the following, we regard that empty sums are zero, and empty products are one.
	Let $\k = k + 3$.
	Put $L = (\log{T} / \log{\log{T}})^{1/8}$.
	Define the sequence $\{ \delta_{i} \}$ by $\delta_{0} = 0$, and
	\begin{align} \label{def_beta}
		\delta_{i} \ceq \frac{1}{(L + 1 - i)^{8}}
	\end{align}
	for $i \in \ZZ_{\geq 1}$, and define
	\begin{align}
		\I = \I _{k, K, T}
		\ceq 1 + \max\set{i \in \ZZ_{\geq 0}}{\delta_{i} \leq e^{- 80 A(h, \theta) \k K}}.
	\end{align}
	Put
	\begin{align}
		\phi_{j}(p)
		&= \phi_{j}(p; T, \theta, K)\\
		&\ceq \frac{w_{T^{\delta_{j}}}(p)}{p^{K / h \log{T^{\delta_{j}}}}}
		\l(e^{-\i\theta} + \frac{e^{-\i\theta} K \log{p}}{h \log{T^{\delta_{j}}}}
		- \l(\frac{K}{A(h, \theta)} + \frac{9}{h}\r)\frac{\log{p}}{\log{T^{\delta_{j}}}}\r),
	\end{align}
	and
	\begin{align}
		\psi_{j}(p)
		= \psi_{j}(p; T, \theta, K)
		\ceq \frac{e^{-\i\theta}}{p^{2K / h \log{T^{\delta_{j}}}}}.
	\end{align}
	We remark that when $K$ is large, $|\phi_{j}(p)| < 8$ holds in the ranges $\frac{2}{5} < h < \frac{3}{4}$ and $1 < \frac{1}{A(h, \theta)} < 9$.
	For each $1 \leq i \leq j \leq \I $, set
	\begin{align}
		a_{j}(p, t)
		= a_{j}(p, t; T, \theta, K)
		\ceq \Re \l(\frac{\phi_{j}(p)}{p^{\frac{1}{2}}}p^{-\i t} + \frac{\psi_{j}(p)}{2p}p^{-2\i t} \r),
	\end{align}
	and
	\begin{align}   \label{def_Gi}
		G_{(i, j)}(t) = G_{(i, j)}(t; T, \theta, K)
		\ceq \sum_{T^{\delta_{i - 1}} < p \leq T^{\delta_{i}}}a_{j}(p, t).
	\end{align}
	Define the sets $\mathcal{A}(i, j) = \mathcal{A}_{k, T, \theta, K}(i, j)$ and $\mathcal{B}(j) = \mathcal{B}_{T, K}(j)$ by
	\begin{gather}
		\label{def_Aij}
		\mathcal{A}(i, j)
		= \set{t \in [T, 2T]}{| G_{(i, j)}(t) | \leq \frac{1}{20 e^{2} \k (j + 1 - i)^2 \delta_{i}}},\\
		\label{def_Bj}
		\mathcal{B}(j)
		= \set{t \in [T, 2T]}{\s_{\theta, K}(t, T^{\delta_{j}}) = \frac{1}{2} + \frac{K}{h \log{T^{\delta_{j}}}}}.
	\end{gather}
	It then holds that $\mathcal{B}(j) = \mathcal{E}_{K}(T^{\delta_{j}}, T)$ by \cref{RDFE}.
	We define
	$\mathcal{T} = \mathcal{T}_{k, T, \theta, K}$, $\mathcal{S}(j) = \mathcal{S}_{k, T, \theta, K}(j)$ by
	\begin{align} \label{def_calT}
		\mathcal{T}
		&\ceq \bigcap_{1 \leq i \leq \I}\mathcal{A}(i, \I ) \cap \mathcal{B}(\I),\\
		\label{def_calS0}
		\Sc(0)
		&\ceq \bigcup_{1 \leq \ell \leq \I}\mathcal{A}(1, \ell)^{c} \cup \mathcal{B}(1)^{c},
	\end{align}
	and
	\begin{align} \label{def_calSj}
		\mathcal{S}(j)
		&\ceq \bigcap_{1 \leq i \leq j}\mathcal{A}(i, j) \cap \mathcal{B}(j)
		\cap \l(\bigcup_{j+ 1 \leq \ell \leq \I}\mathcal{A}(j + 1, \ell)^{c} \cup \mathcal{B}(j + 1)^{c}\r)
	\end{align}
	for $1 \leq j \leq \I - 1$.
	Here, $\mathcal{A}(i, j)^{c}$, $\mathcal{B}(j)^{c}$ stand for their complements in $[T, 2T]$,
	that is, $\mathcal{A}(i, j)^{c} = [T, 2T] \setminus \mathcal{A}(i, j)$, $\mathcal{B}(j)^{c} = [T, 2T] \setminus \mathcal{B}(j)$.
	It then holds that $[T, 2T] = \mathcal{T} \cup \bigcup_{j = 0}^{\I - 1}\mathcal{S}(j)$, which is proved as follows.
	For any $t \in [T, 2T] \setminus \mathcal{T}$, we can take an $i \in \{1, \dots, \I \}$
	such that $\s_{\theta, K}(t, T^{\delta_{i}}) > \frac{1}{2} + \frac{K}{h\log{T^{\delta_{i}}}}$ or
	$|G_{(i, \ell)}(t)| > \frac{1}{20 e^{2} \k (\ell + 1 - i)^2 \delta_{i}}$ for some $i \leq \ell \leq \I $,
	and then $t \in \mathcal{S}(i_{0} - 1)$ with $i_{0}$ the minimum of such $i$'s.
	We also note that the partition is not always disjoint.

	Under these notations, we prove the following key propositions.

	\begin{proposition}   \label{lem_T}
		Let $\theta \in \RR$, and let $K \geq 1$.
		For any $k \geq 0$, $T \geq e^{(1000 k \log{\k})^{2}} T_{0}$ with $T_{0}$ a large constant, we have
		\begin{align}
			&\int_{\mathcal{T}}\exp\l( 2k \sum_{p \leq T^{\delta_{\I}}}a_{\I}(p, t)\r)dt
			\ll e^{-k^2 (\log \log \k + O(1))} T (\log{T^{\delta_{\I}}})^{k^2}.
		\end{align}
		This implicit constant is absolute.
	\end{proposition}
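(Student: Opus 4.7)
The plan is to exploit the product factorisation of the exponential together with the disjointness of the prime ranges $(T^{\delta_{i-1}}, T^{\delta_{i}}]$, following Harper's Taylor-truncation strategy adapted to the current partition. Since each $G_{(i, \I)}$ is a Dirichlet polynomial over a disjoint range of primes,
\begin{align}
\exp\bigg(2 k \sum_{p \leq T^{\delta_{\I}}} a_{\I}(p, t)\bigg) = \prod_{i = 1}^{\I} \exp\bigl(2 k\, G_{(i, \I)}(t)\bigr),
\end{align}
and on $\mathcal{T} \subset \bigcap_{i} \mathcal{A}(i, \I)$ the definition of $\mathcal{A}(i, \I)$ together with $k \leq \k$ yields the uniform bound
\begin{align}
|2 k\, G_{(i, \I)}(t)| \leq \frac{k}{10 e^{2} \k (\I + 1 - i)^{2} \delta_{i}} \leq \frac{1}{10 e^{2} (\I + 1 - i)^{2} \delta_{i}}
\end{align}
for each $1 \leq i \leq \I$.

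Next I would set $n_{i} = \lfloor 1 / (10 e (\I + 1 - i)^{2} \delta_{i}) \rfloor$, so that on $\mathcal{T}$ one has $|2 k\, G_{(i, \I)}(t)| \leq n_{i} / e$ and also $\sum_{i} n_{i} \delta_{i} \leq \sum_{i} (\I + 1 - i)^{-2} / (10 e) < 1$. The first inequality makes the truncation essentially lossless,
\begin{align}
\exp\bigl(2 k\, G_{(i, \I)}(t)\bigr) \leq \bigg(\sum_{m = 0}^{n_{i}} \frac{(2 k\, G_{(i, \I)}(t))^{m}}{m!}\bigg) + O(e^{- c\, n_{i}})
\end{align}
on $\mathcal{T}$, while the second ensures that after expansion $\prod_{i} P_{n_{i}}(2 k\, G_{(i, \I)}(t))$ becomes a Dirichlet polynomial in $t$ of length at most $T^{1 - \varepsilon}$, to which the mean value theorem for Dirichlet polynomials applies over $[T, 2T]$.

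The indicator $\mathbf{1}_{\mathcal{T}}$ can then be dropped, using non-negativity of the original integrand against the truncation bound, and integration combined with orthogonality of $\{p^{-i t}\}$ kills all cross terms between distinct prime ranges. The surviving diagonal contribution factors over $i$:
\begin{align}
T \prod_{i = 1}^{\I} \sum_{m = 0}^{n_{i}} \frac{1}{m!} \bigg(2 k^{2} \sum_{T^{\delta_{i - 1}} < p \leq T^{\delta_{i}}} \frac{|\phi_{\I}(p)|^{2}}{2 p}\bigg)^{m} + (\text{tail}),
\end{align}
where the combinatorial prefactor comes from matched pairings inside $(a_{\I}(p, t))^{m}$. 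Dominating each inner sum by its exponential majorant, telescoping over $i$, and applying Mertens' theorem with the estimate $|\phi_{\I}(p)|^{2} = 1 + O(\log p / \log T^{\delta_{\I}})$ for $p$ far from $T^{\delta_{\I}}$ then produces
\begin{align}
T \exp\bigg(k^{2} \sum_{p \leq T^{\delta_{\I}}} \frac{|\phi_{\I}(p)|^{2}}{p}\bigg) \ll T\, (\log T^{\delta_{\I}})^{k^{2}} e^{O(k^{2})}.
\end{align}

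The principal obstacle will be upgrading this crude $e^{O(k^{2})}$ prefactor to the sharp $e^{- k^{2}(\log \log \k + O(1))}$ claimed in the statement. The saving should come from treating primes $p \leq \k^{C}$ separately, since on that range the blanket estimate $|\phi_{\I}(p)| < 8$ overestimates the variance contribution by a factor $\log \log \k$ compared to the diagonal moment calculation; one also has to exploit the genuine decay of $|\phi_{\I}(p)|^{2}$ near the top of the range coming from the $p^{-K / (h \log T^{\delta_{\I}})}$ weight. A secondary task is the fine-tuning of the $n_{i}$: since $\delta_{i} = (L + 1 - i)^{-8}$ gives $n_{i} \asymp (L + 1 - i)^{6}$, the Taylor tail errors $e^{-c\, n_{i}}$ must be summed carefully against the telescoping variances before the indicator is discarded. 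The mean-value step itself I expect to be routine once the length condition $\sum_{i} n_{i} \delta_{i} < 1$ has been secured.
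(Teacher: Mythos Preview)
Your overall strategy matches the paper's: Taylor-truncate each factor $\exp(2kG_{(i,\I)})$ on $\mathcal{T}$, extend to $[T,2T]$, and replace the integral by a random-model expectation via orthogonality. Two points in your sketch would not go through as written, however.

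\textbf{Nonnegativity when dropping $\mathbf{1}_{\mathcal{T}}$.} The truncated polynomial $P_{n_{i}}(2kG_{(i,\I)}(t)) = \sum_{m\le n_{i}}(2kG_{(i,\I)}(t))^{m}/m!$ can be negative once $t\notin\mathcal{T}$, so the product $\prod_{i}P_{n_{i}}$ is not a legitimate majorant on all of $[T,2T]$, and you cannot extend the domain of integration after bounding by it. The paper fixes this by the standard Harper device of \emph{squaring}: one bounds $\exp(2kG_{(i,\I)})\le C\bigl(\sum_{n\le B_{i}}(kG_{(i,\I)})^{n}/n!\bigr)^{2}$ on $\mathcal{T}$ and then extends, since the squared polynomial is nonnegative everywhere. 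This changes the moment you compute to $\sum_{m,n\le B_{i}}\frac{k^{m+n}}{m!n!}\mathbb{E}[G_{(i,\I)}(\mathfrak{X})^{m+n}]$ rather than your single sum.

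\textbf{The sharp prefactor $e^{-k^{2}\log\log\k}$.} Your route of majorising each factor by its full exponential and then invoking $\sum_{p}|\phi_{\I}(p)|^{2}/p$ can only ever yield an \emph{upper} bound $e^{O(k^{2})}(\log T^{\delta_{\I}})^{k^{2}}$; it cannot see the saving $e^{-k^{2}\log\log\k}$, because that factor is strictly smaller than what the variance heuristic predicts. The paper instead writes the truncated product as $M-\sum_{\ell}E(\ell)$, where $M=\mathbb{E}\bigl[\exp(2k\sum_{i}G_{(i,\I)}(\mathfrak{X}))\bigr]$ is computed \emph{exactly} (asymptotically) by independence over primes: splitting at $p=(18k)^{2}$ and using $\mathbb{E}[a_{\I}(p,\mathfrak{X})]=0$, $\mathbb{E}[a_{\I}(p,\mathfrak{X})^{2}]=\tfrac{1}{2p}+\cdots$ for $p>(18k)^{2}$ gives $M=e^{-k^{2}(\log\log\k+O(1))}(\log T^{\delta_{\I}})^{k^{2}}$ directly, since $\sum_{(18k)^{2}<p\le T^{\delta_{\I}/3}}p^{-1}=\log\log T^{\delta_{\I}}-\log\log\k+O(1)$. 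The tail terms $E(\ell)$ are then bounded separately using moment estimates for $G_{(\ell,\I)}(\mathfrak{X})$. Your ``separate treatment of $p\le\k^{C}$'' intuition is correct, but it must be executed at the level of the exact expectation $M$, not as a refinement of an upper bound.
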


	\begin{proposition}   \label{lem_Sj}
		Let $\lam$ be a positive number, and let $\Phi$ be a nonnegative-valued function satisfying \cref{AZDE}.
		Let $\theta \in \RR$, and let $K$ be large.
		Let $a, b > 1$ with $\frac{1}{a} + \frac{1}{b} = 1$.
		For any $k \geq 0$, $T \geq e^{(1200k\log{\k})^{2}} T_{1}(K)$, we have
		$
			\meas \Sc(0) \ll T (\Phi(T) + 1) \exp\l( - \frac{\log{T}}{\log{\log{T}}}  \r)
		$, and
		\begin{align}
			\label{lem_Sj_1}
			&\int_{\mathcal{S}(j)}\exp\l( 2k \sum_{p \leq T^{\delta_{j}}}a_{j}(p, t) \r)dt\\
			&\ll e^{- \frac{1}{21} \delta_{j}^{-1} \log(1 / \delta_{j + 1})}
			e^{-k^{2}(\log{\log{\k}} + O(1))} T (\log{T^{\delta_{j}}})^{k^{2}}\\
			&\qquadt + \l( K e^{K} \frac{\Phi(T)}{\log{T}} \r)^{1/a} e^{- (\lam K - 1)\delta_{j + 1}^{-1} / a}
			e^{-k^{2} b (\log{\log{(\k b)}} + O(1))} T (\log{T^{\delta_{j}}})^{k^{2} b}
		\end{align}
		for any $1 \leq j \leq \I - 1$.
		These implicit constants are absolute.
	\end{proposition}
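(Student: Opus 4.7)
The plan is to adapt Harper's iterative argument \cite{H2013} to our setting, in which the interval $[T,2T]$ is decomposed both by the Dirichlet-polynomial length scales $\delta_{i}$ and by the zero-density filtration via $\mathcal{B}(j)$. For the measure of $\Sc(0)$: by \cref{def_calS0} we have $\Sc(0) \subseteq \mathcal{B}(1)^{c} \cup \bigcup_{\ell=1}^{\I}\mathcal{A}(1,\ell)^{c}$. The set $\mathcal{B}(1)^{c}$ is handled directly by \cref{EXDZ} with $X = T^{\delta_{1}}$: since $\delta_{1}^{-1} \gg \log T / \log \log T$ and $K$ is large, this contributes $\ll T\Phi(T)\exp(-\log T / \log \log T)$. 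For each $\mathcal{A}(1,\ell)^{c}$ I would apply Chebyshev's inequality with a moment of order $2m \asymp \delta_{1}^{-1}$ applied to $G_{(1,\ell)}$, estimated via \cref{SLL} together with the crude bound $\sum_{p \leq T^{\delta_{1}}}|\phi_{\ell}(p)|^{2}/p \ll \log\log\log T$; a union bound over $\ell \leq \I$ then gives the claimed estimate.

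Turning to the integral over $\mathcal{S}(j)$, \cref{def_calSj} shows that $t \in \mathcal{S}(j)$ forces either $t \in \mathcal{B}(j+1)^{c}$ or $t \in \mathcal{A}(j+1,\ell)^{c}$ for some $j+1 \leq \ell \leq \I$, so I split the integral into these two pieces, each producing one of the two terms in \cref{lem_Sj_1}. For the $\mathcal{B}(j+1)^{c}$ contribution, I apply H\"older's inequality with conjugate exponents $a, b$:
\begin{align}
\int_{\Sc(j) \cap \mathcal{B}(j+1)^{c}} \exp\Bigl( 2k \sum_{p \leq T^{\delta_{j}}} a_{j}(p,t) \Bigr) \d t
\leq (\meas \mathcal{B}(j+1)^{c})^{1/a} \Bigl( \int_{T}^{2T} \exp\Bigl( 2kb \sum_{p \leq T^{\delta_{j}}} a_{j}(p,t) \Bigr) \d t \Bigr)^{1/b}.
\end{align}
The first factor is handled by \cref{EXDZ} with $X = T^{\delta_{j+1}}$, which yields the $(Ke^{K}\Phi(T)/\log T)^{1/a} e^{-(\lam K - 1)\delta_{j+1}^{-1}/a}$ savings appearing in \cref{lem_Sj_1} (using $\log(1/\delta_{j+1}) \leq \delta_{j+1}^{-1}$ to absorb the $\delta_{j+1}^{-1}$ polynomial factor into the exponential). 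The second factor is controlled by adapting the argument for \cref{lem_T} with $k$ replaced by $kb$ and with the outer sum truncated at $T^{\delta_{j}}$, producing a bound of shape $T (\log T^{\delta_{j}})^{(kb)^{2}} e^{-(kb)^{2}(\log \log(\k b) + O(1))}$ before taking the $b$-th root.

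For the $\mathcal{A}(j+1,\ell)^{c}$ contribution, Markov's inequality with a high even power $2m_{\ell}$ of $G_{(j+1,\ell)}$ gives
\begin{align}
\int_{\Sc(j) \cap \mathcal{A}(j+1,\ell)^{c}} \exp\Bigl( 2k \sum_{p \leq T^{\delta_{j}}} a_{j}(p,t) \Bigr) \d t
\leq (20 e^{2} \k (\ell-j)^{2} \delta_{j+1})^{2m_{\ell}} \int_{T}^{2T} |G_{(j+1,\ell)}(t)|^{2m_{\ell}} \exp\Bigl( 2k \sum_{p \leq T^{\delta_{j}}} a_{j}(p,t) \Bigr) \d t.
\end{align}
The decisive structural point is that $G_{(j+1,\ell)}$ is supported on primes in $(T^{\delta_{j}}, T^{\delta_{j+1}}]$, which is disjoint from the support of $\sum_{p \leq T^{\delta_{j}}} a_{j}(p,t)$. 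Expanding $|G_{(j+1,\ell)}|^{2m_{\ell}}$ as a Dirichlet polynomial and the exponential as a truncated Taylor series, the orthogonality of $p^{-\i t}$ over primes (the mechanism behind \cref{SLL}) decouples the two factors: the $|G_{(j+1,\ell)}|^{2m_{\ell}}$ part contributes a moment factor of size $m_{\ell}! (C \log \log T)^{m_{\ell}}$, while the exponential part reduces to $T (\log T^{\delta_{j}})^{k^{2}} e^{-k^{2}(\log \log \k + O(1))}$ as in \cref{lem_T}. Choosing $2m_{\ell} \asymp \delta_{j+1}^{-1}$ and summing over $j+1 \leq \ell \leq \I$ produces the saving $e^{-\frac{1}{21} \delta_{j}^{-1} \log(1/\delta_{j+1})}$.

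The hardest step is the balance inside the $\mathcal{A}(j+1,\ell)^{c}$ piece: $m_{\ell}$ must be large enough that $(20 e^{2} \k (\ell-j)^{2} \delta_{j+1})^{2m_{\ell}}$ dominates both the factorial $m_{\ell}!$ and the summation over $\ell$, yet small enough that \cref{SLL} remains applicable (which forces essentially $m_{\ell} \leq 1/(2\delta_{j+1})$). Verifying that the hypothesis $T \geq e^{(1200 k \log \k)^{2}} T_{1}(K)$ leaves enough room for $m_{\ell}$ of size $\asymp \delta_{j+1}^{-1}$ uniformly for $1 \leq j \leq \I - 1$ is the essential technical calculation, and the subtle dependence on $\k = k + 3$ (entering through the thresholds in $\mathcal{A}(i,j)$ and the definition of $\I$) is precisely what fixes the constant $\frac{1}{21}$ in the exponent.
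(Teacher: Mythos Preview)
Your overall architecture matches the paper's proof: split $\Sc(j)$ according to whether the failure occurs via $\mathcal{B}(j+1)^{c}$ or via some $\mathcal{A}(j+1,\ell)^{c}$, treat the former with H\"older together with \cref{EXDZ}, and treat the latter by a high even moment of $G_{(j+1,\ell)}$ combined with the truncated Taylor expansion of the exponential. Two points, however, do not go through as written.

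The substantive gap is your moment estimate for $G_{(j+1,\ell)}$. You assert that the $|G_{(j+1,\ell)}|^{2m_{\ell}}$ factor contributes $m_{\ell}!\,(C\log\log T)^{m_{\ell}}$, but the variance here is not $\log\log T$: since the sum runs only over $T^{\delta_{j}}<p\le T^{\delta_{j+1}}$ one has
\[
\sum_{T^{\delta_{j}}<p\le T^{\delta_{j+1}}}\frac{|\phi_{\ell}(p)|^{2}}{p}\ \ll\ \frac{1}{L-j}\ =\ \delta_{j+1}^{1/8},
\]
which is much smaller. With your crude bound, after inserting the penalty $(20e^{2}\k(\ell-j)^{2}\delta_{j+1})^{2m_{\ell}}$ and taking $m_{\ell}\asymp\delta_{j+1}^{-1}$, the combined factor is essentially $(C\k^{2}\,\delta_{j+1}\log\log T)^{m_{\ell}}$; for $j+1=\I$ one has $\delta_{j+1}\asymp e^{-80A(h,\theta)\k K}$, a quantity independent of $T$, so once $\log\log T$ exceeds this fixed threshold the product is no longer small and no saving results. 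The paper instead passes through \cref{MEDP} to the random model and uses the sharp bound
\[
\EXP{G_{(j+1,\ell)}(\mathfrak{X})^{2M}}\ \le\ \sqrt{(2M)!}\Bigl(\tfrac{60}{(L-j)^{1/2}}\Bigr)^{2M}\ \ll\ e^{M\log(1/\delta_{j+1})},
\]
which, against the penalty $e^{-(6/5)M\log(1/\delta_{j+1})}$, yields the net saving $e^{-(1/5)M\log(1/\delta_{j+1})}$; the constant $\tfrac{1}{21}$ then comes from $M=\lfloor(4\delta_{j+1})^{-1}\rfloor$ and $\delta_{j+1}\le(1+o(1))\delta_{j}$. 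In short, the constant $\tfrac{1}{21}$ is fixed not by the $\k$-dependence you cite but by the sharp variance $\asymp\delta_{j+1}^{1/8}$.

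A second, more minor point: in the H\"older step for the $\mathcal{B}(j+1)^{c}$ contribution you extend the second factor to $\int_{T}^{2T}\exp\bigl(2kb\sum_{p\le T^{\delta_{j}}}a_{j}(p,t)\bigr)\,dt$, but the argument of \cref{lem_T} does not bound this unrestricted integral, since the Taylor truncation of the exponential is only valid on $\bigcap_{i\le j}\mathcal{A}(i,j)$. The fix (and what the paper does) is to keep the second factor on $\bigcap_{i\le j}\mathcal{A}(i,j)\cap\mathcal{B}(j)$, which still contains $\Sc(j)\cap\mathcal{B}(j+1)^{c}$, and then run the truncation argument there.
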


	Although there are some differences from the situation of \cite{H2013},
	\cref{lem_T,lem_Sj} involve a small improvement of Lemma 3 in \cite{H2013} for the dependence of $k$ and $T$.
	Thanks to the improvement, we can prove \cref{ES:UBMKt} in the range $k \leq \log{\log{(T / T_{0})}}$.

	\subsection{Preliminaries}

		Let $\mathcal{P}$ be the set of prime numbers.
		Let $\{\mathfrak{X}(p)\}_{p \in \mathcal{P}}$ be a sequence of independent random variables on a probability space $(\Omega, \mathscr{A}, \PP)$
		which are uniformly distributed on the unit circle in $\CC$.
		For each $1 \leq i \leq j \leq \I $, set
		\begin{align}
			\label{def_raj}
			a_{j}(p, \mathfrak{X})
			= a_{j}(p, \mathfrak{X}; T, \theta, K)
			\ceq \Re\l(\frac{\phi_{j}(p)}{p^{\frac{1}{2}}} \mathfrak{X}(p) + \frac{\psi_{j}(p)}{2 p} \mathfrak{X}(p)^{2} \r),
		\end{align}
		and
		\begin{align}
			\label{def_rGi}
			G_{(i, j)}(\mathfrak{X}) = G_{(i, j)}(\mathfrak{X}; T, \theta, K)
			\ceq \sum_{T^{\delta_{i - 1}} < p \leq T^{\delta_{i}}}a_{j}(p, \mathfrak{X}).
		\end{align}

		\begin{lemma} \label{GRLR}
			Let $\{b_{1}(m)\}, \dots, \{ b_{n}(m) \}$ be complex sequences.
			For any distinct prime numbers $q_{1}, \dots, q_{s}$ and any $k_{1, 1}, \dots k_{1, n}, \dots,  k_{s, 1}, \dots, k_{s, n} \in \ZZ_{\geq 1}$,
			we have
			\begin{align}
				&\int_{T}^{2T}\prod_{\ell = 1}^{s} \prod_{j = 1}^{n}\Re b_{j}(q_{\ell}^{k_{j, \ell}}) q_{\ell}^{-\i t k_{j, \ell}}dt\\
				& =  T\EXP{\prod_{\ell = 1}^{s} \prod_{j = 1}^{n}\Re b_{j}(q_{\ell}^{k_{j, \ell}}) \mathfrak{X}(q_{\ell})^{k_{j, \ell}}}
				+ O\l( \prod_{\ell = 1}^{s}\prod_{j = 1}^{n}q_{\ell}^{k_{j, \ell}}|b_{j}(q_{\ell}^{k_{j, \ell}})| \r).
			\end{align}
			Here, $\EXP{f}$ is the expectation of $f$.
		\end{lemma}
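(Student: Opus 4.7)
The plan is to reduce both sides to the same finite sum over sign patterns and compare term by term.

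First I would open each factor $\Re b_{j}(q_{\ell}^{k_{j,\ell}}) q_{\ell}^{-\i t k_{j,\ell}}$ as $\tfrac{1}{2}\bigl(b_{j}(q_{\ell}^{k_{j,\ell}})q_{\ell}^{-\i t k_{j,\ell}}+\overline{b_{j}(q_{\ell}^{k_{j,\ell}})}q_{\ell}^{\i t k_{j,\ell}}\bigr)$ and expand the full double product. This produces a sum over sign vectors $\epsilon=(\epsilon_{j,\ell})_{j,\ell}\in\{\pm1\}^{sn}$ of $2^{-sn}$ times a coefficient $c(\epsilon)\ceq\prod_{\ell,j}b_{j}^{(\epsilon_{j,\ell})}(q_{\ell}^{k_{j,\ell}})$ (using $b^{(+)}=b$, $b^{(-)}=\bar b$) times the oscillatory factor $\exp\bigl(-\i t\sum_{\ell}\alpha_{\ell}(\epsilon)\log q_{\ell}\bigr)$, where $\alpha_{\ell}(\epsilon)\ceq\sum_{j}\epsilon_{j,\ell}k_{j,\ell}\in\ZZ$. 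Exactly the same expansion, with $q_{\ell}^{-\i t}$ replaced by $\mathfrak{X}(q_{\ell})$, describes the expectation on the right side.

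Next I would integrate / take expectation. For the integral, each sign pattern contributes either $T\cdot c(\epsilon)$ when $A(\epsilon)\ceq\sum_{\ell}\alpha_{\ell}(\epsilon)\log q_{\ell}=0$, or an oscillatory term bounded in absolute value by $2/|A(\epsilon)|$. For the expectation, independence of the $\mathfrak{X}(q_{\ell})$ and $\EXP{\mathfrak{X}(q_{\ell})^{m}}=\mathbf{1}_{m=0}$ leave only the patterns with $\alpha_{\ell}(\epsilon)=0$ for every $\ell$. By unique factorization of integers over the distinct primes $q_{1},\dots,q_{s}$, the single condition $A(\epsilon)=0$ is equivalent to $\alpha_{\ell}(\epsilon)=0$ for all $\ell$. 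So the main terms from the two sides coincide exactly.

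It remains to bound the contribution of the non-vanishing patterns. Here the key quantitative step is a lower bound $|A(\epsilon)|\geq\bigl(\prod_{\ell,j}q_{\ell}^{k_{j,\ell}}\bigr)^{-1}$: writing $A(\epsilon)=\log(M/N)$ with positive integers $M,N$ each dividing $\prod_{\ell}q_{\ell}^{\sum_{j}k_{j,\ell}}$, the fact that $M\neq N$ forces $|M-N|\geq 1$ and hence $|A(\epsilon)|\geq 1/\max(M,N)$. Combined with the uniform bound $|c(\epsilon)|\leq\prod_{\ell,j}|b_{j}(q_{\ell}^{k_{j,\ell}})|$ and the count $2^{sn}$ of sign patterns, this produces an error $O\bigl(\prod_{\ell,j}q_{\ell}^{k_{j,\ell}}|b_{j}(q_{\ell}^{k_{j,\ell}})|\bigr)$, as claimed.

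The only non-routine point is the diophantine lower bound $|A(\epsilon)|\geq\prod_{\ell,j}q_{\ell}^{-k_{j,\ell}}$ and its use to absorb the $2^{sn}$ sign-pattern count into the absolute constant in the $O$; everything else is direct expansion and orthogonality of characters on the circle.
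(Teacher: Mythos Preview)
Your argument is correct and is the standard way to prove such a statement; the paper itself does not give a proof but simply cites it as Lemma~4.2 of \cite{IL2021}. One small remark on your last paragraph: the $2^{sn}$ sign-pattern count is not ``absorbed into the absolute constant'' but is exactly cancelled by the $2^{-sn}$ coming from the expansion of the real parts, so the implied constant is genuinely absolute (which is what the applications in the paper require).
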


		\begin{proof}
			This is Lemma 4.2 in \cite{IL2021}.
		\end{proof}

		\begin{lemma}	\label{MEDP}
			Let $T$ be large.
			Let $1 \leq j \leq \I - 1$, $j + 1 \leq \ell \leq \I$.
			For any nonnegative integers $n_{i}$ satisfying $n_{i} \leq 5^{-1}(j + 1 - i)^{-2} \delta_{i}^{-1}$ with $1 \leq i \leq j$,
			and any nonnegative integers $n_{j + 1}$ satisfying $n_{j + 1} \leq (2 \delta_{j + 1})^{-1}$, we have
			\begin{align}
				&\int_{T}^{2T} G_{(1, j)}(t)^{n_{1}} \cdots G_{(j, j)}(t)^{n_{j}} G_{(j + 1, \ell)}(t)^{n_{j + 1}} dt\\
				&= T \EXP{G_{(j + 1, \ell)}(\mathfrak{X})^{n_{j + 1}}}\prod_{1 \leq i \leq j}\EXP{G_{(i, j)}(\mathfrak{X})^{n_{i}}} + O\l( T^{1/2} \r).
			\end{align}
			The implicit constant is absolute.
		\end{lemma}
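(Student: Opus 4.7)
The plan is to expand the integrand fully as a Dirichlet polynomial in $\{p^{\pm it}\}$ and integrate term by term using \cref{GRLR}. Writing $f(i) = j$ for $1 \leq i \leq j$ and $f(j+1) = \ell$ for brevity, I would first apply the multinomial theorem to each $G_{(i, f(i))}(t)^{n_i} = \bigl(\sum_p a_{f(i)}(p, t)\bigr)^{n_i}$, and then split every $a_{f(i)}(p, t) = \Re(\phi_{f(i)}(p) p^{-1/2-it}) + \Re(\psi_{f(i)}(p) (2p)^{-1} p^{-2it})$ into its Fourier pieces. This rewrites $\prod_{i=1}^{j+1} G_{(i, f(i))}(t)^{n_i}$ as a finite sum of monomials of exactly the shape $\prod_{\ell, j'} \Re(b_{j'}(q_\ell^{k_{j', \ell}}) q_\ell^{-i k_{j', \ell} t})$ to which \cref{GRLR} directly applies.

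Applying \cref{GRLR} to each such monomial replaces the integral over $t$ by $T$ times the corresponding expectation over $\{\mathfrak{X}(p)\}$ plus an explicit error of size $O\bigl(\prod_\ell q_\ell^{k_\ell} |b_\ell|\bigr)$. Summing the main terms reassembles them into $T \cdot \EXP{\prod_{i=1}^{j+1} G_{(i, f(i))}(\mathfrak{X})^{n_i}}$; since the prime ranges $(T^{\delta_{i-1}}, T^{\delta_i}]$ are pairwise disjoint for $1 \leq i \leq j+1$ and the random variables $\{\mathfrak{X}(p)\}_p$ are independent across distinct primes, this expectation factors as $\prod_{i=1}^{j} \EXP{G_{(i, j)}(\mathfrak{X})^{n_i}} \cdot \EXP{G_{(j+1, \ell)}(\mathfrak{X})^{n_{j+1}}}$, which is exactly the claimed main term.

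The hard part will be controlling the aggregate error. Using the bounds $|\phi_{f(i)}(p)| \ll 1$ and $|\psi_{f(i)}(p)| \ll 1$, each per-monomial error reduces (up to an absolute constant) to $\prod_p p^{r_{p, 1}/2 + r_{p, 2}}$, where $r_{p, a}$ counts the occurrences of prime $p$ with frequency type $a \in \{1, 2\}$. Because $\sum_{p \in (T^{\delta_{i-1}}, T^{\delta_i}]} (r_{p, 1} + r_{p, 2}) \leq n_i$ for each $i$, a single monomial contributes at most $T^{\sum_i \delta_i n_i}$, and the hypotheses give $\sum_i \delta_i n_i \leq \sum_{m \geq 1} (5 m^2)^{-1} + 1/2 < 1$. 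To actually obtain the desired aggregate bound $O(T^{1/2})$, one must re-sum carefully over all monomials, tracking the multinomial coefficients and exploiting the cancellation that occurs at each prime when the $\Re$-expansions of the $\pm it$ pieces are combined; this combinatorial bookkeeping is the most delicate step of the proof.
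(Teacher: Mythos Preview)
Your approach is essentially the paper's: expand each $G_{(i,f(i))}(t)^{n_i}$ over ordered tuples of primes, split each $a_{f(i)}(p,t)$ into its $\phi$- and $\psi$-pieces, apply \cref{GRLR} term by term, and reassemble the main terms into $T\prod_i \EXP{G_{(i,f(i))}(\mathfrak{X})^{n_i}}$ using the disjointness of the prime intervals $(T^{\delta_{i-1}},T^{\delta_i}]$ and the independence of the $\mathfrak{X}(p)$.

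Where you overstate the difficulty is in the final paragraph. No cancellation is needed for the error. Once you sum the \cref{GRLR} errors in absolute value over all $\phi/\psi$ choices and then over all ordered prime tuples $(p_{i,1},\dots,p_{i,n_i})_{i}$, the sum factors completely as
\[
\prod_{i=1}^{j+1}\Bigl(\sum_{T^{\delta_{i-1}}<p\le T^{\delta_i}}(8p^{1/2}+1)\Bigr)^{n_i},
\]
which the paper then bounds directly by $T^{(\sum_i\delta_i n_i)/2}\le T^{\zeta(2)/10+1/4}\le T^{1/2}$ via the prime number theorem and partial summation. There are no multinomial coefficients to track, because the expansion is over ordered tuples rather than multisets, and the factorization of the aggregate error is immediate; the ``delicate combinatorial bookkeeping'' you anticipate simply does not arise.
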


		\begin{proof}
			We write
			\begin{align}
				&\int_{T}^{2T} G_{(1, j)}(t)^{n_{1}} \cdots G_{(j, j)}(t)^{n_{j}} G_{(j + 1, \ell)}(t)^{n_{j + 1}} dt =\\
				&\sum_{T^{\delta_{0}} < p_{1, 1}, \dots, p_{1, n_{1}} \leq T^{\delta_{1}}} \cdots \sum_{T^{\delta_{j}} < p_{j + 1, 1}, \dots, p_{j + 1, n_{j + 1}} \leq T^{\delta_{j + 1}}}
				\int_{T}^{2T} a_{*}(p_{1, 1}, t) \cdots a_{*}(p_{j + 1, n_{j + 1}}, t)dt,
			\end{align}
			where $* = j$ if its argument is $p_{i, k}$ with $1 \leq i \leq j$, and $* = \ell$ otherwise.
			The integral on the right hand side can be represented by
			\begin{align}
				\sum_{\substack{\e_{1, 1, 1}, \dots, \e_{1, j+1, n_{j + 1}} \in \{0, 1\},\\
						\e_{2, 1, 1}, \dots, \e_{2, j+1, n_{j + 1}} \in \{0, 1\}, \\
						\e_{1, i, k} + \e_{2, i, k} = 1}}
				\int_{T}^{2T}\prod_{\substack{1 \leq i \leq j + 1 \\ 1 \leq k \leq n_{i}}}
				\l( \Re \frac{\phi_{*}(p_{i, k})}{p_{i, k}^{1/2}} p_{i, k}^{- \i t} \r)^{\e_{1, i, k}}
				\l( \Re \frac{\psi_{*}(p)}{2 p_{i, k}}p_{i, k}^{- 2 \i t} \r)^{\e_{2, i, k}}dt.
			\end{align}
			Using \cref{GRLR}, we find that this is equal to
			\begin{align}
				&\sum_{\substack{\e_{1, 1, 1}, \dots, \e_{1, j + 1, n_{j + 1}} \in \{0, 1\},\\
						\e_{2, 1, 1}, \dots, \e_{2, j + 1, n_{j + 1}} \in \{0, 1\},
						\\ \e_{1, i, k} + \e_{2, i, k} = 1}}
				T \cdot \EXP{\prod_{\substack{1 \leq i \leq j + 1 \\ 1 \leq k \leq n_{i}}}
					\l( \Re \frac{\phi_{*}(p_{i, k})}{p_{i, k}^{1/2}} \mathfrak{X}(p_{i, k}) \r)^{\e_{1, i, k}}
					\l( \Re \frac{\psi_{*}(p)}{2 p_{i, k}}\mathfrak{X}(p_{i, k})^{2} \r)^{\e_{2, i, k}}}\\
				& + O\l(\sum_{\substack{\e_{1, 1, 1}, \dots, \e_{1, j + 1, n_{j + 1}} \in \{0, 1\},\\
						\e_{2, 1, 1}, \dots, \e_{2, j + 1, n_{j + 1}} \in \{0, 1\},\\
						\e_{1, i, k} + \e_{2, i, k} = 1}}
				\prod_{\substack{1 \leq i \leq j + 1 \\ 1 \leq k \leq n_{i}}}( |\phi_{*}(p_{i, k})|p_{i, k}^{1/2})^{\e_{1, i, k}} \r)\\
				&= T \EXP{a_{*}(p_{1, 1}, \mathfrak{X}) \cdots a_{*}(p_{j + 1, n_{j + 1}}, \mathfrak{X})}
				+O\l( (8 p_{1, 1}^{1/2} + 1) \cdots (8 p_{j + 1, n_{j + 1}}^{1/2} + 1) \r).
			\end{align}
			Remark that we used the inequalities $|\phi_{j}(p)| \leq 8$, $|\psi_{j}(p)| \leq 1$ in the above deformation.
			Therefore, we have
			\begin{align}
				&\int_{T}^{2T} G_{(1, j)}(t)^{n_{1}} \cdots G_{(j, j)}(t)^{n_{j}} G_{(j + 1, \ell)}(t)^{n_{j + 1}} dt\\
				&= \EXP{G_{(1, j)}(\mathfrak{X})^{n_{1}} \cdots G_{(j, j)}(\mathfrak{X})^{n_{j}} G_{(j + 1, \ell)}(\mathfrak{X})^{n_{j + 1}}}
				+ O\l( \prod_{i = 1}^{j + 1}\l(\sum_{T^{\delta_{i - 1}} < p \leq T^{\delta_{i}}}(8 p^{1/2} + 1)\r)^{n_{i}} \r).
			\end{align}
			By the independence of $\{ \mathfrak{X}(p) \}$, the above main term is equal to
			\begin{align}
				T \EXP{G_{(j + 1, \ell)}(\mathfrak{X})^{n_{j + 1}}}\prod_{1 \leq i \leq j}\EXP{G_{(i, j)}(\mathfrak{X})^{n_{i}}}.
			\end{align}
			Moreover, we use the prime number theorem and partial summation to obtain
			\begin{align}
				\prod_{i = 1}^{j + 1}\l(\sum_{T^{\delta_{i - 1}} < p \leq T^{\delta_{i}}}(8 p^{1/2} + 1)\r)^{n_{i}}
				\leq T^{(\delta_{1} n_{1} + \cdots + \delta_{j + 1} n_{j + 1})/2}
				\leq T^{\zeta(2)/10 + 1/4} \leq T^{1/2},
			\end{align}
			which completes the proof of \cref{MEDP}.
		\end{proof}

		\begin{lemma} \label{UBDRP}
			Let $\{a(p)\}_{p \in \mathcal{P}}$ be a complex sequence.
			Then, for any $k \in \ZZ_{\geq 1}$, $\ell \in \ZZ \setminus \{ 0 \}$, $X \geq 3$, we have
			\begin{align}
				\EXP{\bigg| \sum_{p \leq X}a(p)\mathfrak{X}(p)^{\ell} \bigg|^{2k}}
				\leq k! \l( \sum_{p \leq X}|a(p)|^2 \r)^{k}.
			\end{align}
		\end{lemma}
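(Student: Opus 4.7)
The plan is to expand the $2k$-th absolute moment into a $2k$-fold sum over tuples of primes, exploit independence and the uniform distribution on the unit circle to kill most terms, and finish with a crude application of the multinomial theorem.

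First I would write
\begin{align}
\EXP{\bigg|\sum_{p \leq X} a(p) \mathfrak{X}(p)^{\ell}\bigg|^{2k}}
= \sum_{p_1,\dots,p_k} \sum_{q_1,\dots,q_k} a(p_1)\cdots a(p_k) \overline{a(q_1)}\cdots \overline{a(q_k)} \,
\EXP{\prod_{i=1}^{k} \mathfrak{X}(p_i)^{\ell} \overline{\mathfrak{X}(q_i)}^{\ell}},
\end{align}
where all sums are over primes $\leq X$. By the independence of $\{\mathfrak{X}(p)\}_{p\in\mathcal{P}}$, the expectation factors as a product over primes. For each prime $q$, writing $m_q$ for the multiplicity of $q$ in $(p_1,\dots,p_k)$ and $n_q$ for its multiplicity in $(q_1,\dots,q_k)$, the factor at $q$ is $\EXP{\mathfrak{X}(q)^{\ell(m_q - n_q)}}$. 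Since $\mathfrak{X}(q)$ is uniform on the unit circle and $\ell \neq 0$, this expectation is $1$ if $m_q = n_q$ and $0$ otherwise.

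Therefore only terms where the multisets $\{p_1,\dots,p_k\}$ and $\{q_1,\dots,q_k\}$ coincide contribute, and for each such multiset with multiplicities $(m_q)_q$ summing to $k$, the number of ordered tuples giving it is the multinomial coefficient $k!/\prod_q m_q!$. This yields the exact identity
\begin{align}
\EXP{\bigg|\sum_{p \leq X} a(p) \mathfrak{X}(p)^{\ell}\bigg|^{2k}}
= \sum_{\substack{(m_q)_q \\ \sum_q m_q = k}} \frac{(k!)^{2}}{\prod_q (m_q!)^{2}} \prod_q |a(q)|^{2 m_q}.
\end{align}

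Finally, since $\prod_q m_q! \geq 1$, I can bound $(k!)^{2}/\prod_q (m_q!)^{2} \leq k! \cdot k!/\prod_q m_q!$, and the multinomial theorem gives
\begin{align}
\sum_{\substack{(m_q)_q \\ \sum_q m_q = k}} \frac{k!}{\prod_q m_q!} \prod_q |a(q)|^{2 m_q}
= \bigg(\sum_{p \leq X} |a(p)|^2\bigg)^{k},
\end{align}
which yields the stated inequality. There is no real obstacle here; the only subtlety is the hypothesis $\ell \neq 0$, which is exactly what ensures the vanishing $\EXP{\mathfrak{X}(q)^{\ell(m_q - n_q)}} = 0$ when $m_q \neq n_q$, so that diagonal pairing of multisets is forced.
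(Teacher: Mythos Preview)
Your proof is correct and follows essentially the same approach as the paper: expand the $2k$-th moment, use independence and uniform distribution to see that only diagonal terms (matching multisets) survive, and bound the combinatorics by $k!$ times the multinomial expansion of $(\sum_{p\le X}|a(p)|^2)^k$. The only cosmetic difference is that you write out the exact intermediate identity $\sum_{(m_q)} \frac{(k!)^{2}}{\prod_q (m_q!)^{2}} \prod_q |a(q)|^{2 m_q}$ and then bound it, whereas the paper skips this and directly observes that for each ordered tuple $(p_1,\dots,p_k)$ there are at most $k!$ matching tuples $(q_1,\dots,q_k)$.
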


		\begin{proof}
			Since $\mathfrak{X}(p)$'s are independent and uniformly distributed on the unit circle in $\CC$, it holds that
			\begin{align}	\label{BEUIR}
				\EXP{ \frac{\mathfrak{X}(p_1)^{a_{1}} \cdots \mathfrak{X}(p_{k})^{a_{k}}}{ \mathfrak{X}(q_1)^{b_1} \cdots \mathfrak{X}(q_{\ell})^{b_{\ell}} } }
				=
				\begin{cases}
					1 & \text{if $p_{1}^{a_1} \cdots p_{k}^{a_{k}} = q_{1}^{b_1} \cdots q_{\ell}^{b_\ell}$},    \\
					0 & \text{if $p_{1}^{a_1} \cdots p_{k}^{a_{k}} \neq q_{1}^{b_1} \cdots q_{\ell}^{b_\ell}$}.
				\end{cases}
			\end{align}
			Hence, we have
			\begin{align}
				\EXP{ \bigg| \sum_{p \leq X}a(p) \mathfrak{X}(p)^{\ell} \bigg|^{2k} }
				& = \sum_{\substack{p_1, \dots, p_{k} \leq X \\ q_{1}, \dots, q_{k} \leq X}}
				a(p_1) \cdots a(p_{k}) \ol{a(q_{1}) \cdots a(q_{k})}
				\EXP{\frac{\mathfrak{X}^{\ell}(p_1) \cdots \mathfrak{X}^{\ell}(p_{k}) }{ \mathfrak{X}^{\ell}(q_{1}) \cdots \mathfrak{X}^{\ell}(q_{k})} } \\
				& \leq k!\sum_{p_{1}, \dots, p_{k} \leq X}|a(p_{1})|^2 \cdots |a(p_{k})|^{2}
				\leq k! \l( \sum_{p \leq X}|a(p)|^2 \r)^{k},
			\end{align}
			which completes the proof of the lemma.
		\end{proof}

	\subsection{Proofs of \cref{lem_T,lem_Sj}}

		\begin{proof}[Proof of \cref{lem_T}]
			Denote the integral we evaluate by $I$.
			Put $B_{i} = 10^{-1}(\I + 1 - i)^{-2} \delta_{i}^{-1}$.
			Using Taylor's theorem, we have $e^{x} = \l( 1 - \frac{x^{N + 1}}{(N + 1)!}e^{\xi x} \r)^{-1}\sum_{n = 1}^{N}\frac{x^{n}}{n!}$
			for some $\xi \in (0, 1)$.
			By this formula, it holds for $t \in \mathcal{T}$ that
			\begin{align}
				\label{INE:EXTR}
				& \exp\l( 2k \Re \sum_{p \leq T^{\delta_{\I}}}a_{\I}(p, t) \r)
				= \prod_{1 \leq i \leq \I}\exp\l( 2k G_{(i, \I )}(t) \r)\\
				&\leq \prod_{1 \leq i \leq \I}\l(\l( 1 - e^{- B_{i}/2} \r)^{-1}\sum_{0 \leq n \leq B_{i}}\frac{(k G_{(i, \I )}(t))^{n}}{n!}\r)^{2}
				\leq C \prod_{1 \leq i \leq \I}\l(\sum_{0 \leq n \leq B_{i}}\frac{(k G_{(i, \I )}(t))^{n}}{n!}\r)^{2}
			\end{align}
			with $C > 0$ an absolute constant.
			Therefore, we have
			\begin{align}   \label{BD_I1}
				I
				\ll \int_{T}^{2T}\prod_{1 \leq i \leq \I} \l(\sum_{0 \leq n \leq B_{i}}\frac{(k G_{(i, \I )}(t))^{n}}{n!}\r)^{2}dt
				= \int_{T}^{2T}\prod_{1 \leq i \leq \I} \sum_{0 \leq m \leq B_{i}}\sum_{0 \leq n \leq B_{i}}
				\frac{(k G_{(i, \I )}(t))^{m + n}}{m! n!}dt.
			\end{align}
			The integral on the right hand side is
			\begin{align}
				& = \int_{T}^{2T} \sum_{0 \leq m_{1} \leq B_{1}} \sum_{0 \leq n_{1} \leq B_{1}}
				\cdots \sum_{0 \leq m_{\I} \leq B_{\I}} \sum_{0 \leq n_{\I} \leq B_{\I}}
				\frac{(k G_{(1, \I )}(t))^{m_{1} + n_{1}} \cdots (k G_{(\I, \I )}(t))^{m_{\I} + n_{\I}}}{m_{1}! n_{1}! \cdots m_{\I}! n_{\I}!}dt\\
				& = \sum_{0 \leq m_{1} \leq B_{1}} \cdots \sum_{0 \leq n_{\I} \leq B_{\I}}
				\frac{k^{m_{1} + n_{1} + \cdots + m_{\I} + n_{\I}}}{m_{1}! n_{1}! \cdots m_{\I} n_{\I}!}
				\int_{T}^{2T}G_{(1, \I )}(t)^{m_{1} + n_{1}} \cdots G_{(\I , \I )}(t)^{m_{\I} + n_{\I}} dt.
			\end{align}
			Using \cref{MEDP}, we find that
			\begin{align}
				& \int_{T}^{2T} G_{(1, \I )}(t)^{m_{1} + n_{1}} \cdots G_{(\I , \I )}(t)^{m_{\I} + n_{\I}} dt
				= T \prod_{1 \leq i \leq \I}\EXP{G_{(i, \I )}(\mathfrak{X})^{m_{i} + n_{i}}} + O\l( T^{1/2} \r).
			\end{align}
			Hence, we have
			\begin{align}
				\label{BdI2}
				I
				&\ll T \prod_{1 \leq i \leq \I}\sum_{0 \leq m \leq B_{i}} \sum_{0 \leq n \leq B_{i}}
				\frac{k^{m + n}}{m! n!}\EXP{G_{(i, \I )}(\mathfrak{X})^{m + n}}
				+ e^{2k \I}T^{1/2}.
			\end{align}
			The second term on the right hand side is $\leq T$
			since $\I \leq L = (\log{T})^{1/8}$ and $T \geq e^{(1000k\log{\k})^{2}}T_{0} \geq e^{k^{2}}$.

			Write
			\begin{align}
				\label{SMEI}
				\prod_{1 \leq i \leq \I}\sum_{0 \leq m \leq B_{i}} \sum_{0 \leq n \leq B_{i}}
				\frac{k^{m + n}}{m! n!}\EXP{G_{(i, \I )}(\mathfrak{X})^{m + n}}
				= M - \sum_{1 \leq \ell \leq \I}E(\ell),
			\end{align}
			where
			\begin{align}
				M = \prod_{1 \leq i \leq \I}\sum_{m = 0}^{\infty} \sum_{n = 0}^{\infty}\frac{k^{m + n}}{m! n!}\EXP{G_{(i, \I)}(\mathfrak{X})^{m + n}}
				= \EXP{\exp\l( 2k\sum_{1 \leq i \leq \I}G_{(i, \I)}(\mathfrak{X}) \r)},
			\end{align}
			and
			\begin{align}
				E(\ell)
				&= \l(\prod_{1 \leq i \leq \ell - 1}\sum_{m = 0}^{\infty}\sum_{n = 0}^{\infty}
				\frac{k^{m + n}}{m! n!}\EXP{G_{(i, \I)}(\mathfrak{X})^{m + n}}\r)\\
				&\qquad\times \l(2\sum_{m = 0}^{\infty} - \sum_{m > B_{\ell}}\r)\sum_{n > B_{\ell}}
					\frac{k^{m + n}}{m! n!}\EXP{G_{(\ell, \I)}(\mathfrak{X})^{m + n}}\\
				&\qquad \times \l(\prod_{\ell + 1 \leq i \leq \I}\sum_{0 \leq m \leq B_{i}}\sum_{0 \leq n \leq B_{i}}
					\frac{k^{m + n}}{m! n!}\EXP{G_{(i, \I)}(\mathfrak{X})^{m + n}}\r).
			\end{align}
			We see, by the independence of $\mathfrak{X}(p)$, that
			\begin{align}
				M
				= \prod_{p \leq T^{\delta_{\I}}}\EXP{\exp\l(2k a_{\I}(p, \mathfrak{X}) \r)}.
			\end{align}
			It follows from $|a_{\I}(p, \mathfrak{X})| \leq \frac{|\phi_{j}(p)|}{p^{1/2}} + \frac{|\psi_{j}(p)|}{2p} \leq \frac{9}{p^{1/2}}$ that
			\begin{align}
				\prod_{p \leq (18 k)^{2}}\EXP{\exp\l(2k a_{\I}(p, \mathfrak{X}) \r)}
				\leq e^{O(k^2)}.
			\end{align}
			Moreover, we use the formulas $\EXP{a_{\I}(p, \mathfrak{X})} = 0$,
			\begin{align}
				\EXP{a_{\I}(p, \mathfrak{X})^{2}}
				= \dfrac{w_{T^{\delta_{\I}}}(p)^{2}}{2 p^{1 + 2K / h \log{T^{\delta_{\I}}}}}
				+ O\l( \frac{1}{p^{1 + K / h \log{T^{\delta_{\I}}}}} \frac{K \log{p}}{\log{T^{\delta_{\I}}}} \r),
			\end{align}
			\begin{align}
				\sum_{p \leq X}\frac{1}{p^{2\s}}
				= \log\l( \min\l\{ \frac{1}{\s - \frac{1}{2}}, \log{X} \r\} \r) + O(1), \qquad
				\sum_{p \leq X}\frac{\log{p}}{p^{2\s}}
				\ll \frac{1}{\s - \frac{1}{2}}
			\end{align}
			for $\s > \frac{1}{2}$ to obtain
			\begin{align}
				&\prod_{(18k)^{2} < p \leq T^{\delta_{\I}}}\EXP{\exp\l(2k a_{\I}(p, \mathfrak{X}) \r)}\\
				&= \prod_{(18k)^{2} < p \leq T^{\delta_{\I}}}\EXP{1 + 2k a_{\I}(p, \mathfrak{X}) + 2 k^{2} a_{\I}(p, \mathfrak{X})^{2}
				+ O\l( \frac{k^{3}}{p^{3/2}} \r)}\\
				&= \prod_{(18k)^{2} < p \leq T^{\delta_{\I}}}\l( 1 + \frac{k^{2} w_{T^{\delta_{\I}}}(p)^{2}}{p^{1 + 2K / h \log{T^{\delta_{\I}}}}}
				+ O\l( \frac{k^{2}}{\log{T^{\delta_{\I}}}}\frac{K \log{p}}{p^{1 + K / h \log{T^{\delta_{\I}}}}} + \frac{k^{3}}{p^{3/2}} \r) \r)\\
				&= \exp\l( k^{2}\sum_{(18k)^{2} < p \leq T^{\delta_{\I} / 3}}\frac{1}{p^{{1 + 2K / h \log{T^{\delta_{\I}}}}}} + O(k^{2}) \r)
				= e^{-k^{2}(\log{\log{\k}} + O(1))} (\log{T^{\delta_{\I}}})^{k^{2}}.
			\end{align}
			Hence, we have
			\begin{align}
				\label{EMI}
				M = e^{-k^{2}(\log{\log{\k}} + O(1))} (\log{T^{\delta_{\I}}})^{k^{2}}.
			\end{align}

			Next, we consider $E(m)$.
			Our goal is to show that
			\begin{align}
				\label{EETI}
				\bigg|\sum_{1 \leq \ell \leq \I}E(\ell)\bigg|
				\ll e^{-k^2(\log{\log{\k}} + O(1))}(\log{T^{\delta_{\I}}})^{k^{2}}.
			\end{align}
			By the Cauchy-Schwarz inequality, \cref{UBDRP}, the inequalities $|\phi_{j}(p)| \leq 8$, $|\psi_{j}(p)| \leq 1$,
			and routine calculations, we find that
			\begin{align}
				\EXP{\bigg|\sum_{T^{\delta_{i - 1}} < p \leq T^{\delta_{i}}} \frac{\phi_{\I}(p)}{p^{1/2}} \mathfrak{X}(p)\bigg|^{n}}
				&\leq (n!)^{1/2} \l( \sum_{T^{\delta_{i - 1}} < p \leq T^{\delta_{i}}}\frac{|\phi_{\I}(p)|^{2}}{p} \r)^{n/2}\\
				&\leq \l\{
				\begin{array}{ll}
					(n!)^{1/2}(9^{2} \log{\log{\log{T}}})^{n/2} 						& \text{if \ $i = 1$,}\\
					(n!)^{1/2}\l(9^{3} / (L - i)\r)^{n/2}	& \text{if \ $2 \leq i \leq \I$,}
				\end{array}
				\r.
			\end{align}
			and that for any $1 \leq i \leq \I$,
			\begin{align}
				\EXP{\bigg|\sum_{T^{\delta_{i - 1}} < p \leq T^{\delta_{i}}} \frac{\psi_{\I}(p)}{2p} \mathfrak{X}(p)^{2}\bigg|^{n}}
				\leq (n!)^{1/2} \l( \sum_{p > T^{\delta_{i - 1}}}\frac{1}{4p^{2}} \r)^{\frac{n}{2}}
				\leq (n!)^{1/2} \l( \frac{1}{T^{\delta_{i - 1}}} \r)^{\frac{n}{2}}.
			\end{align}
			Therefore, we find that
			\begin{align}
				&\EXP{|G_{(i, \I)}(\mathfrak{X})|^{n}}\\
				&\leq 2^{n} \EXP{\bigg|\sum_{T^{\delta_{i - 1}} < p \leq T^{\delta_{i}}} \frac{\phi_{\I}(p)}{p^{1/2}} \mathfrak{X}(p)\bigg|^{n}}
				+ 2^{n}\EXP{\bigg|\sum_{T^{\delta_{i - 1}} < p \leq T^{\delta_{i}}} \frac{\psi_{\I}(p)}{2p} \mathfrak{X}(p)^{2}\bigg|^{n}}\\
				\label{nMG1}
				&\leq \l\{
				\begin{array}{cl}
					20^{n} (n!)^{1/2}(\log{\log{\log{T}}})^{n / 2}		& \text{if \; $i = 1$,}\\
					(n!)^{1/2}\l( \frac{60}{(L - i + 1)^{1/2}} \r)^{n}	& \text{if \; $2 \leq i \leq \I$.}
				\end{array}
				\r.
			\end{align}

			If $\I = 1$, we see that
			\begin{align}
				&\bigg|\sum_{1 \leq \ell \leq \I}E(\ell)\bigg|
				= \l|\l( 2\sum_{m = 0}^{\infty} - \sum_{m > B_{1}} \r)\sum_{n > B_{1}}
				\frac{k^{m + n}}{m! n!}\EXP{G_{(1, 1)}(\mathfrak{X})^{m + n}}\r|\\
				&\leq 2 \l|\EXP{\exp\l( k G_{(1, 1)}(\mathfrak{X}) \r) \times \sum_{n > B_{1}}\frac{k^{n}}{n!}G_{(1, 1)}(\mathfrak{X})^{n}}\r|
				+ \EXP{\sum_{m > B_{1}} \sum_{n > B_{1}}\frac{k^{m + n}}{m! n!}G_{(1, 1)}(\mathfrak{X})^{m + n}}.
			\end{align}
			Using the Cauchy-Schwarz inequality, we also find that the first term is
			\begin{align}
				&\leq 2 \l(\EXP{\exp\l( 2k G_{(1, 1)}(\mathfrak{X}) \r)}\r)^{1/2}
				\times \l(\EXP{\l(\sum_{n > B_{1}}\frac{k^{n}}{n!}G_{(1, 1)}(\mathfrak{X})^{n}\r)^{2}}\r)^{1/2}\\
				&= 2 \l(\EXP{\exp\l( 2k G_{(1, 1)}(\mathfrak{X}) \r)}\r)^{1/2}
				\times \l(\sum_{m > B_{1}} \sum_{n > B_{1}}\frac{k^{m + n} }{m! n!}\EXP{G_{(1, 1)}(\mathfrak{X})^{m + n}}\r)^{1/2}.
			\end{align}
			Therefore, if $\I = 1$, we have
			\begin{align}
				\bigg|\sum_{1 \leq \ell \leq \I}E(\ell)\bigg|
				&\leq \l(2 \l(\EXP{\exp\l(2 k G_{(1, 1)}(\mathfrak{X}) \r)}\r)^{1/2}
				+ \l(\sum_{m > B_{1}} \sum_{n > B_{1}}\frac{k^{m + n}}{m! n!}\EXP{G_{(1, 1)}(\mathfrak{X})^{m + n}}\r)^{1/2} \r)\\
				\label{ES:I11}
				&\qquad\times \l(\sum_{m > B_{1}} \sum_{n > B_{1}}\frac{k^{m + n}}{m! n!}\EXP{G_{(1, 1)}(\mathfrak{X})^{m + n}}\r)^{1/2}.
			\end{align}
			It follows from \cref{nMG1} and the inequality $n! \geq (n / e)^{n}$ that
			\begin{align}
				&\sum_{m > B_{1}} \sum_{n > B_{1}}\frac{k^{m + n}}{m! n!}\EXP{G_{(1, 1)}(\mathfrak{X})^{m + n}}
				\leq \l( \sum_{n > B_{1}} (40 k \sqrt{\log{\log{\log{T}}}}/ \sqrt{n})^{n} \r)^{2}\\
				&\leq \l( \sum_{n > B_{1}} \l(\frac{130 k \sqrt{\log{\log{T}} \log{\log{\log{T}}}}}{\sqrt{\log{T}}}\r)^{n} \r)^{2}
				\leq \exp\l(- \frac{\log{T}}{\log{\log{T}}}\r)
			\end{align}
			under the assumptions $\I = 1$ and $T \geq e^{(1000 k \log{\k})^{2} } T_{0}$.
			As with \cref{EMI}, we also have
			\begin{align}
				\EXP{\exp\l(2 k G_{(1, 1)}(\mathfrak{X}) \r)}
				= e^{-k^{2}(\log{\log{\k}} + O(1))} (\log{T^{\delta_{1}}})^{k^{2}}.
			\end{align}
			Hence, we obtain
			\begin{align}
				\label{ES:I12}
				&\bigg|\sum_{1 \leq \ell \leq \I}E(\ell)\bigg|\\
				&\leq \l(2 e^{-k^{2}(\log{\log{\k}} + O(1))/2} (\log{T^{\delta_{1}}})^{k^{2}/2} + \exp\l(-\frac{\log{T}}{2\log{\log{T}}}\r) \r)
				\times \exp\l(-\frac{\log{T}}{2 \log{\log{T}}}\r),
			\end{align}
			which deduces \cref{EETI} if $\I = 1$ and $T \geq e^{(1000 k \log{\k})^{2}}T_{0}$.
			Next, we show \cref{EETI} by assuming $\I \geq 2$.
			This assumption leads to $\delta_{\I}^{-1} \geq (2 \delta_{\I - 1})^{-1} \geq e^{80 A(h, \theta) \k K} / 2$,
			$T \geq \exp(e^{80 A(h, \theta) \k K})$,
			and to $(L - i + 1)^{1/2} = \delta_{i}^{-1/16} \geq \delta_{\I}^{-1/16} \geq (\delta_{\I-1}/2)^{-1/16} \geq e^{5 A(h, \theta) \k K} / 2$
			for any $2 \leq i \leq \I$.
			Using \cref{nMG1}, we then find that
			\begin{align}
				\label{ES_E1g}
				|E(1)|
				&\leq \l\{2 e^{-k^{2}(\log{\log{\k}} + O(1))/2}(\log{T^{\delta_{1}}})^{k^{2}/2}
				+ \exp\l(-\frac{1}{2}\l(\frac{\log{T}}{\log{\log{T}}}\r)^{3/4}\r)\r\}\\
				&\qquad \times \exp\l(-\frac{1}{2}\l(\frac{\log{T}}{\log{\log{T}}}\r)^{3/4}\r)
				\times \prod_{2 \leq i \leq \I} \l(\sum_{n = 0}^{\infty}\frac{1}{(n!)^{1/2}}\l( \frac{60 k}{(L - i + 1)^{1/2}} \r)^{n}\r)^{2}.
			\end{align}
			similar to \cref{ES:I12}.
			In this case, the power $1$ of $\log{T} / \log{\log{T}}$ in \cref{ES:I12} is changed to $3/4$.
			This change comes from that we used the inequality $\I \leq L = (\log{T} / \log{\log{T}})^{1/8}$, but $\I = 1$ in \cref{ES:I12}.
			By simple calculations, the right hand side of \cref{ES_E1g} is
			\begin{align}
				\label{ESE1}
				\leq \exp\l( - \tfrac{1}{4}\l(\frac{\log{T}}{\log{\log{T}}}\r)^{3/4} \r) \times 2^{2\I} %\exp\l( 200 k L^{1/2} \r)
				\leq \exp\l( - \tfrac{1}{8}\l(\frac{\log{T}}{\log{\log{T}}}\r)^{3/4} \r)
			\end{align}
			As with \cref{ES:I11}, we also find that
			\begin{align}
				|E(\ell)|
				&\leq \EXP{\exp\l( 2k\sum_{1 \leq i \leq \ell - 1}G_{(i, \I)}(\mathfrak{X}) \r)}\\
				&\qquad \times \l(2 \l(\EXP{\exp\l(2 k G_{(\ell, \I)}(\mathfrak{X}) \r)}\r)^{1/2}
				+ \l(\sum_{m > B_{\ell}} \sum_{n > B_{\ell}}\frac{k^{m + n}}{m! n!}\EXP{G_{(\ell, \I)}(\mathfrak{X})^{m + n}}\r)^{1/2} \r)\\
				&\qquad\times \l(\sum_{m > B_{\ell}} \sum_{n > B_{\ell}}\frac{k^{m + n}}{m! n!}\EXP{G_{(\ell, \I)}(\mathfrak{X})^{m + n}}\r)^{1/2}\\
				&\qquad\times \prod_{\ell + 1 \leq i \leq \I}\sum_{m = 0}^{\infty}\sum_{n = 0}^{\infty}
				\frac{k^{m + n}}{m! n!}\EXP{|G_{(i, \I)}(\mathfrak{X})|^{m + n}}
			\end{align}
			for $2 \leq \ell \leq \I $. Moreover, we deduce from \cref{nMG1} that
			\begin{align}
				\sum_{m > B_{\ell}} \sum_{n > B_{\ell}}\frac{k^{m + n}}{m! n!}\EXP{G_{(\ell, \I)}(\mathfrak{X})^{m + n}}
				&\leq \l( \sum_{n > B_{\ell}}\frac{1}{(n!)^{1/2}}\l( \frac{60k}{(L - \ell + 1)^{1/2}} \r)^{n} \r)^{2}
				\leq e^{-B_{\ell}},
			\end{align}
			and that
			\begin{align}
				\prod_{\ell + 1 \leq i \leq \I}\sum_{m = 0}^{\infty}\sum_{n = 0}^{\infty}
				\frac{k^{m + n}}{m! n!}\EXP{|G_{(i, \I)}(\mathfrak{X})|^{m + n}}
				&\leq \l( \prod_{\ell + 1 \leq i \leq \I}\sum_{n = 0}^{\infty}\frac{1}{(n!)^{1/2}}\l( \frac{60k}{(L - \ell + 1)^{1/2}} \r)^{n} \r)^{2}\\
				&\leq 2^{2(\I - \ell)}
				\leq e^{2(\I - \ell)}.
			\end{align}
			Moreover, we can show that
			\begin{align}
				\EXP{\exp\l(2 k G_{(\ell, \I)}(\mathfrak{X}) \r)}
				= e^{-k^{2}(\log{\log{\k}} + O(1))}(\delta_{\ell} / \delta_{\ell - 1})^{k^{2}}
				= e^{-k^{2}(\log{\log{\k}} + O(1))}
				\ll 1,
			\end{align}
			and that
			\begin{align}
				\EXP{\exp\l( 2k\sum_{1 \leq i \leq \ell - 1}G_{(i, \I)}(\mathfrak{X}) \r)}
				= e^{-k^2(\log{\log{\k}} + O(1))}(\log{T^{\delta_{\ell - 1}}})^{k^{2}}
			\end{align}
			similar to \cref{EMI}.
			Therefore, we have
			\begin{align}
				|E(\ell)|
				\ll e^{-\frac{B_{\ell}}{2} + 2(\I - \ell)}e^{-k^2(\log{\log{\k}} + O(1))}(\log{T^{\delta_{\ell - 1}}})^{k^{2}}
			\end{align}
			for $2 \leq \ell \leq \I$.
			It holds by the definitions of $\delta_{j}$, $B_{j}$, and $\I$ that
			$B_{\ell} \geq 10^{-1} \delta_{\ell}^{-3/4} = 10^{-1} \delta_{\ell}^{-5/8} (L - \ell + 1) \geq 4(\I - \ell)$.
			Hence, we obtain
			\begin{align}
				\bigg| \sum_{1 \leq \ell \leq \I}E(\ell) \bigg|
				\ll \sum_{2 \leq \ell \leq \I}e^{-\frac{B_{\ell}}{4}}e^{-k^2(\log{\log{\k}} + O(1))}(\log{T^{\delta_{\ell - 1}}})^{k^{2}}
				\ll e^{-k^2(\log{\log{\k}} + O(1))}(\log{T^{\delta_{\I}}})^{k^{2}}.
			\end{align}
			This estimate and \cref{ESE1} imply \cref{EETI}.

			From \cref{SMEI}, \cref{EMI}, and \cref{EETI}, we have
			\begin{align}
				\label{BdI3}
				\prod_{1 \leq i \leq \I}\sum_{0 \leq n \leq B_{i}}\frac{(2k)^{n}}{n!}\EXP{G_{(i, \I)}(\mathfrak{X})^{n}}
				\ll e^{-k^{2}(\log{\log{\k}} + O(1))} (\log{T^{\delta_{\I}}})^{k^{2}}.
			\end{align}
			This implicit constant is absolute.
			Combining \cref{BdI2} with \cref{SMEI}, \cref{EMI}, and \cref{BdI3}, we complete the proof of \cref{lem_T}.
		\end{proof}

		\begin{proof}[Proof of \cref{lem_Sj}]
			First we evaluate $\meas \Sc(0)$.
			We have
			\begin{align}
				\meas \Sc(0)
				&\leq \sum_{1 \leq \ell \leq \I}\meas \mathcal{A}(1, \ell)^{c} + \meas \mathcal{B}(1)^{c}\\
				&= \sum_{1 \leq \ell \leq \I}\meas \mathcal{A}(1, \ell)^{c} + \meas([T, 2T] \setminus \mathcal{E}_{K}(T^{\delta_{1}}, T)).
			\end{align}
			By \cref{SLL} and the definition of $\mathcal{A}(1, \ell)$,
			we find for $N \in \ZZ_{\geq 1}$ with $N \leq \frac{\log{T}}{\log{\log{T}}}$ that
			\begin{align}
				&\meas\mathcal{A}(1, \ell)^{c}
				\leq \int_{T}^{2T}\l( 20 e^{2} \k \ell^{2} \delta_{1} G_{(1, \ell)}(t) \r)^{2N}dt\\
				&\leq (20 e^{2} \k \I^{2} \delta_{1})^{2N} \int_{T}^{2T}
				\bigg| \sum_{p \leq T^{\log{\log{T}} / \log{T}}} \frac{\phi_{j}(p)}{p^{1/2 + \i t}} + \frac{\psi_{j}(p)}{2p^{1 + 2 \i t}} \bigg|^{2N}dt\\
				&\ll T (25600 e^{4} \k^{2} \I^{4} \delta_{1}^{2} N \log{\log{\log{T}}})^{N}
				= T \l(\frac{25600 e^{4} \k^{2} \I^{4} N (\log{\log{T}})^{2} \log{\log{\log{T}}}}{(\log{T})^{2}}\r)^{N}.
			\end{align}
			If $\I = 1$, taking $N = \GS{\log{T} / \log{\log{T}}}$, we have
			\begin{align}
				\sum_{1 \leq \ell \leq \I}\meas\mathcal{A}(1, \ell)^{c}
				= \meas\mathcal{A}(1, 1)^{c}
				\ll T \l(\frac{25600 e^{4} \k^{2} (\log{\log{T}})^{2}}{\log{T}}\r)^{N}
				\leq T\exp\l(-\frac{\log{T}}{\log{\log{T}}}\r)
			\end{align}
			for $T \geq e^{(1200 k \log{\k})^{2}} T_{0}$.
			If $\I \geq 2$, we see from the definitions of $\delta_{j}$ and $\I$ that $T \geq \exp(e^{80 A(h, \theta) \k K})$,
			so we have
			\begin{align}
				\sum_{1 \leq \ell \leq \I}\meas\mathcal{A}(1, \ell)^{c}
				&\ll \I \cdot T \l(\frac{25600 e^{2} \k^{2} \I^{4} N (\log{\log{T}})^{2} \log{\log{\log{T}}}}{(\log{T})^{2}}\r)^{N}
				\leq T \l(\frac{N}{(\log{T})^{4/3}}\r)^{N}
			\end{align}
			for any large $T$ since $\I \leq L = (\log{T} / \log{\log{T}})^{1/8}$.
			Taking $N = \GS{\log{T} / \log{\log{T}}}$, we find that the last expression is $T \exp\l( - \frac{\log{T}}{\log{\log{T}}} \r)$.
			Combining this estimate and \cref{EXDZ}, we have $\meas\Sc(0) \ll T (\Phi(T) + 1) \exp\l(- \frac{\log{T}}{\log{\log{T}}}\r)$
			for $T \geq e^{(1200 k \log{\k})^{2}} T_{1}(K)$.

			Next, we prove \cref{lem_Sj_1}.
			Then we may assume that $\I \geq 2$.
			Write $\Sc(j) = \Sc_{1}(j) \cup \Sc_{2}(j)$ with
			\begin{align}
				\Sc_{1}(j)
				&\ceq \bigcap_{1 \leq i \leq j}\mathcal{A} \cap \mathcal{B}(j) \cap \bigcup_{j + 1 \leq \ell \leq \I} \mathcal{A}(j + 1, \ell)^{c},\\
				\Sc_{2}(j)
				&\ceq \bigcap_{1 \leq i \leq j}\mathcal{A} \cap \mathcal{B}(j) \cap \mathcal{B}(j + 1)^{c}.
			\end{align}
			It then follows that
			\begin{align}
				&\int_{\mathcal{S}(j)}\exp\l( 2k \sum_{p \leq T^{\delta_{j}}}a_{j}(p, t) \r)dt\\
				&\leq \l(\int_{\Sc_{1}(j)} + \int_{\Sc_{2}(j)}\r)
				\exp\l( 2k \sum_{p \leq T^{\delta_{j}}}a_{j}(p, t) \r)dt
				\eqc I_{1}(j) + I_{2}(j),
			\end{align}
			say.
			Put $B_{i} = 5^{-1}(j + 1 - i)^{-2} \delta_{i}^{-1}$ and $M = \lfloor (4 \delta_{j + 1})^{-1} \rfloor$.
			As with \cref{INE:EXTR}, we find for $t \in \mathcal{S}_{1}(j)$ that
			\begin{align}
				& \exp\l( 2k \sum_{p \leq T^{\delta_{j}}}a_{j}(p, t) \r)
				= \prod_{1 \leq i \leq j}\exp\l( 2k G_{(i, j)}(t) \r)\\
				& \leq \l( 20 e^{2} \k (\ell - j)^{2} \delta_{j + 1} G_{(j+1, \ell)}(t) \r)^{2 M} \prod_{1 \leq i \leq j}\exp\l( 2k G_{(i, j)}(t) \r)\\
				\label{BD_I1j1}
				& \leq C \l( \delta_{j + 1}^{3/5} G_{(j+1, \ell)}(t) \r)^{2 M}
				\prod_{1 \leq i \leq j}\l(\sum_{0 \leq n \leq B_{i}}\frac{(k G_{(i, j)}(t))^{n}}{n!}\r)^{2}
			\end{align}
			for some $j+1 \leq \ell \leq \I $ with some absolute constant $C > 0$.
			Therefore, we have
			\begin{align}
				I_{1}(j)
				\ll e^{- \frac{6}{5}M \log(1 / \delta_{j + 1})} \sum_{\ell = j + 1}^{\I}\int_{T}^{2T}G_{(j+1, \ell)}(t)^{2 M} \prod_{1 \leq i \leq j}
				\l(\sum_{0 \leq n \leq B_{i}}\frac{(k G_{(i, j)}(t))^{n}}{n!}\r)^{2}dt,
			\end{align}
			where $I_{1}(j)$ is the integral we evaluate.
			As with \cref{BdI2}, we can obtain
			\begin{align}
				I_{1}(j)
				& \ll e^{- \frac{6}{5} M \log(1 / \delta_{j + 1})} T \sum_{\ell = j + 1}^{\I}\EXP{G_{(j + 1, \ell)}(\mathfrak{X})^{2 M}}
				\prod_{1 \leq i \leq j}\sum_{0 \leq m \leq B_{i}}\sum_{0 \leq n \leq B_{i}}\frac{k^{m + n}}{m! n!}\EXP{G_{(i, j)}(\mathfrak{X})^{m + n}}\\
				\label{BdI1j2}
				&\qquad + e^{2k j}T^{1/2},
			\end{align}
			and further
			\begin{align}
				\label{BdI1j3}
				\prod_{1 \leq i \leq j}\sum_{0 \leq m \leq B_{i}}\sum_{0 \leq n \leq B_{i}}\frac{k^{m + n}}{m! n!}\EXP{G_{(i, j)}(\mathfrak{X})^{m + n}}
				\ll e^{-k^{2}(\log{\log{\k}} + O(1))} (\log{T^{\delta_{j}}})^{k^{2}}
			\end{align}
			similar to \cref{BdI3}.
			We can also prove that
			\begin{align}
				\sum_{\ell = j + 1}^{\I}\EXP{G_{(j + 1, \ell)}(\mathfrak{X})^{2 M}}
				&\leq (\I - j)\sqrt{(2M)!}\l(\frac{60}{(L - j)^{1/2}}\r)^{2M}\\
				&\leq (2 M)^{M} \frac{3600 (\I - j)}{L - j}
				\ll e^{M \log{(1 / \delta_{j + 1})}}
			\end{align}
			similar to \cref{nMG1}.
			Applying this estimate and \cref{BdI1j3} to \cref{BdI1j2} and using the inequality $\delta_{j+1} \leq (1 + 100^{-1})\delta_{j}$, we have
			\begin{align}
				I_{1}(j)
				&\ll e^{- \frac{1}{5} M \log(1 / \delta_{j + 1})}
				e^{-k^{2}(\log{\log{\k}} + O(1))} T (\log{T^{\delta_{j}}})^{k^{2}}\\
				\label{ES_I1j}
				&\ll e^{- \frac{1}{21} \delta_{j}^{-1} \log(1 / \delta_{j + 1})}
				e^{-k^{2}(\log{\log{\k}} + O(1))} T (\log{T^{\delta_{j}}})^{k^{2}}.
			\end{align}

			Next, we consider $I_{2}(j)$.
			It follows from the definition of $\Sc_{2}(j)$ and H\"older's inequality that
			\begin{multline}
				I_{2}(j)
				\leq \l( \meas\mathcal{B}(j + 1)^{c} \r)^{1/a}\\
				\times \l( \int_{\bigcap_{1 \leq i \leq j}\mathcal{A}(i, j) \cap \mathcal{B}(j)}
				\exp\l( 2k b \Re \sum_{p \leq T^{\delta_{j}}}\l(\frac{\phi_{j}(p)}{p^{1/2}}p^{-\i t} + \frac{\psi_{j}(p)}{2 p}p^{-2 \i t} \r) \r)dt \r)^{1/b}
			\end{multline}
			for $a, b > 1$ with $\frac{1}{a} + \frac{1}{b} = 1$.
			By the fact $\mathcal{B}(j + 1) = \mathcal{E}_{K}(T^{\delta_{j + 1}}, T)$ and \cref{EXDZ},
			we have $\meas\mathcal{B}(j + 1)^{c} \ll T K e^{K} \frac{\Phi(T)}{\delta_{j + 1} \log{T}} \exp\l( - \lam K \delta_{j + 1}^{-1} \r)
				\leq T K e^{K} \frac{\Phi(T)}{\log{T}} \exp\l( - (\lam K - 1) \delta_{j + 1}^{-1} \r)$.
			As with \eqref{BdI2} and \eqref{BdI3}, we can prove that
			\begin{align}
				&\l(\int_{\bigcap_{1 \leq i \leq j}\mathcal{A}(i, j) \cap \mathcal{B}(j)}
				\exp\l( 2k b \Re \sum_{p \leq T^{\delta_{j}}}\l(\frac{\phi_{j}(p)}{p^{1/2}}p^{-\i t}
				+ \frac{\psi_{j}(p)}{2 p}p^{-2 \i t} \r) \r)dt\r)^{1/b}\\
				&\ll e^{-k^{2} b (\log{\log{(\k b)}} + O(1))} T^{1/b} (\log{T^{\delta_{j}}})^{k^{2} b}.
			\end{align}
			Hence, we have
			\begin{align}
				I_{2}(j)
				\ll \l( K e^{K}\frac{\Phi(T)}{\log{T}} \r)^{1/a} e^{- (\lam K - 1)\delta_{j + 1}^{-1} / a}
				e^{-k^{2} b (\log{\log{(\k b)}} + O(1))} T (\log{T^{\delta_{j}}})^{k^{2} b}.
			\end{align}
			Combing \cref{ES_I1j} with this, we complete the proof.
		\end{proof}

\section{\textbf{Proof of \cref{MTUBMZ}}}
	Let $0 < \e \leq \frac{1}{100}$, and let $K = \e^{-2} K_{0}$ with $K_{0}$ a large constant.
	We divide the integral as
	\begin{align}
		&\int_{T}^{2T}\exp\l( 2k \Re e^{-\i \theta} \log{\zeta(\tfrac{1}{2} + \i t)} \r)dt\\
		&\leq \int_{\mathcal{T}}\exp\l( 2k \Re e^{-\i \theta} \log{\zeta(\tfrac{1}{2} + \i t)} \r)dt
		+ \sum_{j = 0}^{\mathcal{I} - 1}\int_{\Sc(j)}\exp\l( 2k \Re e^{-\i \theta} \log{\zeta(\tfrac{1}{2} + \i t)} \r)dt.
	\end{align}
	When $t \in \mathcal{T}$, it follows from \cref{AFLZ} that
	\begin{align}
		&\Re e^{-\i \theta} \log{\zeta(\tfrac{1}{2} + \i t)}
		\leq \sum_{p \leq T^{\delta_{\I}}}a_{\I}(p, t) + \frac{K + 30}{2 A(h, \theta)} \delta_{\I}^{-1} + O(\e^{-2})
	\end{align}
	since $\frac{2}{5} < h < \frac{3}{4}$.
	By this inequality and \cref{lem_T}, we obtain
	\begin{align}
		\int_{\mathcal{T}}\exp\l( 2k \Re e^{-\i \theta} \log{\zeta(\tfrac{1}{2} + \i t)} \r)dt
		\leq \exp(C \e^{-2} \exp(C \e^{-2} \k)) T (\log T)^{k^{2}}
	\end{align}
	for some absolute constant $C > 0$.
	Similarly, we can obtain, by \cref{lem_Sj_1},
	\begin{align}
		&\int_{\Sc(j)}\exp\l( 2k \Re e^{-\i \theta} \log{\zeta(\tfrac{1}{2} + \i t)} \r)dt\\
		&\ll e^{-(\frac{1}{21}\log{(1 / \delta_{j + 1})} - k K / A(h, \theta) - \e^{2})\delta_{j}^{-1}} T (\log{T})^{k^{2}}\\
		&\qquad + \l( K e^{K} \frac{\Phi(T)}{\log{T}} \r)^{1/a}
		e^{- (\lam K - 1)\delta_{j + 1}^{-1}/a + k (K + 30) \delta_{j}^{-1} / A(h, \theta)} T (\log{T})^{k^{2} b}
	\end{align}
	for $1 \leq j \leq \I - 1$ and $a, b > 1$ with $\frac{1}{a} + \frac{1}{b} = 1$.
	Since $K = \e^{-2}$, we have $\delta_{j + 1}^{-1} = (1 + O(\e^{2}))\delta_{j}^{-1}$ by the definition of $\delta_{j}$.
	Taking $a = A(h, \theta) \lam / (k + \e)$, we find that $b = 1 + \frac{k + \e}{A(h, \theta)\lam - (k + \e)}$, and that
	\begin{align}
		&\sum_{j = 1}^{\I - 1}\int_{\Sc(j)}\exp\l( 2k \Re e^{-\i \theta} \log{\zeta(\tfrac{1}{2} + \i t)} \r)dt\\
		&\ll_{\e} e^{-e^{k}} T (\log{T})^{k^{2}} + T (\log{T})^{k^{2}(1 + \frac{k + \e}{A(h, \theta)\lam  - (k + \e)})}
		(\Phi(T) / \log{T})^{\frac{k + \e}{A(h, \theta) \lam}}
	\end{align}
	for $0 \leq k < A(h, \theta) \lam - \e$.

	Finally, we consider the integral on $\Sc(0)$.
	It follows from H\"older's inequality that
	\begin{align}
		\label{pMTUBMZ1}
		&\int_{\Sc(0)}\exp\l( 2k \Re e^{-\i \theta} \log{\zeta(\tfrac{1}{2} + \i t)} \r)dt\\
		&\leq \l( \meas\Sc(0) \r)^{\frac{\e}{1+\e}}
		\times \l( \int_{T}^{2T}\exp\l( 2k(1+\e) \Re e^{-\i \theta} \log{\zeta(\tfrac{1}{2} + \i t)} \r)dt \r)^{\frac{1}{1+\e}}.
	\end{align}
	We see that
	\begin{align}
		&\int_{T}^{2T}\exp\l( 2k (1 + \e) \Re e^{-\i \theta} \log{\zeta(\tfrac{1}{2} + \i t)} \r)dt\\
		&= 2 k (1 + \e) T\int_{-\infty}^{\infty}e^{2k (1 + \e) V}\PP_{T}(\Re e^{-\i \theta} \log{\zeta(\tfrac{1}{2} + \i t)} > V)dV.
	\end{align}
	We find, using \cref{UBDRZ}, that
	\begin{align}
		&\int_{-\infty}^{\infty}e^{2k (1 + \e) V}\PP_{T}(\Re e^{-\i \theta} \log{\zeta(\tfrac{1}{2} + \i t)} > V)dV\\
		&\ll \int_{-\infty}^{\infty}e^{2k (1 + \e) V}\exp\l(- \frac{\e^{4} V^{2}}{4 e \log{\log{T}}} \r) dV
		+ \int_{-\infty}^{\infty}e^{2k (1 + \e) V} \exp\l( - \e^{2} A(h, \theta) V \log{V} \r)dV\\
		&\qquad+ \int_{-\infty}^{\infty}e^{2k (1 + \e) V} e^{K}\frac{\Phi(T)}{\log{T}}\exp\l( - (1 - \e^{2}) 2 A(h, \theta) \lam V \r)dV.
	\end{align}
	A simple calculation shows that the second term is $\leq e^{O(\e^{-2} k)}$, and that the third term is $\leq e^{O(\e^{-2})}\Phi(T) / \log{T}$
	when $k \leq A(h, \theta) \lam - \e$. Also, the first term is
	\begin{align}
		\leq \int_{-\infty}^{\infty}e^{4kV}\exp\l(- \frac{\e^{4} V^{2}}{4 e \log{\log{T}}} \r)dV
		&= (\log{T})^{16 e \e^{-4} k^{2}} \int_{-\infty}^{\infty}\exp\l(- \frac{\e^{4} V^{2}}{4 e \log{\log{T}}} \r)dV\\
		&\ll (\log{T})^{16 e \e^{-4} k^{2}} \e^{-2} \sqrt{\log{\log{T}}}.
	\end{align}
	Applying these estimates and \cref{lem_Sj} to \cref{pMTUBMZ1}, we have
	\begin{align}
		&\int_{\Sc(0)}\exp\l( 2k \Re e^{-\i \theta} \log{\zeta(\tfrac{1}{2} + \i t)} \r)dt\\
		&\ll T (\Phi(T) + 1)^{\frac{\e}{1 + \e}}\exp\l(-\frac{\e}{1 + \e}\frac{\log{T}}{\log{\log{T}}}\r)\\
		&\qquadf\times \l( e^{O(\e^{-2})} \frac{\Phi(T)}{\log{T}} + \e^{-2}(\log{T})^{16 e \e^{-4} k^{2}} \sqrt{\log{\log{T}}} \r)^{\frac{1}{1 + \e}}\\
		&\leq T \l(\frac{\Phi(T)}{\log{T}} + 1\r)
	\end{align}
	when $T \geq e^{C (k \log(k + 3))^{2}} T_{0}(\e)$ with $C$ some large constant depending only on $\e$.
	This completes the proof of \cref{MTUBMZ}.
	\qed

	\begin{acknowledgment*}
		The author was supported by Grant-in-Aid for JSPS Fellows (Grant Number 21J00425).
	\end{acknowledgment*}

\end{document}